\theoremstyle{plain}
 \newtheorem{thm}{Theorem}[section]
 \newtheorem{lem}[thm]{Lemma}
\theoremstyle{definition}
\theoremstyle{rem}
 \newtheorem{rem}[thm]{Remark}
 \numberwithin{equation}{section}
\def\tht{\theta}
\def\Om{\Omega}
\def\e{\varepsilon}
\def\g{\gamma}
\def\G{\Gamma}
\def\l{\lambda}
\def\p{\partial}
\def\D{\Delta}
\def\k{\varkappa}
\def\E{\mbox{\rm e}}
\def\a{\alpha}
\def\b{\beta}
\def\d{\delta}
\def\L{\Lambda}
\def\z{\zeta}
\def\vs{\varsigma}
\def\r{\rho}
\def\Odr{\mathcal{O}}
\def\H{W_2}
\def\Ho{\mathring{W}_2}
\def\Hoper{\mathring{W}_{2,per}}
\def\di{\,\mathrm{d}}
\def\I{\mathrm{I}}
\def\iu{\mathrm{i}}
\def\Hpe{\mathring{\mathcal{H}}_\e}
\def\hpe{\mathring{\mathfrak{h}}_\e}
\def\Gp{\mathring{\G}}
\def\gp{\mathring{\g}}
\def\po{\mathring{\psi}}
\def\Po{\mathring{\Psi}}
\def\Pho{\mathring{\Phi}}
\def\pho{\mathring{\phi}}
 \DeclareMathOperator{\RE}{Re}
 \DeclareMathOperator{\spec}{\sigma}
\DeclareMathOperator{\essspec}{\sigma_{e}}
\DeclareMathOperator{\sgn}{sgn}
\begin{document}
\allowdisplaybreaks

\begin{center}

\Large{\textbf{On a waveguide with frequently alternating boundary conditions:
homogenized Neumann condition}}

\bigskip

\large{Denis Borisov$^1$, Renata Bunoiu$^2$, and Giuseppe Cardone$^3$}

\end{center}

\begin{quote}
{\small {\em 1) Bashkir State Pedagogical University, October
Revolution St.~3a,
\\
\phantom{1) } 450000 Ufa, Russia, e-mail: \texttt{borisovdi@yandex.ru}
\\
2) LMAM, UMR 7122, Universit\'e de Metz et CNRS Ile du Saulcy,
\\
\phantom{1) } F-57045 METZ Cedex 1, France, e-mail: \texttt{bunoiu@math.univ-metz.fr}
\\
3) University of Sannio, Department of Engineering, Corso Garibaldi, 107,
\\
\phantom{1) } 82100 Benevento, Italy, e-mail:
\texttt{giuseppe.cardone@unisannio.it}}
}
\end{quote}


\def\thefootnote{}

\footnote{This work was partially done during the visit of D.B. to the University of
Sannio (Italy) and of G.C. to LMAM of University Paul Verlaine of Metz (France).
They are grateful for the warm hospitality extended to them.
D.B. was partially supported by RFBR (09-01-00530),
by the grants of the President of Russia for young
scientists-doctors of sciences (MD-453.2010.1) and for Leading
Scientific School (NSh-6249.2010.1),  by Federal Task Program
``Research and educational professional community of innovation
Russia'' (contract 02.740.11.0612), and by the project ``Progetto ISA: Attivit\`a di
Internazionalizzazione dell'Universit\`a degli Studi del Sannio''.}

\date{}

\begin{abstract}
We consider a waveguide modeled by the Laplacian in a straight
planar strip. The Dirichlet boundary condition is taken on the upper
boundary, while on the lower boundary we impose periodically
alternating Dirichlet and Neumann condition assuming the period of
alternation to be small. We study the case when the homogenization
gives the Neumann condition instead of the alternating ones. We
establish the uniform resolvent convergence and the estimates for
the rate of convergence. It is shown that the rate of the
convergence can be improved by employing a special boundary
corrector. Other results are the uniform resolvent convergence for
the operator on the cell of periodicity obtained by the
Floquet-Bloch decomposition, the two-terms asymptotics for the band
functions, and the complete asymptotic expansion for the bottom of
the spectrum with an exponentially small error term.
\end{abstract}

\section{Introduction}

During last decades, models of quantum waveguides attracted much attention by both physicists and mathematicians. It was motivated by many interesting mathematical phenomena of these models and also by the progress in the semiconductor physics, where they have important applications. Much efforts were exerted to study influence of various perturbations on the spectral properties of the waveguides. One of such perturbations is a finite number of openings coupling two lateral waveguides (see, for instance, \cite{JPA-B}, \cite{B-MS06}, \cite{BEG-02}, \cite{BGRS}, \cite{DK}, \cite{ESTV}, \cite{EV2}). Such openings are usually called ``windows''. If the coupled waveguides are symmetric, one can replace them by a single waveguide with the opening(s) modeled by the change of boundary condition (see \cite{BEG-02}, \cite{BGRS}, \cite{DK}). The main phenomenon studied in \cite{JPA-B}, \cite{B-MS06}, \cite{BEG-02}, \cite{BGRS}, \cite{DK}, \cite{ESTV}, \cite{EV2} is the appearance of new eigenvalues below the essential spectrum, which is stable w.r.t.
windows.

A close model was suggested in \cite{BC}, where the number of openings was infinite. The waveguide was modeled by a straight planar strip, where the Dirichlet Laplacian was considered. On the upper boundary the Dirichlet condition was imposed. On the lower boundary the Neumann condition was settled on a periodic set, while on the remaining part of the boundary the Dirichlet condition is involved. In other words, on the lower boundary one had the alternating boundary conditions. The main assumption was the smallness of the sizes of Dirichlet and Neumann parts on the lower  boundary. They 
were described by two parameters: the first one, $\e$, was supposed to be small, while the other, $\eta=\eta(\e)$, could be either bounded or small.

The main difference between the models studied in \cite{BC} and in \cite{JPA-B}, \cite{B-MS06}, \cite{BEG-02}, \cite{BGRS}, \cite{DK},
\cite{ESTV}, \cite{EV2} is the influence of the perturbation on the
spectral properties: while in the latter papers the essential
spectrum remained unchanged and discrete eigenvalues appeared below
its bottom, in \cite{BC} the spectrum was purely essential and had
band structure. Moreover, it depended on the perturbation and, for
example, the bottom of the spectrum moved as $\e\to+0$. Assuming
that
\begin{equation}\label{0.1}
\e\ln\eta(\e)\to-0 \quad\text{ as }\e\to+0,
\end{equation}
it was shown in \cite{BC} that the homogenized operator is the
Laplacian with the previous boundary condition on the upper
boundary, while the alternation on the lower boundary should be
replaced by the Dirichlet one. More precisely, it was shown that the
uniform resolvent convergence for the perturbed operator holds true
and the rate of convergence was estimated. Other main results were
the two-terms asymptotics for first band functions of the perturbed
operator and the complete two-parametric asymptotic expansion for
the bottom of the spectrum.

In the present paper we consider a different case: we assume that
the homogenized operator has the Neumann condition on the lower
boundary, which is guaranteed by the condition
\begin{equation}\label{1.5}
\e\ln\eta(\e)\to -\infty \quad \text{ as }\e\to+0.
\end{equation}
We observe that this condition is not new, and it was known before
that it implied the homogenized Neumann boundary condition for the
similar problems in bounded domains, see \cite{Gzh01}, \cite{Ch},
\cite{Ch2}, \cite{ChD1}, \cite{ChD2}, \cite{Fr}.

We obtain
the uniform resolvent convergence for the perturbed operator and we
estimate the rate of convergence. We also obtain similar
convergence for the operator appearing on the cell of periodicity
after Floquet decomposition and provide two-terms asymptotics for
the first band function. The last main result is the complete
asymptotic expansion for the bottom of the spectrum.

 Similar results were obtained \cite{BC} under the assumption
(\ref{0.1}), and now we want to underline the main differences. We
first observe that in \cite{BC} the estimate of the rate of
convergence for the perturbed resolvent was obtained for the
difference of the resolvents of the perturbed and homogenized
operator and this difference was considered as an operator from
$L_2$ into $\H^1$. In our case, in order to have a similar good
estimate, we have to consider the difference not with the resolvent
of the homogenized operator, but with that of an additional operator
depending in boundary condition on an additional parameter
\begin{equation}\label{1.6}
\mu=\mu(\e):=-\frac{1}{\e\ln\eta(\e)}\to+0 \quad \text{ as }\e\to+0.
\end{equation}
Moreover, we also have to use a special boundary \emph{corrector}, see Theorem~\ref{th1.1}. Omitting the corrector and estimating the difference of the same resolvents as an operator in $L_2$, we can still preserve the mentioned good estimate. Omitting the corrector or replacing the additional operator mentioned above by the homogenized one, one worsens the rate of convergence. At the same time, this rate can be improved partially by considering the difference of the resolvents as an operator in $L_2$. Such situation was known to happen in the case of the operators with the fast oscillating coefficients (see \cite{BS2}, \cite{BS5}, \cite{BoAA08}, \cite{PT}, \cite{Pas}, \cite{Su2}, \cite{Su1}, \cite{SuKh}, \cite{Zh3}, \cite{Zh4} and the references therein for further results). From this point of view the results of the present paper are closer to the cited paper in contrast to the results of \cite{BC} and \cite[Ch. I\!I\!I, Sec. 4.1]{OIS}.

One more difference to \cite{BC} is the asymptotics for the band functions and the bottom of the essential spectrum. The second term in the asymptotics for the band functions is not a constant, but a holomorphic in $\mu$ function. In fact, it is a series in $\mu$ and this is why the mentioned two-terms asymptotics can be regarded as the asymptotics with more terms, see (\ref{1.15}). Even more interesting situation occurs in the asymptotics for the bottom of the spectrum. Here the asymptotics contains just one first term, but the error estimate is \emph{exponential}. The leading term depends on $\e$ and $\mu$ \emph{holomorphically} and can be represented as the series in $\e$ with the holomorphic in $\mu$ coefficients. For the bounded domains the complete asymptotic expansions for the eigenvalues in the case of the homogenized Neumann problem were constructed in \cite{AsAn}, \cite{GDu99}. These asymptotics were power in $\e$ \cite{GDu99} with the holomorphic in $\mu$ coefficients \cite{AsAn}. At the same time, the error terms were powers in $\e$ and the convergence of these asymptotic series was not proved. In our case the first term in the asymptotics for the bottom of the essential spectrum is the sum of the asymptotic series analogous to those in \cite{AsAn}, \cite{GDu99}. In other words, we succeeded to prove that in our case this series converges, is holomorphic in $\e$ and $\mu$ and gives the exponentially small error term that for singularly perturbed problems in homogenization is regarded as a strong result.

Eventually, we point out that the technique we use is different: in addition to the boundary layer method \cite{VL} used also in \cite{BC}, here we also have to employ the method of matching of the asymptotic expansions \cite{Il}. Such combination was borrowed from \cite{AsAn}, \cite{GD98}, \cite{Gzh01}, \cite{GDu99}. We use this combination to construct the aforementioned corrector to obtain the uniform resolvent convergence. Similar correctors were also constructed in \cite{Ch}, \cite{Fr}, \cite{Gzh01}, but to obtain either weak or strong resolvent convergence. We also employ the same corrector in the combination of the technique developed in \cite{FS} for the analysis of the uniform resolvent convergence for thin domains.

In conclusion, we describe briefly the structure of
the paper. In the next section we formulate precisely the problem
and give the main results. The third section is devoted to the
study of the uniform resolvent convergence. In the fourth section we
make the similar study for the operator appearing after the Floquet
decomposition, and we also establish two-terms asymptotics for the
first band functions. In the last, fifth section we construct the complete
asymptotic expansion for the bottom of the spectrum.

\section{Formulation of the problem and the main results}

Let $x=(x_1,x_2)$ be Cartesian coordinates in $\mathds{R}^2$, and
$\Om:=\{x: 0<x_2<\pi\}$ be a straight strip of width $\pi$. By $\e$
we denote a small positive parameter, and $\eta=\eta(\e)$ is a
function satisfying the estimate
\begin{equation*}
0<\eta(\e)<\frac{\pi}{2}.
\end{equation*}
We indicate by $\G_+$ and $\G_-$ the upper and lower boundary of
$\Om$, and we partition $\G_-$ into two subsets (cf.
fig.~\ref{fig1}),
\begin{equation*}
\g_\e:=\{x: |x_1-\e\pi j|<\e\eta,\, x_2=0,\, j\in \mathds{Z}\},
\quad \G_\e:=\G_-\setminus\overline{\g_\e}.
\end{equation*}

The main object of our study is the Laplacian in $L_2(\Om)$ subject
to the Dirichlet boundary condition on $\G_+\cup\g_\e$ and to the
Neumann one on $\G_\e$. We introduce this operator as the
non-negative self-adjoint one in $L_2(\Om)$ associated with the
sesquilinear form
\begin{equation*}
\mathfrak{h}_\e[u,v]:=(\nabla u,\nabla v)_{L_2(\Om)}\quad
\text{on}\quad \Ho^1(\Om,\G_+\cup\g_\e),
\end{equation*}
where $\Ho^1(Q,S)$ indicates the subset of the functions in
$\H^1(Q)$ having zero trace on the curve $S$. We denote the
described operator as $\mathcal{H}_\e$. The aim of this paper is to
study the asymptotic behavior of the resolvent and the spectrum of
$\mathcal{H}_\e$ as $\e\to+0$.

Let $\mathcal{H}^{(\mu)}$ be the non-negative self-adjoint operator
in $L_2(\Om)$ associated with the sesquilinear form
\begin{equation*}
\mathfrak{h}^{(\mu)}[u,v]:=(\nabla u,\nabla
v)_{L_2(\Om)}+\mu(u,v)_{L_2(\p\Om)}\quad \text{on}\quad
\Ho^1(\Om,\G_+),
\end{equation*}
where $\mu\geqslant 0$ is a constant. Reproducing the arguments of
\cite[Sec. 3]{IEOP}, one can show that the domain of
$\mathcal{H}^{(\mu)}$ consists of the functions in $\H^2(\Om)$
satisfying the boundary condition
\begin{equation}\label{1.4}
\frac{\p u}{\p x_2}-\mu u=0\quad \text{on} \quad \G_-,\qquad u=0
\quad \text{on} \quad \G_+,
\end{equation}
and
\begin{equation}\label{1.11}
\mathcal{H}^{(\mu)}u=-\D u.
\end{equation}
By $\|\cdot\|_{L_2(\Om)\to L_2(\Om)}$ and
$\|\cdot\|_{L_2(\Om)\to\H^1(\Om)} $ we denote the norm of an
operator acting from $L_2(\Om)$ into $L_2(\Om)$ and into
$\H^1(\Om)$, respectively.

Our first main result describes the uniform resolvent convergence
for $\mathcal{H}_\e$.

\begin{figure}[t]
\begin{center}
\includegraphics[width=11.7 true cm, height=2.416 true cm]{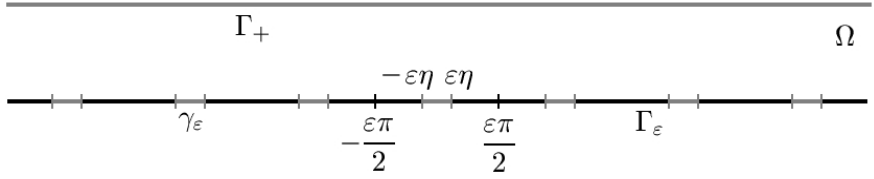}
\caption{Waveguide with frequently alternating boundary conditions}
\label{fig1}
\end{center}
\end{figure}

\begin{thm}\label{th1.1}
Suppose (\ref{1.5}). Then
\begin{align}
&\|(\mathcal{H}_\e-\iu)^{-1}-(\mathcal{H}^{(\mu)}-\iu)^{-1}
\|_{L_2(\Om)\to L_2(\Om)}\leqslant C \e\mu|\ln\e\mu|,\label{1.10}
\\
& \|(\mathcal{H}_\e-\iu)^{-1}-(\mathcal{H}^{(0)}-\iu)^{-1}
\|_{L_2(\Om)\to\H^1(\Om)}\leqslant C \mu^{1/2}, \label{1.8}
\\
& \|(\mathcal{H}_\e-\iu)^{-1}-(\mathcal{H}^{(0)}-\iu)^{-1}
\|_{L_2(\Om)\to L_2(\Om)}\leqslant C \mu, \label{1.8a}
\end{align}
where the constants $C$ are independent of $\e$ and $\mu$, 
and $\mu=\mu(\e)$ was defined in (\ref{1.6}). There
exists a corrector $W=W(x,\e,\mu)$ defined explicitly by
(\ref{2.19}) such that
\begin{equation}\label{1.9}
\|(\mathcal{H}_\e-\iu)^{-1}-(1+W)(\mathcal{H}^{(\mu)}-\iu)^{-1}
\|_{L_2(\Om)\to\H^1(\Om)}\leqslant C \e\mu|\ln\e\mu|,
\end{equation}
where the constant $C$ is independent of $\e$ and $\mu$.
\end{thm}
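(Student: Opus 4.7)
My plan is to prove the corrector estimate \eqref{1.9} first and derive the other three bounds from it. For $f\in L_2(\Om)$, set $u^{(\mu)}:=(\mathcal{H}^{(\mu)}-\iu)^{-1}f$ and $u_\e:=(\mathcal{H}_\e-\iu)^{-1}f$. The idea is to build a corrector $W(x;\e,\mu)$, supported in an $O(\e)$-neighbourhood of $\G_-$, so that $(1+W)u^{(\mu)}\equiv 0$ on $\g_\e$ and therefore belongs to $\Ho^1(\Om,\G_+\cup\g_\e)$, the form domain of $\mathcal{H}_\e$. One then checks that $(1+W)u^{(\mu)}$ is an asymptotic solution of the perturbed resolvent equation, with an error of order $\e\mu|\ln\e\mu|$ in the dual of $\Ho^1(\Om,\G_+\cup\g_\e)$, and closes the estimate by the uniform Lax--Milgram bound for $\mathfrak{h}_\e-\iu$.

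The boundary layer is constructed by matched asymptotic expansions in the spirit of \cite{Il}. In the stretched variables $\xi_1=(x_1-\e\pi j)/\e$, $\xi_2=x_2/\e$ associated with the $j$-th Dirichlet patch, I would solve for a model harmonic function $X=X(\xi;\mu)$ on the half-cell $\{|\xi_1|<\pi/2,\xi_2>0\}$ that is equal to $-1$ on the scaled patch $\{|\xi_1|<\eta,\xi_2=0\}$, Neumann on its complement and on the lateral sides, and tends to some constant $c_\infty(\mu)$ as $\xi_2\to+\infty$. Solvability of such a mixed Dirichlet--Neumann problem is classical; the decisive point is its intermediate asymptotics $X(\xi;\mu)=c_0(\mu)\ln|\xi|+c_1(\mu)+o(1)$ on the scale $\eta\ll|\xi|\ll 1$, with $c_0=(\ln\eta)^{-1}$ the only value making the Dirichlet datum $-1$ and the boundedness at infinity compatible. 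Matching this logarithmic behaviour with the outer expansion of $u^{(\mu)}$ near $\G_-$ converts the effective boundary condition into the Robin condition $\p u/\p x_2=\mu u$ with exactly $\mu=-(\e\ln\eta)^{-1}$, i.e.\ \eqref{1.6}. The corrector $W$ promised in \eqref{2.19} is then defined as the periodic superposition of $X((x-\e\pi j)/\e;\mu)-c_\infty(\mu)$, each term multiplied by a smooth cutoff supported in the corresponding cell.

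The main technical step is to plug $(1+W)u^{(\mu)}$ into the variational identity defining $(\mathcal{H}_\e-\iu)^{-1}$. Using that $u^{(\mu)}\in\H^2(\Om)$ solves $-\D u^{(\mu)}-\iu u^{(\mu)}=f$, together with \eqref{1.4} and the defining problem for $X$, I would test $\mathfrak{h}_\e[(1+W)u^{(\mu)},\vp]-\iu((1+W)u^{(\mu)},\vp)_{L_2(\Om)}-(f,\vp)_{L_2(\Om)}$ against $\vp\in\Ho^1(\Om,\G_+\cup\g_\e)$. All leading bulk terms cancel; the residue splits into a commutator term $-(u^{(\mu)}\D W+2\nabla W\cdot\nabla u^{(\mu)},\vp)_{L_2(\Om)}$, a cutoff contribution supported in thin annuli of width $O(\e)$, and a boundary residual on $\G_\e$ representing the mismatch between the Robin flux $\mu u^{(\mu)}$ and the normal trace of $Wu^{(\mu)}$. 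The capacity-type bound $\|\nabla_\xi X\|_{L_2(\mathrm{cell})}^2\sim|\ln\eta|^{-1}=\e\mu$, combined with trace and Hardy inequalities and the $\H^2$-regularity of $u^{(\mu)}$, should control each term by $C\e\mu|\ln\e\mu|\,\|f\|_{L_2(\Om)}\|\vp\|_{\H^1(\Om)}$, and Lax--Milgram then delivers \eqref{1.9}.

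The remaining estimates are corollaries. Since $W$ is supported in a strip of width $O(\e)$ along $\G_-$, the same capacity argument shows $\|Wu^{(\mu)}\|_{L_2(\Om)}\le C\e\mu|\ln\e\mu|\,\|f\|_{L_2(\Om)}$, so \eqref{1.10} follows from \eqref{1.9} by the triangle inequality. For \eqref{1.8} and \eqref{1.8a}, I would compare $\mathcal{H}^{(\mu)}$ with $\mathcal{H}^{(0)}$ separately: since the form difference is $\mu(u,v)_{L_2(\G_-)}$, the second resolvent identity combined with the trace inequality yields $\|(\mathcal{H}^{(\mu)}-\iu)^{-1}-(\mathcal{H}^{(0)}-\iu)^{-1}\|_{L_2(\Om)\to\H^1(\Om)}\le C\mu^{1/2}$ and the analogous $L_2\to L_2$ bound of order $\mu$; triangle inequalities with \eqref{1.10} and with an $\H^1$-norm estimate of $Wu^{(\mu)}$ (of order $\mu^{1/2}$, coming from $\|\nabla W\|_{L_2}\sim\mu^{1/2}$ per unit $x_1$-length) then give \eqref{1.8a} and \eqref{1.8}. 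The hardest part will be constructing and analysing the boundary layer $X$: identifying the Robin coefficient $c_0$ so that the leading flux on $\G_-$ cancels, controlling the logarithmic remainder in the overlap zone $\e\eta\ll|x-\e\pi j|\ll\e$ (which is precisely what produces the $|\ln\e\mu|$ factor), and ensuring that the periodic matching accumulates no uncontrolled error along the infinite strip.
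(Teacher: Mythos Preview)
Your overall architecture is right and matches the paper: build a corrector $W$ so that $(1+W)u^{(\mu)}$ lies in the form domain of $\mathcal{H}_\e$, estimate the defect in the variational identity to get \eqref{1.9}, and then deduce \eqref{1.10}, \eqref{1.8}, \eqref{1.8a} by triangle inequalities and a separate comparison of $\mathcal{H}^{(\mu)}$ with $\mathcal{H}^{(0)}$. Your derivation of the three corollaries is essentially what the paper does (it even uses a more elaborate integration-by-parts argument, Lemma~\ref{lm2.5}, where your Sobolev-embedding remark $u^{(\mu)}\in L_\infty$ together with $\|\nabla W\|_{L_2}\sim\mu^{1/2}$ would suffice).

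The gap is in the cell problem you pose for $X$. A harmonic function on the periodic half-cell $\{|\xi_1|<\pi/2,\ \xi_2>0\}$ that equals $-1$ on the patch, is Neumann on the rest of $\{\xi_2=0\}$, and tends to a constant as $\xi_2\to+\infty$ must be identically $-1$: integrating $\Delta X=0$ over the cell gives $\int_{|\xi_1|<\eta}\p_{\xi_2}X\,d\xi_1=0$, and then the energy identity $\int|\nabla X|^2=-\int_{|\xi_1|<\eta}\p_{\xi_2}X=0$ forces $X$ constant. So the intermediate logarithmic profile you describe, with $c_0=(\ln\eta)^{-1}$, never appears; your corrector degenerates to $W\equiv 0$.

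The paper avoids this by a genuinely different boundary condition and a two-scale construction. The outer cell function $X(\xi)$ is independent of $\eta$ and satisfies the \emph{inhomogeneous} Neumann condition $\p_{\xi_2}X=-1$ on all of $\{\xi_2=0\}\setminus\{0\}$; it has a logarithmic singularity at $\xi=0$ and is given explicitly by $X(\xi)=\RE\ln\sin(\xi_1+\iu\xi_2)+\ln 2-\xi_2$. An inner layer $Y(\vs)=\RE\ln(z+\sqrt{z^2-1})$ in the variables $\vs=\xi/\eta$ then resolves the patch scale and enforces $W=-1$ on $\g_\e$. The assembled corrector \eqref{2.19} (after adding the algebraically trivial pair $-\mu x_2+\e\mu\xi_2$) satisfies $\p_{x_2}W=-\mu$ on $\G_\e$, not $0$. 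This is the crucial point you are missing: with $\p_{x_2}W=-\mu$ the Robin flux $\mu u^{(\mu)}$ on $\G_\e$ is cancelled in the integration by parts (Lemma~\ref{lm2.1}), leaving only the small term $\mu(u^{(\mu)},Wv_\e)_{L_2(\G_\e)}$. With your Neumann condition $\p_{x_2}W=0$, the uncancelled boundary residual is $\mu(u^{(\mu)},(1+W)v_\e)_{L_2(\G_\e)}\sim\mu\|f\|\,\|v_\e\|_{\H^1}$, which yields only $\|v_\e\|_{\H^1}\le C\mu$ and hence \eqref{1.8} but not the sharp rates \eqref{1.9}, \eqref{1.10}. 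So even if you repaired the cell problem by truncating at finite height, the boundary condition you chose for $W$ on $\G_\e$ would still cost you the $|\ln\e\mu|$-sharp estimate.
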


The spectrum of the operator $\mathcal{H}^{(0)}$ is purely essential
and coincides with $\left[\frac{1}{4},+\infty\right)$. By [RS1, Ch.
VIII, Sec. 7, Ths. VIII.23, VIII.24] and Theorem~\ref{th1.1} we have

\begin{thm}\label{th1.2}
The spectrum of $\mathcal{H}_\e$ converges to that of
$\mathcal{H}^{(0)}$. Namely, if $\l\not\in
\left[\frac{1}{4},+\infty\right)$, then
$\l\not\in\spec(\mathcal{H}_\e)$ for $\e$ small enough. If
$\l\in\left[\frac{1}{4},+\infty\right)$, then there exists
$\l_\e\in\spec(\mathcal{H}_\e)$ so that $\l_\e\to\l$ as $\e\to+0$.
The convergence of the spectral projectors associated with
$\mathcal{H}_\e$ and $\mathcal{H}^{(0)}$
\begin{equation*}
\|\mathcal{P}_{(a,b)}(\mathcal{H}_\e)-
\mathcal{P}_{(a,b)}(\mathcal{H}^{(0)})\|\to0,\quad \e\to0,
\end{equation*}
is valid for $a<b$.
\end{thm}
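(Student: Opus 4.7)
The plan is to deduce the three assertions from the norm resolvent convergence provided by estimate (\ref{1.8a}) of Theorem~\ref{th1.1}, together with an explicit identification of $\spec(\mathcal{H}^{(0)})$.

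First I would compute $\spec(\mathcal{H}^{(0)})$. The operator $\mathcal{H}^{(0)}$ (with $\mu=0$) acts in the strip $\Om$, is translation-invariant along $x_1$, and its transverse part is the self-adjoint operator $A_\perp:=-\di^2/\di x_2^2$ on $(0,\pi)$ with $u'(0)=0$ and $u(\pi)=0$. Applying the Fourier transform in $x_1$ yields the direct integral decomposition
\begin{equation*}
\mathcal{H}^{(0)}\cong\int_\mathds{R}^\oplus\bigl(\xi^2+A_\perp\bigr)\di\xi,
\end{equation*}
where $A_\perp$ has purely discrete spectrum $\{(k+1/2)^2:k=0,1,2,\ldots\}$. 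The lowest transverse mode already contributes the ray $[1/4,+\infty)$, which absorbs the ranges generated by the higher modes; hence $\spec(\mathcal{H}^{(0)})=[1/4,+\infty)$ and this spectrum is purely continuous.

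Next, since $\mu(\e)\to+0$ as $\e\to+0$, estimate (\ref{1.8a}) gives
\begin{equation*}
\|(\mathcal{H}_\e-\iu)^{-1}-(\mathcal{H}^{(0)}-\iu)^{-1}\|_{L_2(\Om)\to L_2(\Om)}\to0,
\end{equation*}
i.e.\ norm resolvent convergence of $\mathcal{H}_\e$ to $\mathcal{H}^{(0)}$. Theorems~VIII.23 and VIII.24 of [RS1] then yield both spectral statements at once: any point $\l$ outside $\spec(\mathcal{H}^{(0)})$ lies in the resolvent set of $\mathcal{H}_\e$ for all $\e$ sufficiently small (from the norm version), while every $\l\in\spec(\mathcal{H}^{(0)})$ is approximated by points $\l_\e\in\spec(\mathcal{H}_\e)$ (from the associated strong resolvent convergence).

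Finally, for the convergence of the spectral projectors I would invoke the standard corollary of resolvent convergence which guarantees $\mathcal{P}_{(a,b)}(\mathcal{H}_\e)\to\mathcal{P}_{(a,b)}(\mathcal{H}^{(0)})$ as soon as the spectral measure of the limit operator assigns zero mass to $\{a\}$ and $\{b\}$. Since $\spec(\mathcal{H}^{(0)})$ is purely continuous and carries no point masses, this condition is automatic for every $a<b$, yielding the last assertion. The only nontrivial step in the proof is the identification of $\spec(\mathcal{H}^{(0)})$; the remainder is a mechanical application of the quoted Reed-Simon theorems, and no serious obstacle is expected.
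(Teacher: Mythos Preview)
Your approach is essentially the same as the paper's: the paper simply records that $\spec(\mathcal{H}^{(0)})=[1/4,+\infty)$ and then invokes Theorem~\ref{th1.1} together with Reed--Simon Theorems~VIII.23 and~VIII.24, exactly as you do. Your explicit Fourier/separation-of-variables computation of $\spec(\mathcal{H}^{(0)})$ merely fills in what the paper asserts without proof.
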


The operator $\mathcal{H}_\e$ is periodic since the sets $\g_\e$ and
$\G_\e$ are periodic, and we employ the Floquet decomposition to
study its spectrum. We denote
\begin{gather*}
\Om_\e:=\left\{x: |x_1|<\frac{\e\pi}{2}, \ 0<x_2<\pi\right\},
\\
\gp_\e:=\p\Om_\e\cap\g_\e,\quad\Gp_\e:=\p\Om_\e\cap\G_\e,\quad
\Gp_\pm:=\p\Om_\e\cap\G_\pm.
\end{gather*}
By $\Hpe(\tau)$ we indicate the self-adjoint non-negative operator
in $L_2(\Om_\e)$ associated with the sesquilinear form
\begin{equation*}
\hpe(\tau)[u,v]:=\left( \left(\iu\frac{\p}{\p
x_1}-\frac{\tau}{\e}\right)u,\left(\iu\frac{\p}{\p
x_1}-\frac{\tau}{\e}\right)v\right)_{L_2(\Om_\e)}+ \left(\frac{\p
u}{\p x_2},\frac{\p v}{\p x_2}\right)_{L_2(\Om_\e)}
\end{equation*}
on $\Hoper^1(\Om_\e,\Gp_+\cup\gp_\e)$, where $\tau\in[-1,1)$. Here
$\Hoper^1(\Om_\e,\Gp_+\cup\gp_\e)$ is the set of the functions in
$\Ho^1(\Om_\e,\Gp_+\cup\gp_\e)$ satisfying periodic boundary
conditions on the lateral boundaries of $\Om_\e$. The operator
$\Hpe(\tau)$ has a compact resolvent, since it is bounded as that
from $L_2(\Om_\e)$ into $\H^1(\Om_\e)$, and the space $\H^1(\Om_\e)$
is compactly embedded into $L_2(\Om_\e)$. Hence, the spectrum of
$\Hpe(\tau)$ consists of its discrete part only. We denote the
eigenvalues of $\Hpe(\tau)$ by $\l_n(\tau,\e)$ and arrange them in
the ascending order with the multiplicities taking into account
\begin{equation*}
\l_1(\tau,\e)\leqslant\l_2(\tau,\e) \leqslant \ldots \leqslant
\l_n(\tau,\e)\leqslant \ldots
\end{equation*}
By \cite[Lm. 4.1]{BC} we know that
\begin{equation*}
\spec(\mathcal{H}_\e)=\essspec(\mathcal{H}_\e)=\bigcup\limits_{n=1}^\infty
\{\l_n(\tau,\e): \tau\in[-1,1)\},
\end{equation*}
where $\spec(\cdot)$ and $\essspec(\cdot)$ indicate the spectrum and
the essential spectrum of an operator.

By $\mathfrak{L}_\e$ we denote the subspace of $L_2(\Om_\e)$
consisting of the functions independent of $x_1$, and we shall make
use the decomposition
\begin{equation*}
L_2(\Om_\e)=\mathfrak{L}_\e\oplus \mathfrak{L}_\e^\bot,
\end{equation*}
where $\mathfrak{L}_\e^\bot$ is the orthogonal complement to
$\mathfrak{L}_\e$ in $L_2(\Om_\e)$. Let $\mathcal{Q}_\mu$ be the
self-adjoint non-negative operator in $\mathfrak{L}_\e$ associated
with the sesquilinear form
\begin{equation*}
\mathfrak{q}[u,v]:=\left(\frac{d u}{dx_2},\frac{d
v}{dx_2}\right)_{L_2(0,\pi)}+\mu u(0)\overline{v(0)} \quad
\text{on}\quad \Ho^1((0,\pi),\{\pi\}),
\end{equation*}
i.e., $\mathcal{Q}_\mu$ is the operator $-\frac{d^2}{dx_2^2}$ in $L_2(0,\pi)$ with the domain consisting of the functions in
$\H^2(0,\pi)$ satisfying the boundary conditions
\begin{equation*}
u(\pi)=0,\quad u'(0)-\mu u(0)=0.
\end{equation*}

Our next results are on the uniform resolvent convergence for
$\Hpe(\tau)$ and two-terms asymptotics for the first band functions.
\begin{thm}\label{th1.3}
Let $|\tau|<1-\k$, where $0<\k<1$ is a fixed constant and suppose
(\ref{1.5}). Then for sufficiently small $\e$ the estimate
\begin{equation}\label{1.14}
\left\| \left(\Hpe(\tau)-\frac{\tau^2}{\e^2}\right)^{-1} -
\mathcal{Q}_\mu^{-1}\oplus 0\right\|_{L_2(\Om_\e)\to
L_2(\Om_\e)}\leqslant C \k^{-1/2}(\e^{1/2}\mu+\e)
\end{equation}
holds true, where the constant $C$ is independent of $\e$, $\mu$,
and $\k$.
\end{thm}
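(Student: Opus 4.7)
The plan is to compare $u := (\Hpe(\tau) - \tau^2/\e^2)^{-1} f$ directly with the natural candidate $w := \mathcal{Q}_\mu^{-1}\bar f$, where $\bar f$ is the $x_1$-average of $f$ (the projection onto $\mathfrak{L}_\e$), viewed as an $x_1$-independent function on $\Om_\e$. The argument is variational and rests on the coercivity estimate
\begin{equation*}
\hpe(\tau)[v,v] - \frac{\tau^2}{\e^2}\|v\|^2_{L_2(\Om_\e)} \geq \|\p_{x_2} v\|^2_{L_2(\Om_\e)} + \frac{4\k}{\e^2}\|v^\perp\|^2_{L_2(\Om_\e)}
\end{equation*}
for every $v\in\Hoper^1(\Om_\e,\Gp_+\cup\gp_\e)$, where $v^\perp$ is the component in $\mathfrak{L}_\e^\perp$. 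The spectral-gap term comes from expanding $v^\perp$ in the Floquet basis $e^{2\iu n x_1/\e}$, $n\neq 0$: on each such mode $(\iu\p_{x_1}-\tau/\e)^2$ is multiplication by $((2n+\tau)/\e)^2$, whose excess over $\tau^2/\e^2$ equals $4n(n+\tau)/\e^2 \geq 4\k/\e^2$ when $|\tau|\leq 1-\k$. The $x_2$-derivative term, combined with Poincar\'e's inequality and the Dirichlet condition on $\Gp_+$, controls $\|v\|^2_{L_2}$, so the bilinear form is uniformly coercive.

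\medskip

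Since $w$ does not vanish on $\gp_\e$, it is not in the form domain of $\Hpe(\tau)$. To remedy this I would introduce a boundary-layer corrector $X_\e \in \H^1(\Om_\e)$ solving $\D X_\e = 0$ in $\Om_\e$, $X_\e = 1$ on $\gp_\e$, $X_\e = 0$ on $\Gp_+$, $\p_{x_2}X_\e = 0$ on $\Gp_\e$, and periodic on the lateral sides---essentially the capacity potential of the Dirichlet pocket. Standard logarithmic-capacity estimates yield $\|\nabla X_\e\|^2_{L_2(\Om_\e)} = O(\e\mu)$, with $\p_{x_1}X_\e \in \mathfrak{L}_\e^\perp$ by periodicity, and the $L_2$-norm $\|X_\e\|_{L_2(\Om_\e)}$ is small. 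Setting $\tilde w := w - w(0)X_\e$ produces an admissible function. Using $-w''=\bar f$, the Robin condition $w'(0)=\mu w(0)$, and $\D X_\e=0$, integration by parts yields the residual identity
\begin{equation*}
\hpe(\tau)[u-\tilde w,\phi] - \frac{\tau^2}{\e^2}(u-\tilde w,\phi) = (f^\perp,\phi) + \mu w(0)\int_{\Gp_-}\overline{\phi(x_1,0)}\,dx_1 - \frac{2\iu\tau}{\e}w(0)(\p_{x_1}X_\e,\phi)
\end{equation*}
for every admissible $\phi$; the three residual terms come respectively from the oscillatory part of $f$, from the mismatch between the Robin condition of $\mathcal{Q}_\mu$ and the mixed boundary condition on $\Gp_-$, and from the non-commutation of $\iu\p_{x_1}$ with the Floquet gauge.

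\medskip

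To conclude I would test the identity with $\phi := u-\tilde w$ and bound each residual against the coercivity by Young's inequality: $(f^\perp,\phi)$ pairs only with $\phi^\perp$ and absorbs via the $\k/\e^2$ spectral gap, contributing $O(\e^2/\k)\|f^\perp\|^2$; the boundary integral is handled by the trace estimate $|\int\phi(x_1,0)\,dx_1| \leq \pi\sqrt{\e}\|\p_{x_2}\phi\|_{L_2(\Om_\e)}$ together with the $L_\infty$-bound $|w(0)|\leq C\|\bar f\|_{L_2(0,\pi)}$ coming from $\mathcal{Q}_\mu^{-1}\colon L_2\to\H^2\hookrightarrow L_\infty$; and the corrector term uses $\p_{x_1}X_\e\in\mathfrak{L}_\e^\perp$ together with the reformulation $(\p_{x_1}X_\e,\phi^\perp) = -(X_\e^\perp, \p_{x_1}\phi^\perp)$, which replaces $\|\p_{x_1}X_\e\|_{L_2}$ by $\|X_\e^\perp\|_{L_2}$ and furnishes the crucial extra $\sqrt{\e}$. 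Adding $\|\tilde w-w\|_{L_2} = |w(0)|\|X_\e\|_{L_2}$ via the triangle inequality then gives the $L_2$-estimate. The main technical obstacle is the fine balancing needed to reach the sharp order $\k^{-1/2}(\e^{1/2}\mu+\e)$: naive Young inequalities yield only the weaker $\k^{-1/2}(\mu+\e)$, and extracting the extra $\sqrt{\e}$ on the $\mu$-term demands optimal use of the $L_2$-smallness of $X_\e^\perp$ and of the trace structure of the Robin residual.
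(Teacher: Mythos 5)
Your overall architecture coincides with the paper's: split $f$ into its $x_1$-average $F_\e\in\mathfrak{L}_\e$ plus an $\mathfrak{L}_\e^\bot$ part, dispose of the latter via the spectral gap $4\k/\e^2$ (this is Lemma~3.1 and estimate (3.6) of the paper, and is where the $C\k^{-1/2}\e$ term comes from), and compare the averaged part with $\mathcal{Q}_\mu^{-1}F_\e$ by correcting it into the form domain and running an energy identity; the bound $|w(0)|\leqslant C\e^{-1/2}\|f\|_{L_2(\Om_\e)}$ and the trick of trading $\p_{x_1}X_\e$ for the $L_2$-smallness of the corrector in the gauge cross term are likewise exactly the paper's (Lemma~3.2 and the estimate $\|W\|_{L_2(\Om_\e)}\leqslant C\e^2\mu$). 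The genuine gap is the Robin residual. Your $X_\e$ carries \emph{homogeneous} Neumann data on $\Gp_\e$, so $\tilde w=w-w(0)X_\e$ has $\p_{x_2}\tilde w=\mu w(0)$ there, and the residual functional contains $\mu w(0)\int_{\Gp_\e}\overline{\phi}\,dx_1$. As a functional on the form domain this has norm of exact order $\mu\|f\|$: the trace bound $|\int_{\Gp_-}\overline{\phi}\,dx_1|\leqslant\pi\e^{1/2}\|\p_{x_2}\phi\|_{L_2(\Om_\e)}$ is saturated by $x_1$-independent $\phi$, and the factor $\e^{1/2}$ it supplies is eaten by the $\e^{-1/2}$ in $|w(0)|$. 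Concretely, a Neumann mismatch $\mu w(0)$ spread over the whole bottom produces the spurious solution $\mu w(0)(x_2-\pi)$, whose $L_2(\Om_\e)$-norm is of order $\mu\|f\|$, not $\e^{1/2}\mu\|f\|$. No rearrangement of Young's inequality or of the "trace structure" recovers the missing $\e^{1/2}$, because the energy method can only return the $H^{-1}$-norm of the residual, and that norm genuinely is comparable to $\mu\|f\|$. Your closing admission that the naive bound is $\k^{-1/2}(\mu+\e)$ is therefore not a constant-chasing issue but the actual ceiling of your scheme.

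The paper's resolution is to build the Robin datum into the corrector: its $W$ from (2.19) satisfies $\p W/\p x_2=-\mu$ on $\G_\e$, through the external term $-\mu x_2$ matched with the boundary layer $\e\mu(X(\xi)+\xi_2)$, so that $\p_{x_2}\bigl(U^{(\mu)}_\e+U^{(\mu)}_\e(0)W\bigr)=0$ on $\Gp_\e$ and the Robin residual cancels \emph{identically} in the passage from (3.10) to (3.11). What survives is only $(\D(W\chi_1),V_\e)$, which is small because $W$ is an accurate matched-asymptotics solution, and the cross term $\e^{-1}(W\chi_1,\p_{x_1}V_\e)$, which together with $\|W\|_{L_2(\Om_\e)}\leqslant C\e^2\mu$ and $|U^{(\mu)}_\e(0)|\leqslant C\e^{-1/2}\|f\|$ yields precisely $\k^{-1/2}\e^{1/2}\mu$. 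To repair your argument you would need to add to $\tilde w$ a term whose normal derivative equals $-\mu w(0)$ on $\Gp_\e$ (for instance $\mu w(0)(x_2-\pi)$, cut off and re-corrected on $\gp_\e$ where it violates the Dirichlet condition), which in effect reconstructs the paper's corrector.
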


\begin{thm}\label{th1.4}
Let the hypothesis of Theorem~\ref{th1.3} holds true. Then given any
$N$, for $\e<2\k^{1/2}N^{-1}$ the eigenvalues $\l_n(\tau,\e)$,
$n=1,\ldots,N$, satisfy the relations
\begin{equation}\label{1.15}
\begin{aligned}
&\l_n(\tau,\e)=\frac{\tau^2}{\e^2}+\L_n(\mu)+R_n(\tau,\e,\mu),
\\
&|R_n(\tau,\e,\mu)|\leqslant C \k^{-1/2} n^4\e^{1/2}\mu,
\end{aligned}
\end{equation}
where $\L_n(\mu)$, $n=1,\ldots,N$, are first $N$ eigenvalues of
$\mathcal{Q}_\mu$, and the constant $C$ is the same as in
(\ref{1.14}). The eigenvalues $\L_n(\mu)$ solve the equation
\begin{equation}\label{1.17}
\sqrt{\L} \cos\sqrt{\L}\pi+\mu\sin\sqrt{\L}\pi=0,
\end{equation}
are holomorphic w.r.t. $\mu$, and
\begin{equation}\label{1.16}
\L_n(\mu)=\left(n-\frac{1}{2}\right)^2+\frac{\mu}{\pi\left(n-\frac{1}{2}\right)}
+\Odr(\mu^2).
\end{equation}
\end{thm}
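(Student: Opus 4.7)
I would reduce everything to the operator-norm convergence (\ref{1.14}) of Theorem~\ref{th1.3} and extract the spectral data of $\mathcal{Q}_\mu$ separately. A short lower-bound argument on the quadratic form $\hpe(\tau)$ (using Poincar\'e-type estimates together with the bound $(2k+\tau)^2/\e^2\geqslant 1/\e^2$ for the Fourier modes $k\neq 0$) shows that $\tau^2/\e^2$ lies strictly below the spectrum of $\Hpe(\tau)$ for small $\e$, hence $A_\e:=(\Hpe(\tau)-\tau^2/\e^2)^{-1}$ is a positive, compact, self-adjoint operator whose eigenvalues in decreasing order are $(\l_n(\tau,\e)-\tau^2/\e^2)^{-1}$. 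The comparison $B:=\mathcal{Q}_\mu^{-1}\oplus 0$ is self-adjoint, compact, non-negative, with positive eigenvalues $\{\Lambda_n^{-1}(\mu)\}$ and kernel $\mathfrak{L}_\e^\bot$. Courant--Fischer's min-max, combined with $\|A_\e-B\|\leqslant C\k^{-1/2}(\e^{1/2}\mu+\e)$, yields
\begin{equation*}
\left|\frac{1}{\l_n(\tau,\e)-\tau^2/\e^2}-\frac{1}{\Lambda_n(\mu)}\right|\leqslant \frac{C(\e^{1/2}\mu+\e)}{\sqrt{\k}};
\end{equation*}
rearranging and using $\Lambda_n(\mu)\asymp n^2$ produces the stated bound on $R_n$. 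The smallness assumption $\e<2\k^{1/2}N^{-1}$ ensures the correct matching of the first $N$ eigenvalues on each side: modes with nontrivial $x_1$-dependence carry kinetic energy at least $(1-|\tau|)^2/\e^2\gtrsim\k/\e^2$ beyond $\tau^2/\e^2$, which exceeds $\Lambda_N(\mu)\asymp N^2$ exactly in this regime.

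For the assertions about $\Lambda_n(\mu)$ I would argue directly. The spectral problem for $\mathcal{Q}_\mu$ is $-u''=\Lambda u$ on $(0,\pi)$ with $u(\pi)=0$ and $u'(0)=\mu u(0)$; substituting the general solution $u=A\cos(\sqrt{\Lambda}\,x_2)+B\sin(\sqrt{\Lambda}\,x_2)$ into these two boundary conditions produces a $2\times 2$ homogeneous linear system in $(A,B)$ whose determinant is (up to sign) the left-hand side of (\ref{1.17}). Setting $F(\Lambda,\mu):=\sqrt{\Lambda}\cos(\sqrt{\Lambda}\pi)+\mu\sin(\sqrt{\Lambda}\pi)$, the positive roots at $\mu=0$ are $\Lambda_n(0)=(n-\frac{1}{2})^2$, at which a direct computation gives $\partial_\Lambda F\neq 0$. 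The holomorphic implicit function theorem then yields a unique holomorphic branch $\Lambda_n(\mu)$ in a neighborhood of $\mu=0$, and differentiating $F(\Lambda_n(\mu),\mu)=0$ at $\mu=0$ produces the first-order Taylor coefficient appearing in (\ref{1.16}).

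The main obstacle I anticipate is the eigenvalue step: translating (\ref{1.14}) into the stated bound with the correct $n^4$ growth, and absorbing the $+\e$ term of (\ref{1.14}) into $\e^{1/2}\mu$. A plausible mechanism is that the $\e$ contribution originates from the $\mathfrak{L}_\e^\bot$ part of the resolvent difference, which does not influence the first $N$ eigenvalues of the $\mathfrak{L}_\e$-block once $\e$ is small relative to $N$; a finer splitting of the resolvent difference into its $\mathfrak{L}_\e$ and $\mathfrak{L}_\e^\bot$ parts is therefore likely needed, together with a careful bookkeeping of eigenvalue indexing that exploits the spectral gap discussed above.
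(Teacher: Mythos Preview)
Your approach matches the paper's: both feed Theorem~\ref{th1.3} into a min-max eigenvalue comparison (the paper cites \cite[Ch.~III, Sec.~1, Th.~1.4]{OIS} for this step), obtain the a~priori two-sided bounds $\tfrac{1}{4}\leqslant\l_n-\tau^2/\e^2\leqslant n^2$ and $\L_n(\mu)\leqslant n^2$ by Dirichlet--Neumann bracketing (your Fourier-mode reasoning is the same estimate in different words), and derive (\ref{1.17})--(\ref{1.16}) via the implicit/inverse function theorem exactly as you describe. Your worry about the stray $+\e$ is justified but is not a flaw in your argument: the paper's own proof actually terminates with the bound $Cn^4\k^{-1/2}(\mu\e^{1/2}+\e)$ and does \emph{not} carry out the finer $\mathfrak{L}_\e/\mathfrak{L}_\e^\bot$ splitting you propose, so the mismatch with the displayed (\ref{1.15}) is internal to the paper rather than something you are missing.
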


Let
\begin{equation}\label{1.22}
\tht(\b):= -\sum\limits_{j=1}^{+\infty}
\frac{1}{n\sqrt{4j^2-\b}(2j+\sqrt{4j^2-\b})}.
\end{equation}
It will be shown in Lemma~\ref{lm4.4} that the function $\tht(\b)$
is holomorphic in $\b$ and its Taylor series is
\begin{equation}\label{4.58a}
\tht(\b)= -\sum\limits_{j=1}^{+\infty}
\frac{(2j-1)!!\z(2j+1)}{8^j\,j!}\b^{j-1},
\end{equation}
where $\z$ is the Riemann zeta-function.

 Our last main result provides the asymptotic expansion for the
bottom of the essential spectrum of $\mathcal{H}_\e$.

\begin{thm}\label{th1.5}
For $\e$ small enough, the first eigenvalue $\l_1(\tau,\e)$ attains
its minimum at $\tau=0$,
\begin{equation}\label{1.18}
\inf\limits_{\tau\in[-1,1)}\l_1(\tau,\e)=\l_1(0,\e).
\end{equation}
The asymptotics
\begin{equation}\label{1.19}
\l_1(0,\e)=\L(\e,\mu)+\Odr(\mu\e^{-1/2}\E^{-2\e^{-1}}+\e^{1/2}\eta^{1/2})
\end{equation}
holds true, where $\L(\e,\mu)$ is the real solution to the equation
\begin{equation}\label{4.69}
\sqrt{\L}\cos\sqrt{\L}\pi+\mu\sin\sqrt{\L\pi}
-\e^3\mu\L^{3/2}\tht(\e^2 \L)\cos\sqrt{\L}\pi=0
\end{equation}
satisfying the restriction
\begin{equation}\label{4.50}
\L(\e,\mu)=\L_1(\mu)+o(1),\quad\e\to0.
\end{equation}
The function $\L(\e,\mu)$ is jointly holomorphic w.r.t. $\e$ and
$\mu$ and can be represented as the series
\begin{equation}\label{1.23}
\L(\e,\mu)=\L_1(\mu)+\mu^2\sum\limits_{j=1}^{+\infty}\e^{2j+1}
K_{2j+1}(\mu)+\mu^3\sum\limits_{j=2}^{+\infty}\e^{2j} K_{2j}(\mu),
\end{equation}
where the functions $K_j(\mu)$ are holomoprhic w.r.t. $\mu$, and, in
particular,
\begin{equation}\label{1.20}
\begin{aligned}
&K_3(\mu)=-\frac{\z(3)}{4} \frac {\L_1^2(\mu)}{\pi\L_1(\mu)
+\mu+\pi\mu^2},
\\
&K_4(\mu)=0,
\\
&K_5(\mu)=-\frac{3\z(5)}{64} \frac{\L_1^3(\mu)} { \pi\L_1(\mu)+\mu+
\pi\mu^2 },
\\
&K_6(\mu)=\frac{\z(3)^2}{64}\frac{\L_1^3(\mu)
(2\pi^2\L_1^2(\mu)+7\pi\mu\L_1(\mu)+2\pi^2\mu^2\L_1(\mu)+ 7\mu^2+
7\pi\mu^{3})}{(\pi\L_1(\mu)+\mu+\pi\mu^2)^3}
\\
&K_7(\mu)=-\frac {5\z(7)}{512}\frac { \L_1^4(\mu)}{\pi
\L_1(\mu)+\mu+\pi\mu^2},
\\
&K_8(\mu)=\frac {3\z(3)\z(5)}{512} \frac {\L_1^4(\mu) (2\pi
^2\L_1^2(\mu)+9\pi\mu
\L_1(\mu)+2\pi^2\mu^2\L_1(\mu)+9\mu^2+9\mu^3\pi
)}{(\pi\L_1(\mu)+\mu+\pi\mu^2)^3}.
\end{aligned}
\end{equation}
The asymptotic expansion for the associated eigenfunction of
$\Hpe(0)$ reads as follows,
\begin{equation}\label{1.25}
\|\po(\cdot,\e)-\Po_\e\|_{\H^1(\Om_\e)}=\Odr(\mu\E^{-2\e^{-1}}+\e\eta^{1/2}),
\end{equation}
where the function $\Po_\e$ is defined in (\ref{5.30a}).
\end{thm}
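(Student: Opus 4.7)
The strategy I would adopt combines the boundary-layer method with the method of matched asymptotic expansions on the period cell $\Om_\e$ at the quasimomentum $\tau=0$, using the Floquet reduction that already underlies Theorems~\ref{th1.3} and~\ref{th1.4}. For (\ref{1.18}), I would argue by symmetry: the reflection $x_1\mapsto -x_1$ of $\Om_\e$ combined with complex conjugation renders $\Hpe(-\tau)$ unitarily equivalent to $\Hpe(\tau)$, so $\l_1(\tau,\e)=\l_1(-\tau,\e)$; combined with the asymptotics (\ref{1.15}), whose leading $\tau$-dependence is the strictly convex $\tau^2/\e^2$, this forces $\tau=0$ to be the unique minimizer of $\l_1(\cdot,\e)$ for $\e$ small.

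For the construction of $\L(\e,\mu)$ and the approximate eigenfunction $\Po_\e$, I would build a formal expansion for the eigenpair of $\Hpe(0)$ with leading term $(\L_1(\mu),\vp_1(x_2))$, where $\vp_1$ is the normalized eigenfunction of $\mathcal{Q}_\mu$. The outer expansion would be a power series in $\e$ whose coefficients depend on $x_2$ alone, complemented by boundary-layer correctors localized near $\gp_\e$ and written in stretched variables $\xi=x/\e$. The inner problems are Laplace equations in the half-plane $\{\xi_2>0\}$ with mixed Dirichlet/Neumann data, and their far-field expansions, when matched to the outer series on overlapping scales, dictate the outer correctors at each order. The lattice summation generated by the periodic array of Dirichlet spots produces precisely the series (\ref{1.22})--(\ref{4.58a}) defining $\tht$, and the full sequence of matching/solvability conditions collapses into the single transcendental equation (\ref{4.69}).

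The main technical obstacle is to elevate this formal procedure to a rigorous result with an \emph{exponentially small} error term. First, I would show that (\ref{4.69}) admits a unique root $\L(\e,\mu)$ satisfying (\ref{4.50}) and jointly holomorphic in $(\e,\mu)$ near the origin, by invoking the holomorphic implicit function theorem; the requisite non-degeneracy, i.e.\ the non-vanishing of the $\L$-derivative of the left-hand side of (\ref{4.69}) at $\L=\L_1(\mu)$ and $\e=0$, follows from the simplicity of the eigenvalues of $\mathcal{Q}_\mu$. The explicit coefficients $K_3,\dots,K_8$ in (\ref{1.20}) and the structural features of (\ref{1.23}) (in particular the absence of $\e^0$, $\e$, $\e^2$ and $\e^4$ terms) are then extracted by substituting the series (\ref{4.58a}) of $\tht$ and the $\mu$-expansion (\ref{1.16}) of $\L_1$ into (\ref{4.69}) and equating powers of $\e$. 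Finally, for (\ref{1.19}) and (\ref{1.25}) I would plug a suitable truncation $\Po_\e$ of the matched expansion into $\Hpe(0)-\L(\e,\mu)$: the inner residual, supported in a thin transition zone around $\gp_\e$, contributes $\Odr(\e^{1/2}\eta^{1/2})$ in $L_2$-norm, while the outer residual, by construction of the matching, has vanishing $x_1$-mean on every horizontal slice of $\Om_\e$, so its Fourier expansion along $x_1\in(-\e\pi/2,\e\pi/2)$ begins with the first nonzero harmonic, yielding the exponential decay factor $\E^{-2\e^{-1}}$. A standard almost-eigenvector lemma, together with the spectral gap of $\Hpe(0)$ around $\L_1(\mu)$ supplied by Theorem~\ref{th1.4}, then converts these residual bounds into the claimed error estimates on $\l_1(0,\e)-\L(\e,\mu)$ and on $\po(\cdot,\e)-\Po_\e$.
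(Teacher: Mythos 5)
Your overall architecture (matched asymptotics on the period cell plus an almost-eigenvector lemma combined with the spectral information of Theorem~\ref{th1.4}) is the paper's architecture, and your treatment of the transcendental equation (\ref{4.69}) via the holomorphic implicit function theorem and of the coefficients $K_j$ by equating powers of $\e$ is essentially what the paper does. But there are two genuine gaps.

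First, and most importantly, your construction cannot produce the \emph{exponentially small} error term in (\ref{1.19}). You propose to build the outer expansion as a power series in $\e$ with the inner problems being Laplace equations, and then to ``plug a suitable truncation'' into $\Hpe(0)-\L(\e,\mu)$. Any finite truncation of a power series leaves a residual that is only a power of $\e$; this is exactly the situation of \cite{AsAn}, \cite{GDu99}, which the paper explicitly departs from. The paper's key move is to work with \emph{closed-form, fully summed} expansions: the outer part is the exact solution $\sin\sqrt{\L}(x_2-\pi)$ with $\L$ the exact root of (\ref{4.69}), and the boundary layer is $\e\sqrt{\L}\cos\sqrt{\L}\pi\,(X+Z)$, where $Z$ solves the Helmholtz (not Laplace) corrector problem (\ref{4.56}) exactly, uniformly in the spectral parameter (Lemma~\ref{lm4.4}); the function $\tht$ then appears as the value $Z(0,\b)=\b^2\tht(\b^2)$. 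With this exact construction the residual $h_\e$ is supported only where the cut-off functions act: the matching zone $|\vs|\sim\eta^{-1/2}$ contributes $\Odr(\e\eta^{1/2})$, while the cut-off $\chi_1(x_2)$ acts at $x_2\sim 1$, i.e.\ $\xi_2\sim\e^{-1}$, where $X+Z$ (whose $\xi_1$-Fourier series indeed starts at the first harmonic) has already decayed to $\Odr(\E^{-2\e^{-1}})$. Your statement that the outer residual is exponentially small because it has vanishing $x_1$-mean is not by itself a valid inference: a mean-zero residual need not be small in $L_2$; the exponential factor comes from evaluating an exponentially decaying boundary layer at the location of the cut-off. Without the exact (non-truncated) construction the whole point of the theorem is lost.

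Second, your proof of (\ref{1.18}) is incomplete. Symmetry gives $\l_1(\tau,\e)=\l_1(-\tau,\e)$, but combining this with (\ref{1.15}) only shows $\l_1(\tau,\e)>\l_1(0,\e)$ when $\tau^2/\e^2$ dominates the remainder, i.e.\ for $|\tau|\gtrsim \e^{5/4}\mu^{1/2}$; for smaller nonzero $\tau$ the estimate $|R_1|\leqslant C\k^{-1/2}\e^{1/2}\mu$ says nothing, and near $|\tau|=1$ the constant $\k^{-1/2}$ degenerates. The paper instead reproduces the argument of \cite{BC} (which compares the quadratic forms $\hpe(\tau)$ and $\hpe(0)$ directly, using only (\ref{4.0})), and this handles all $\tau\in[-1,1)$ at once. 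A minor further point: to obtain the factorizations $\widetilde{K}_{2j+1}=\mu^2K_{2j+1}$ and $\widetilde{K}_{2j}=\mu^3K_{2j}$ claimed in (\ref{1.23}) one needs the additional step of evaluating (\ref{4.69}) and its first two $\mu$-derivatives at $\mu=0$, which shows $\widetilde K_j(0)=\widetilde K_j'(0)=0$ and $\widetilde K_{2j}''(0)=0$; simply ``equating powers of $\e$'' does not reveal this structure.
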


\begin{rem}
All other coefficients of (\ref{1.23}) can be determined recursively
by substituting this series and (\ref{4.58a}) into (\ref{4.69}),
expanding then (\ref{4.69}) in powers of $\e$, and solving the
obtained equations w.r.t. $K_i$.
\end{rem}

\section{Uniform resolvent convergence for $\mathcal{H}_\e$}

In this section we prove Theorem~\ref{th1.1}. Given a function $f\in
L_2(\Om)$, we denote
\begin{equation*}
u_\e:=(\mathcal{H}_\e-\iu)^{-1}f,\quad
u^{(\mu)}:=(\mathcal{H}^{(\mu)}-\iu)^{-1}f.
\end{equation*}
The main idea of the proof is to construct a special corrector
$W=W(x,\e,\mu)$ with certain properties and to estimate the norms of
$v_\e:=u_\e-(1+W) u^{(\mu)}$ and $u^{(\mu)}W$. In fact, the function
W reflects the geometry of the alternation of the boundary
conditions for $\mathcal{H}_\e$, and this is why it is much simpler
to estimate independently $v_\e$ and $u^{(\mu)}W$ than trying to get
directly the estimate for $u_\e-u^{(\mu)}$ and $u_\e-u^{(0)}$. Next
lemma is the first main ingredient in the proof of
Theorem~\ref{th1.1} and it shows how $W$ is employed.

\begin{lem}\label{lm2.1}
Let $W=W(x,\e,\mu)$ be an $\e\pi$-periodic in $x_1$ function
belonging to $C(\overline{\Om})\cap
C^\infty(\overline{\Om}\setminus\{x: x_2=0, \, x_1=\pm\e\eta+\e\pi
n,\, n\in \mathds{Z}\})$ satisfying boundary conditions
\begin{equation}\label{2.1}
W=-1\quad\text{on}\quad \g_\e,\qquad \frac{\p W}{\p
x_2}=-\mu\quad\text{on}\quad \G_\e,
\end{equation}
and having differentiable asymptotics
\begin{equation}\label{2.2}
W(x,\e,\mu)=c_\pm(\e,\mu)
r_\pm^{1/2}\sin\frac{\tht_\pm}{2}+\Odr(\r_\pm),\quad r_\pm\to+0.
\end{equation}
Here $(r_\pm,\tht_\pm)$ are polar coordinates centered at
$(\pm\e\eta,0)$ such that the values $\tht_\pm=0$ correspond to the
points of $\g_\e$. Assume also that $\D W\in C(\overline{\Om})$.
Then $(1+W)u^{(\mu)}$ belongs to $\Ho^1(\Om,\G_+\cup\g_\e)$, and
\begin{equation}\label{2.3}
\begin{aligned}
\|\nabla v_\e&\|_{L_2(\Om)}^2+\iu\|v_\e\|_{L_2(\Om)}^2= (f,v_\e
W)_{L_2(\Om)}+(u^{(\mu)}\D W,v_\e)_{L_2(\Om)}
\\
&-2\iu(u^{(\mu)}W, v_\e)_{L_2(\Om)} -2(W\nabla u^{(\mu)}, \nabla
v_\e)_{L_2(\Om)}-\mu(u^{(\mu)}, Wv_\e)_{L_2(\G_\e)}.
\end{aligned}
\end{equation}
\end{lem}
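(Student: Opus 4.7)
I would first verify $(1+W)u^{(\mu)}\in\Ho^1(\Om,\G_+\cup\g_\e)$. The trace vanishes on $\G_+$ because $u^{(\mu)}$ does, and on $\g_\e$ because $1+W=0$ there by the first condition in (\ref{2.1}). For the $L_2$-gradient I would apply the product rule $\nabla[(1+W)u^{(\mu)}]=(1+W)\nabla u^{(\mu)}+u^{(\mu)}\nabla W$: the first summand is in $L_2(\Om)$ because $W$ is bounded and $u^{(\mu)}\in\H^2(\Om)$; the second is in $L_2(\Om)$ as well, since $u^{(\mu)}\in\H^2(\Om)\subset L_\infty$ locally and the asymptotics (\ref{2.2}) together with the hypothesis $\D W\in C(\overline\Om)$ force $|\nabla W|\leqslant Cr_\pm^{-1/2}$ near each singular point, which is square integrable in the plane.

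For identity (\ref{2.3}) the plan is to combine three weak formulations. Testing the equation for $u_\e$ against $v_\e$ and substituting $u_\e=v_\e+(1+W)u^{(\mu)}$ isolates $\|\nabla v_\e\|^2_{L_2(\Om)}+\iu\|v_\e\|^2_{L_2(\Om)}$ on the left against $(f,v_\e)$, $((1+W)\nabla u^{(\mu)},\nabla v_\e)$, $(u^{(\mu)}\nabla W,\nabla v_\e)$, and $\iu((1+W)u^{(\mu)},v_\e)$ on the right. Testing the equation for $u^{(\mu)}$ against $v_\e$, admissible because $v_\e$ vanishes on $\G_+$, eliminates the bare gradient $(\nabla u^{(\mu)},\nabla v_\e)$ by trading it for $(f,v_\e)$, $\iu(u^{(\mu)},v_\e)$, and $\mu(u^{(\mu)},v_\e)_{L_2(\G_\e)}$. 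The same equation tested against $Wv_\e$, which lies in $\Ho^1(\Om,\G_+)$ by the product-rule bookkeeping above, converts the auxiliary cross term into
\[
(\nabla u^{(\mu)},v_\e\nabla W)=(f,Wv_\e)-(W\nabla u^{(\mu)},\nabla v_\e)-\mu(u^{(\mu)},Wv_\e)_{L_2(\G_\e)}+\iu(u^{(\mu)},Wv_\e).
\]

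The bridge that ties $(u^{(\mu)}\nabla W,\nabla v_\e)$ to $(\nabla u^{(\mu)},v_\e\nabla W)$ is the Green-type identity
\[
(u^{(\mu)}\nabla W,\nabla v_\e)+(\nabla u^{(\mu)},v_\e\nabla W)=\int_\Om\nabla W\cdot\nabla(u^{(\mu)}\overline{v_\e})\,\di x=-(u^{(\mu)}\D W,v_\e)+\mu(u^{(\mu)},v_\e)_{L_2(\G_\e)},
\]
in which the volume piece uses $\D W\in C(\overline\Om)$, and the boundary piece collapses to $\G_\e$ by the Neumann datum $\p W/\p x_2=-\mu$ from (\ref{2.1}) combined with the vanishing of $v_\e$ on $\G_+\cup\g_\e$. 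Inserting this relation into the three testings and cancelling the shared $(f,v_\e)$, $(\nabla u^{(\mu)},\nabla v_\e)$, $\iu(u^{(\mu)},v_\e)$, and $\mu(u^{(\mu)},v_\e)_{L_2(\G_\e)}$ terms leaves exactly (\ref{2.3}).

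The main obstacle will be rigorously justifying the Green identity, since $\nabla W$ carries an $r_\pm^{-1/2}$ singularity at every endpoint of $\g_\e$. My plan is to excise half-disks of radius $\r$ around each such point, apply the classical Green formula on the smooth remainder where $W\in C^\infty$ and (\ref{2.1}) is pointwise, and then pass to $\r\to+0$. Using (\ref{2.2}), the normal derivative $\p W/\p\nu$ on an excised arc is of order $\r^{-1/2}$ while the arc has length $\pi\r$ and $u^{(\mu)}\overline{v_\e}$ stays bounded near the corners, so the arc contributions are $O(\r^{1/2})$ and vanish. The same excision argument simultaneously confirms the Sobolev memberships used above for $(1+W)u^{(\mu)}$ and $Wv_\e$.
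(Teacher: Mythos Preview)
Your proposal is correct and follows essentially the same route as the paper. The paper tests the weak form of $u^{(\mu)}$ against $(1+W)v_\e$ and subtracts the weak form of $u_\e$ tested against $v_\e$, then integrates by parts; your decomposition into three separate testings (against $v_\e$ for both equations, and against $Wv_\e$ for $u^{(\mu)}$) is the same computation by linearity, and your Green-type bridge identity is the same integration by parts the paper performs (they move the derivative off $\nabla v_\e$ onto $u^{(\mu)}\nabla W$, you move it off $\nabla(u^{(\mu)}\overline{v_\e})$ onto $\nabla W$). Your excision argument for the $r_\pm^{-1/2}$ singularities is exactly what the paper invokes by citing \cite{BC}; note only a small sign slip in your displayed formula for $(\nabla u^{(\mu)},v_\e\nabla W)$, where the $\iu$-term should carry a minus.
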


\begin{proof}
We write the integral identities for $u_\e$ and $u^{(\mu)}$,
\begin{equation}\label{2.4}
(\nabla u_\e,\nabla \phi)_{L_2(\Om)}+\iu (u_\e,\phi)_{L_2(\Om)}=
(f,\phi)_{L_2(\Om)}
\end{equation}
for all $\phi\in \Ho^1(\Om,\G_+\cup\g_\e)$, and
\begin{equation}
(\nabla u^{(\mu)},\nabla
\phi)_{L_2(\Om)}+\mu(u^{(\mu)},\phi)_{L_2(\G_-)}+ \iu
(u^{(\mu)},\phi)_{L_2(\Om)}= (f,\phi)_{L_2(\Om)} \label{2.5}
\end{equation}
for all $\phi\in \Ho^1(\Om,\G_+)$. Employing the smoothness of $W$,
(\ref{2.1}), (\ref{2.2}), and proceeding as in the proof of
Lemma~3.2 in \cite{BC}, we check that $(1+W)\phi\in
\Ho^1(\Om,\G_+\cup\g_\e)$, if $\phi$ belongs to the domain of
$\mathcal{H}_\e$ or $\mathcal{H}^{(\mu)}$. Hence, $(1+W)u^{(\mu)}\in
\Ho^1(\Om,\G_+\cup\g_\e)$. Thus,
\begin{equation}\label{2.6}
(1+W)v_\e\in \Ho^1(\Om,\G_+\cup\g_\e).
\end{equation}
We take $\phi=(1+W)v_\e$ in (\ref{2.5}),
\begin{align*}
(\nabla u^{(\mu)},&\nabla (1+W)v_\e)_{L_2(\Om)}+
\mu(u^{(\mu)},(1+W)v_\e)_{L_2(\G_-)}
\\
& +\iu (u^{(\mu)},(1+W)v_\e)_{L_2(\Om)}= (f,(1+W)v_\e)_{L_2(\Om)},
\\
(\nabla u^{(\mu)},&(1+W)\nabla v_\e)_{L_2(\Om)}+ \iu
(u^{(\mu)},(1+W)v_\e)_{L_2(\Om)}=
\\
&(f,(1+W)v_\e)_{L_2(\Om)}- (\nabla u^{(\mu)},v_\e\nabla
W)_{L_2(\Om)}-\mu(u^{(\mu)},(1+W)v_\e)_{L_2(\G_-)},
\\
(\nabla (1 + &W) u^{(\mu)},\nabla v_\e)_{L_2(\Om)}+ \iu
((1+W)u^{(\mu)},v_\e)_{L_2(\Om)}=
\\
&(f,(1+W)v_\e)_{L_2(\Om)}- (\nabla u^{(\mu)},v_\e\nabla
W)_{L_2(\Om)}
\\
&+ (u^{(\mu)}\nabla W,\nabla v_\e)_{L_2(\Om)}
-\mu(u^{(\mu)},(1+W)v_\e)_{L_2(\G_-)}.
\end{align*}
We deduct (\ref{2.4}) with $\phi=v_\e$ from the last identity,
\begin{equation}\label{2.7}
\begin{aligned}
\|\nabla
v_\e\|_{L_2(\Om)}^2+\iu&\|v_\e\|_{L_2(\Om)}^2=-(f,Wv_\e)_{L_2(\Om)}
+(\nabla u^{(\mu)},v_\e\nabla W)_{L_2(\Om)}
\\
& - (u^{(\mu)}\nabla W,\nabla v_\e)_{L_2(\Om)}
+\mu(u^{(\mu)},(1+W)v_\e)_{L_2(\G_-)}.
\end{aligned}
\end{equation}
We integrate by parts taking into account (\ref{2.1}), (\ref{2.5}),
and (\ref{2.6}),
\begin{align*}
(\nabla u^{(\mu)},& v_\e\nabla W)_{L_2(\Om)}-(u^{(\mu)}\nabla
W,\nabla v_\e)_{L_2(\Om)}
\\
&=(\nabla u^{(\mu)},v_\e\nabla W)_{L_2(\Om)}+\int\limits_{\G_\e}
u^{(\mu)}\frac{\p W}{\p x_2}\overline{v}_\e\di x_1+ (\mathrm{div}\,
u^{(\mu)}\nabla W,v_\e)_{L_2(\Om)}
\\
&=2(\nabla u^{(\mu)},v_\e\nabla W)_{L_2(\Om)}-\mu
(u^{(\mu)},v_\e)_{L_2(\G_\e)}+(u^{(\mu)}\D W,v_\e)_{L_2(\Om)},
\end{align*}
and
\begin{align*}
(\nabla u^{(\mu)},& v_\e\nabla W)_{L_2(\Om)}=(\nabla
u^{(\mu)},\nabla W v_\e)_{L_2(\Om)}- (\nabla u^{(\mu)},W \nabla
v_\e)_{L_2(\Om)}
\\
&\hphantom{v_\e\nabla W)_{L_2(\Om)}}=(f,Wv_\e)_{L_2(\Om)}
-\iu(u^{(\mu)},W v_\e)_{L_2(\Om)}
\\
&\hphantom{v_\e\nabla W)_{L_2(\Om)}=}-\mu (u^{(\mu)},W
v_\e)_{L_2(\Gp_-)}-(\nabla u^{(\mu)}, W\nabla v_\e)_{L_2(\Om)}.
\end{align*}
We substitute the obtained identities into (\ref{2.7}) and this
completes the proof.
\end{proof}

As it follows from (\ref{2.3}), to prove the smallness of $v_\e$ in
$\H^1(\Om)$-norm, it is sufficient to construct a function $W$
satisfying the hypothesis of Lemma~\ref{lm2.1} so that the
quantities $W$ and $\D W$ are small in certain sense. This is why we
 introduce $W$ as a formal asymptotic
solution to the equation
\begin{equation}\label{2.8}
\D W=0\quad \text{in}\quad \Om,
\end{equation}
satisfying (\ref{2.1}), (\ref{2.2}) and other assumptions of
Lemma~\ref{lm2.1}. To construct such solution, we shall employ the
asymptotic constructions from \cite{AsAn}, \cite{GDu99} based on the
method of matching of asymptotic expansions \cite{Il} and the
boundary layer method \cite{VL}. We also mention that similar
approach was used in \cite[Lm. 1]{Gzh01} for constructing a
different corrector.

First we construct $W$ formally, and after that we shall prove
rigourously all the required properties of the constructed
corrector. Denote $\xi=(\xi_1,\xi_2)=x\e^{-1}$,
$\vs^{(j)}=(\vs^{(j)}_1,\vs^{(j)}_2)$, $\vs_1^{(j)}=(\xi_1-\pi
j)\eta^{-1}$, $\vs^{(j)}_2=\xi_2\eta^{-1}$. Outside a small
neighborhood of $\g_\e$ we construct $W$ as a boundary layer
\begin{equation*}
W(x,\e,\mu)=\e\mu X(\xi).
\end{equation*}
We pass to $\xi$ in (\ref{2.8}) and let $\eta=0$ in the boundary
conditions. It yields a boundary value problem for $X$,
\begin{equation}\label{2.11}
\D_\xi X=0,\quad \xi_2>0,\qquad \frac{\p X}{\p\xi_2}=-1, \quad
\xi\in\G^0:=\{\xi: \xi_2=0\} 
\setminus\bigcup\limits_{j=-\infty}^{+\infty}
\{(\pi j,0)\},
\end{equation}
where the function $X$ should be $\pi$-periodic in $\xi_1$ and decay
exponentially as $\xi_2\to+\infty$. It was shown in \cite{GD98} that
the required solution to (\ref{2.11}) is
\begin{equation*}
X(\xi):=\RE\ln\sin (\xi_1+\iu \xi_2)+\ln 2-\xi_2.
\end{equation*}
It was also shown that
\begin{equation*}
X\in C^\infty(\{\xi: \xi_2\geqslant 0,\, \xi\not=(\pi j,0),\, j\in
\mathds{Z}\}),
\end{equation*}
and this function satisfies the differentiable asymptotics
\begin{equation}\label{2.12}
X(\xi)=\ln|\xi-(\pi j,0)|+\ln 2-\xi_2+\Odr(
\xi-(\pi
j,0)|^2),\quad \xi\to(\pi j,0),\quad j\in\mathds{Z}.
\end{equation}
In view of the last identity we rewrite the asymptotics for $X$ as
$\xi\to(\pi j,0)$ in terms of $\vs^{(j)}$,
\begin{equation}\label{2.13}
\begin{aligned}
\e\mu X(\xi)=&\e\mu \big(\ln|\xi-(\pi j,0)|+\ln 2-\xi_2\big)+
\Odr(
\e\mu|\xi-(\pi j,0)|^2)
\\
=&-1+\e\mu \big(\ln|\vs^{(j)}|+\ln
2\big)-\e\mu\eta\vs^{(j)}_2+\Odr(\e\mu\eta^2|\vs^{(j)}|^2).
\end{aligned}
\end{equation}
In accordance with the method of matching of asymptotic expansions
it follows from the obtained identities that in a small neighborhood
of each interval of $\g_\e$ we should construct $W$ as an internal
layer,
\begin{equation}\label{2.14}
W(x,\e,\mu)=-1+\e\mu W_{in}^{(j)}(\vs^{(j)}),
\end{equation}
where
\begin{equation}\label{2.15}
W_{in}^{(j)}(\vs^{(j)})=\ln|\vs^{(j)}|+\ln 2+o(1),\quad
\vs^{(j)}\to+\infty.
\end{equation}
We substitute (\ref{2.14}) into (\ref{2.8}), (\ref{2.1}), which
leads us to the boundary value problem for $W_{in}^{(j)}$,
\begin{equation}\label{2.16}
\begin{aligned}
&\D_{\vs^{(j)}} W_{in}^{(j)}=0,\quad \vs_2^{(j)}>0,
\\
&W_{in}^{(j)}=0,\quad \vs^{(j)}\in\g^1,\qquad \frac{\p
W_{in}^{(j)}}{\p\vs_2^{(j)}}=0,\quad \vs^{(j)}\in\G^1,
\\
&\g^1:=\{\vs:\, |\vs_1|<1,\,\vs_2=0\},\quad \G^1:=O\vs_1
\setminus\overline{\g}^1.
\end{aligned}
\end{equation}
It was shown in \cite{GD98} that the problem (\ref{2.15}),
(\ref{2.16}) is solvable and
\begin{equation}\label{2.17}
W_{in}^{(j)}(\vs^{(j)})=Y(\vs^{(j)}),\quad
Y(\vs):=\RE\ln(z+\sqrt{z^2-1}),\quad z=\vs_1+\iu \vs_2,
\end{equation}
where the branch of the root is fixed by the requirement
$\sqrt{1}=1$. It was also shown that
\begin{equation}\label{2.18}
Y(\vs)=\ln|\vs|+\ln 2+\Odr(|\vs|^{-2}),\quad \vs\to\infty.
\end{equation}
As it follows from the last asymptotics, the term
$-\e\mu\vs_2^{(j)}$ in (\ref{2.13}) is not matched with any term in
the boundary layer. At the same time, it was found in \cite{AsAn},
\cite{Gzh01}, \cite{GDu99} that such terms should be either matched
or cancelled out to obtain a reasonable estimate for the error
terms. This is also the case in our problem. In contrast to
\cite{AsAn}, \cite{Gzh01}, \cite{GDu99}, to solve this issue we
shall not construct additional terms in $W$, but employ a different
trick to solve this issue. Namely, we add the function $\e\mu\xi_2$
to the boundary layer and add also $-\mu x_2$ as the external
expansion. It changes neither equations nor boundary conditions for
$W$ but allows us to cancel out the mentioned term in (\ref{2.13}).
The final form of $W$ is as follows,
\begin{equation}\label{2.19}
\begin{aligned}
W(x,\e,\mu)=&-\mu x_2+\e\mu (X(\xi)+\xi_2)
\prod\limits_{j=-\infty}^{+\infty}
\Big(1-\chi_1\big(|\vs^{(j)}|\eta^\a\big)\Big)
\\
&+\sum\limits_{j=-\infty}^{+\infty} \chi_1\big(
|\vs^{(j)}|\eta^\a\big)\big(-1+\e\mu Y(\vs^{(j)})\big),
\end{aligned}
\end{equation}
where $\a\in(0,1)$ is a constant, which will be chosen later, and
$\chi_1=\chi_1(t)$ is an infinitely differentiable cut-off function
taking values in $[0,1]$, being one as $t<1$, and vanishing as
$t>3/2$. It can be easily seen that the sum and the product in the
definition of (\ref{2.19}) are always finite.

Let us check that the function $W$ satisfies the hypothesis of
Lemma~\ref{lm2.1}. By direct calculations we check that the function
$W$ is $\e\pi$-periodic w.r.t. $x_1$, belongs to
$C(\overline{\Om})\cap C^\infty(\overline{\Om}\setminus\{x: x_2=0,
\, x_1=\pm\e\eta+\e\pi n,\, n\in \mathds{Z}\})$, and satisfies
(\ref{2.2}). The boundary condition on $\g_\e$ in (\ref{2.1}) is
obviously satisfied. Taking into account the boundary conditions
(\ref{2.11}), (\ref{2.15}), we check
\begin{align*}
\frac{\p W}{\p x_2}\Big|_{x\in\G_\e}=&-\mu+\e\mu \left( \frac{\p
X}{\p\xi_2}\Big|_{\xi\in\G^0}+1\right)\prod\limits_{j=-\infty}^{+\infty}
\big(1-\chi_1(|\vs^{(j)}|\eta^\a)\big)
\\
&+\e\mu \sum\limits_{j=-\infty}^{+\infty} \chi_1\big(
|\vs^{(j)}|\eta^\a\big)\frac{\p
Y}{\p\vs_2^{(j)}}\Big|_{\vs^{(j)}\in\G^1}=-\mu,
\end{align*}
i.e., the boundary condition on $\G_\e$ in (\ref{2.1}) is satisfied,
too.

Let us calculate $\D W$. In order to do it, we employ the equations
in (\ref{2.11}), (\ref{2.15}),
\begin{equation}\label{2.20}
\begin{aligned}
&\D W(x)=2\sum\limits_{j=-\infty}^{+\infty} \nabla_x \chi_1\big(
|\vs^{(j)}|\eta^\a\big)\cdot \nabla_x W_{mat}^{(j)}(x,\e,\mu)
\\
&\hphantom{\D W(x)=}+\sum\limits_{j=-\infty}^{+\infty}
W_{mat}^{(j)}(x,\e,\mu)\D_x \chi_1\big( |\vs^{(j)}|\eta^\a\big),
\\
&W_{mat}^{(j)}(x,\e,\mu)=-1+\e\mu
\big(Y(\vs^{(j)})-X(\xi)-\xi_2\big).
\end{aligned}
\end{equation}
It follows from the definition of $\xi$, $\vs^{(j)}$, $\chi_1$, $X$,
$Y$, and the last formula that $\D W\in C^\infty(\overline{\Om})$.
Thus, we can apply Lemma~\ref{lm2.1}. To estimate the right hand
side of (\ref{2.3}) we need two auxiliary lemmas.

Given any $\d\in(0,\pi/2)$, denote
\begin{equation*}
\Om^\d:=\bigcup\limits_{j=-\infty}^{+\infty} \Om_j^\d, \quad
\Om_j^\d:=\{x: |x-(\pi j,0)|<\e\d\}\cap\Om.
\end{equation*}

\begin{lem}\label{lm2.2}
For any $u\in\H^1(\Om)$ and any $\d\in(0,\pi/4)$ the inequality
\begin{equation}\label{2.21}
\|u\|_{L_2(\Om^\d)}\leqslant C\d \big(|\ln\d|^{1/2}+1\big)
\|u\|_{\H^1(\Om)}
\end{equation}
holds true, where the constant $C$ is independent of $\d$ and $u$.
\end{lem}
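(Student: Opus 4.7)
The plan is to localize onto each disjoint half-disk $\Om_j^\d$ and apply a polar-coordinates weighted integration argument. Since $\d<\pi/4$, the half-disks $\Om_j^\d$ are pairwise disjoint, so $\|u\|_{L_2(\Om^\d)}^2=\sum_j\|u\|_{L_2(\Om_j^\d)}^2$. Enclose each $\Om_j^\d$ in a larger concentric half-disk $D_j\subset\Om$ of radius $R$ chosen so that the $D_j$ remain pairwise disjoint. The aim is to bound $\|u\|_{L_2(\Om_j^\d)}^2$ by the appropriate $\d$-dependent multiple of $\|u\|_{\H^1(D_j)}^2$ and then to sum.

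The pointwise estimate I would use comes from polar coordinates $(r,\tht)\in(0,R)\times(0,\pi)$ centered at the center of $\Om_j^\d$. For $0<r<s\leqslant R$, the fundamental theorem of calculus along the ray of angle $\tht$ combined with Cauchy--Schwarz applied to the factorization $dt=(dt/\sqrt t)\sqrt t$ gives
\[
|u(r,\tht)|^2\leqslant 2|u(s,\tht)|^2+2\ln(s/r)\int_0^R|\p_t u(t,\tht)|^2\,t\,dt.
\]
To replace the intermediate value $u(s,\tht)$ by a volume quantity, and thereby avoid invoking any trace inequality on an arc, I would average over $s\in(R/2,R)$: writing $ds\leqslant(2/R)\,s\,ds$ turns the $s$-integrated trace directly into $\tfrac{C}{R^2}\|u\|_{L_2(D_j)}^2$.

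Integrating the resulting pointwise bound against $r\,dr\,d\tht$ over $(0,\r)\times(0,\pi)$, with $\r=\e\d$, and using the elementary identity $\int_0^\r r\ln(R/r)\,dr=\tfrac{\r^2}{2}\ln(R/\r)+\tfrac{\r^2}{4}$, yields
\[
\|u\|_{L_2(\Om_j^\d)}^2\leqslant C\frac{\r^2}{R^2}\|u\|_{L_2(D_j)}^2+C\r^2\bigl(|\ln(R/\r)|+1\bigr)\|\nabla u\|_{L_2(D_j)}^2.
\]
Since $\r=\e\d$ with $\e\leqslant 1$ and $R$ is bounded below by a fixed positive constant, both coefficients are $\leqslant C\d^2(|\ln\d|+1)$. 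Summing over $j$ and using the disjointness of the $D_j$ gives $\|u\|_{L_2(\Om^\d)}^2\leqslant C\d^2(|\ln\d|+1)\|u\|_{\H^1(\Om)}^2$; extracting the square root together with $(a+b)^{1/2}\leqslant a^{1/2}+b^{1/2}$ produces the claimed bound.

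The only step deserving attention is the averaging just above: a direct use of a single trace on the arc $r=R$ would force invoking a trace inequality, whereas averaging over $s\in(R/2,R)$ reduces the offending term immediately to the volume $L_2$-norm on $D_j$, which is exactly what the disjoint-summation argument in the last step consumes. The logarithmic loss $|\ln\d|^{1/2}$ is inherent: it originates in the two-dimensional weighted integral $\int_0^\r r\ln(R/r)\,dr$, and reflects the borderline failure of the Sobolev embedding $\H^1\hookrightarrow L^\infty$ in dimension two.
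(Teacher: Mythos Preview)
Your polar-coordinates argument is sound in structure, and it is a genuinely different route from the paper's proof, which instead rescales $x=\e\xi$, multiplies by a cut-off supported in a fixed rectangle $\Pi_j^1$ in $\xi$-coordinates, and then invokes an external concentration inequality (\cite[Lm.~3.2]{OSaHY}) for $\H^1$ functions near a point. Your approach has the advantage of being entirely self-contained.

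There is, however, one concrete slip. The centers of the half-disks $\Om_j^\d$ lie at $(\e\pi j,0)$ (the statement in the paper has a typo; compare with the rescaling in the paper's own proof and with the definition of $\g_\e^\d$ in Lemma~\ref{lm2.3}), so they are spaced only $\e\pi$ apart. Hence the enclosing half-disks $D_j$ cannot have a radius $R$ bounded below by a fixed positive constant and remain pairwise disjoint; necessarily $R\leqslant \e\pi/2$.

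Fortunately, this does not damage the argument at all --- in fact it cleans it up. Take $R=c\e$ with any fixed $c\in(0,\pi/2]$ (say $c=\pi/2$; the $D_j$ then have disjoint interiors, which is all that summation requires). Then $\r/R=\d/c$ and $\ln(R/\r)=\ln(c/\d)$ depend only on $\d$, not on $\e$, and your displayed estimate becomes
\[
\|u\|_{L_2(\Om_j^\d)}^2\leqslant C\d^2\|u\|_{L_2(D_j)}^2
+C\e^2\d^2\bigl(|\ln\d|+1\bigr)\|\nabla u\|_{L_2(D_j)}^2,
\]
which after summing over $j$ gives exactly the claimed bound (indeed with an extra factor $\e^2$ to spare on the gradient term). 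The condition $\d<\pi/4$ then guarantees $\r<R/2$, so your averaging over $s\in(R/2,R)$ remains legitimate. With this single adjustment the proof is complete.
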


\begin{proof}
We begin with the formulas
\begin{align}
&\|u\|_{L_2(\Om^\d)}^2=\sum\limits_{j=-\infty}^{+\infty}
\|u\|_{L_2(\Om_j^\d)}^2, \label{2.22}
\\
&\|u\|_{L_2(\Om_j^\d)}^2=\int\limits_{\Om_j^\d} |u(x)|^2\di x=\e^2
\int\limits_{|\xi-(\pi j,0)|<\d,\,\xi_2>0} |u(\e\xi)|^2\di
\xi\nonumber
\\
&\hphantom{\|u\|_{L_2(\Om_j^\d)}^2}=\e^2 \int\limits_{|\xi-(\pi
j,0)|<\d,\,\xi_2>0}|\chi_2(\xi-(\pi j,0))u(\e\xi)|^2\di\xi,
\nonumber
\end{align}
where $\chi_2=\chi_2(\xi)$ is an infinitely differentiable function
being one as $|\xi|<\d$ and vanishing as $|\xi|>\pi/3$. We also
suppose that the functions $\chi_2$, $\chi_2'$ are bounded uniformly
in $\xi$ and $\d$. Hence,
\begin{equation*}
\chi_2(\cdot-(\pi j,0))u\in \Ho^1(\Pi_j^1,\p\Pi_j^1),\quad
\Pi_j^1:=\left\{\xi:\,|\xi_1-\pi
j|<\frac{\pi}{2},\,0<\xi_2<1\right\}.
\end{equation*}
By \cite[Lm. 3.2]{OSaHY}, we obtain
\begin{align*}
\e^2&\int\limits_{|\xi-(\pi j,0)|<\d,\,\xi_2>0} |\chi_2
u|^2\di\xi\leqslant C\e^2 \d^2(|\ln\d|+1) \int\limits_{\Pi_j^1}
\big(|\nabla_\xi \chi_2 u|^2+|\chi_2 u|^2\big)\di \xi
\\
&\leqslant C\e^2\d^2(|\ln\d|+1) \big(\|\nabla_\xi
u\|_{L_2(\Pi_j^1)}+ \|u\|_{L_2(\Pi_j^1)}\big)
\\
&\leqslant C\d^2(|\ln\d|+1) \|u\|_{\H^1(\{x: |x_1-\e\pi j|<\e\pi/2,
0<x_2<\pi\})}^2,
\end{align*}
where the constants $C$ are independent of $j$, $\e$, $\d$, $\mu$,
and $u$. We substitute these inequalities into (\ref{2.22}) and
arrive at (\ref{2.21}).
\end{proof}

\begin{lem}\label{lm2.3}
For any $u\in \H^2(\Om)$ and any $\d\in(0,\pi/2)$ the inequality
\begin{equation*}
\|u\|_{L_2(\g_\e^\d)}\leqslant C\d^{1/2}\|u\|_{\H^2(\Om)}, \quad
\g_\e^\d:=\{x: |x_1-\e\pi j|<\e\d, \, x_2=0\},
\end{equation*}
holds true, where the constant $C$ is independent of $\e$, $\d$, and
$u$.
\end{lem}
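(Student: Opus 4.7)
The plan is to localize in each periodicity cell in $x_1$, rescale to a fixed reference rectangle, and then apply a standard two-dimensional Sobolev embedding. The crucial point is that the rescaling must act only in the $x_1$ direction, where the small length scale $\e$ appears, while the $x_2$ direction (of fixed width $\pi$) is left untouched.

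First I would decompose the strip as $\Om=\bigcup_{j\in\mathds{Z}}\Pi_j$ (up to sets of measure zero), where $\Pi_j:=\{x:|x_1-\e\pi j|<\e\pi/2,\ 0<x_2<\pi\}$ is the $j$-th periodicity cell. Writing $I_j:=\{x:|x_1-\e\pi j|<\e\d,\ x_2=0\}\subset\p\Pi_j\cap\G_-$, the assumption $\d<\pi/2$ makes the intervals $I_j$ pairwise disjoint, so that $\g_\e^\d=\bigcup_j I_j$ and
\begin{equation*}
\|u\|_{L_2(\g_\e^\d)}^2=\sum_j\|u\|_{L_2(I_j)}^2,\qquad \|u\|_{\H^2(\Om)}^2=\sum_j\|u\|_{\H^2(\Pi_j)}^2.
\end{equation*}
It therefore suffices to prove the local bound $\|u\|_{L_2(I_j)}^2\leqslant C\d\|u\|_{\H^2(\Pi_j)}^2$ with $C$ independent of $\e$, $\d$, $j$, since summation over $j$ then delivers the lemma.

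Next I would introduce the anisotropic rescaling $v_j(\eta_1,\eta_2):=u(\e\pi j+\e\eta_1,\eta_2)$, which maps $\Pi_j$ onto the fixed rectangle $\Pi:=(-\pi/2,\pi/2)\times(0,\pi)$. Since $\Pi$ is a bounded Lipschitz domain in $\mathds{R}^2$, the Sobolev embedding $\H^2(\Pi)\hookrightarrow C^0(\overline{\Pi})$ gives $\|v_j\|_{L_\infty(\Pi)}\leqslant C\|v_j\|_{\H^2(\Pi)}$ with a universal constant. Combining this with the trivial estimate $\int_{-\d}^{\d}|v_j(\eta_1,0)|^2\di\eta_1\leqslant 2\d\|v_j\|_{L_\infty(\Pi)}^2$ yields
\begin{equation*}
\int_{-\d}^{\d}|v_j(\eta_1,0)|^2\di\eta_1\leqslant 2C\d\,\|v_j\|_{\H^2(\Pi)}^2.
\end{equation*}

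Finally I would undo the rescaling. A direct change of variables shows $\|u\|_{L_2(I_j)}^2=\e\int_{-\d}^{\d}|v_j(\eta_1,0)|^2\di\eta_1$, and the anisotropic scalings of the various partial derivatives yield $\|v_j\|_{\H^2(\Pi)}^2\leqslant C\e^{-1}\|u\|_{\H^2(\Pi_j)}^2$ for $\e\leqslant 1$; the worst contributions come from the zero-order term and from pure $x_2$-derivatives, both scaling like $\e^{-1}$, while each $x_1$-differentiation brings in an extra factor $\e^2$ that is harmless. Multiplying the two displayed estimates, the $\e$ and $\e^{-1}$ factors cancel and one arrives at the local bound, after which summation over $j$ concludes the proof. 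No genuine obstacle arises here; the only delicate point is tracking the anisotropic scaling factors so that no residual power of $\e$ survives on the right-hand side.
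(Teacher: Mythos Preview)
Your argument is correct and complete: the anisotropic rescaling in $x_1$ onto the fixed rectangle $\Pi$, followed by the two-dimensional embedding $\H^2(\Pi)\hookrightarrow C(\overline{\Pi})$ and the trivial bound $\int_{-\d}^{\d}|v_j|^2\leqslant 2\d\|v_j\|_{L_\infty}^2$, produces exactly the factor $\d$ needed, and your scaling bookkeeping is right (the $\e$ from the boundary measure cancels the worst factor $\e^{-1}$ coming from the zero-order and pure $x_2$-derivative terms).

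The paper proceeds differently: it stays on the boundary $\G_-$ and works in one dimension. For each cell it writes $\chi_2(\xi_1-\pi j)u(x_1,0)$ as the integral of its $x_1$-derivative from the left endpoint of $\g_{\e,j}$, applies Cauchy--Schwarz to obtain a pointwise bound of the form $C\big(\e\,\|\p_{x_1}u\|_{L_2(\g_{\e,j})}^2+\e^{-1}\|u\|_{L_2(\g_{\e,j})}^2\big)$, and then integrates over the short interval of length $2\e\d$; this yields $\|u\|_{L_2(\g_{\e,j}^\d)}^2\leqslant C\d\big(\|\p_{x_1}u\|_{L_2(\g_{\e,j})}^2+\|u\|_{L_2(\g_{\e,j})}^2\big)$, and a final summation together with the trace embedding $\H^2(\Om)\hookrightarrow\H^1(\G_-)$ finishes. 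So the paper uses only a one-dimensional fundamental-theorem-of-calculus estimate on the trace plus the standard trace theorem, whereas you invoke the stronger two-dimensional embedding $\H^2\hookrightarrow C^0$. Your route is slightly shorter and more conceptual; the paper's is more elementary and makes explicit that only $\H^1$-control of the trace is really needed.
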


\begin{proof}
It is clear that
\begin{equation}
\|u\|_{L_2(\g_\e^\d)}=\sum\limits_{j=-\infty}^{+\infty}
\|u\|_{L_2(\g_{\e,j}^\d)},\quad \g_{\e,j}^\d:=\left\{x: |x_1-\e\pi
j|<\e\d,\, x_2=0\right\}. \label{2.24}
\end{equation}
It follows from the definition of $\chi_2$ (see the proof of
Lemma~\ref{lm2.2}) that
\begin{equation}\label{2.25}
\|u\|_{L_2(\g_{\e,j}^\d)}^2=\int\limits_{\g_{\e,j}^\d}
\left|\chi_2\left(\frac{x_1}{\e}-\pi j\right)u(x_1,0)\right|^2 \di
x_1.
\end{equation}
Since
\begin{equation*}
\chi_2\left(\frac{x_1}{\e}-\pi j\right)u(x_1,0)=\int\limits_{\e\pi
j-\frac{\e\pi}{2}}^{x_1} \frac{\p}{\p x_1} \left(
\chi_2\left(\frac{x_1}{\e}-\pi j\right)u(x_1,0) \right)\di x_1,
\end{equation*}
by the Cauchy-Schwartz inequality we get
\begin{gather*}
 \frac{\p}{\p x_1} \left( \chi_2\left(\frac{x_1}{\e}-\pi
j\right)u(x_1,0) \right)= \chi_2\left(\frac{x_1}{\e}-\pi
j\right)\frac{\p u}{\p x_1}(x_1,0)+\e^{-1}
\chi_2'\left(\frac{x_1}{\e}-\pi j\right)u(x_1),
\\
\left|\chi_2\left(\frac{x_1}{\e}-\pi
j,0\right)u(x_1,0)\right|^2\leqslant C \left(\e
\int\limits_{\g_{\e,j}} \left|\frac{\p u}{\p x_1}(x_1,0)\right|^2\di
x_1+\e^{-1} \int\limits_{\g_{\e,j}} |u(x_1,0)|^2\di x_1\right),
\\
\g_{\e,j}:=\left\{x: |x_1-\e\pi j|<\frac{\e\pi}{2},\, x_2=0\right\},
\end{gather*}
where the constants $C$ are independent of $j$, $\e$, $\d$, and $u$.
The last estimate and (\ref{2.25}) imply
\begin{equation*}
\|u\|_{L_2(\g_{\e,j}^\d)}^2\leqslant C\d \left( \Big\|\frac{\p u}{\p
x_1}\Big\|_{L_2(\g_{\e,j})}^2+\|u\|_{L_2(\g_{\e,j})}^2 \right),
\end{equation*}
where the constant $C$ is independent of $j$, $\e$, $\d$, and $u$.
We substitute the obtained inequality into (\ref{2.24}) and employ
the standard embedding of $\H^2(\Om)$ into $\H^1(\G_-)$ that
completes the proof.
\end{proof}

\begin{lem}\label{lm2.4}
The estimates
\begin{align}
&|\D W|\leqslant C\e^{-1}\mu(1+\eta^{4\a-2}), && x\in\Om,
\label{2.26}
\\
&|W|\leqslant C\e\mu(|\ln\d|+1), && x\in\Om\setminus\Om^\d, &&
\frac{3}{2}\eta^\a<\d<\frac{\pi}{2}, \label{2.27}
\\
&|W|\leqslant C, && x\in\Om^\d, &&
\frac{3}{2}\eta^\a<\d<\frac{\pi}{2}, \label{2.28}
\end{align}
are valid, where the constants $C$ are independent of $\e$, $\mu$,
$\eta$, $\d$, and $x$.
\end{lem}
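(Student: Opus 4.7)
The plan is to treat the three estimates separately, exploiting the construction of $W$ in (\ref{2.19}) as a three-region object: the external layer $-\mu x_2 + \e\mu(X(\xi)+\xi_2)$ away from $\g_\e$, the internal layers $-1+\e\mu Y(\vs^{(j)})$ in small neighborhoods of the centers $(\e\pi j,0)$, and thin matching annuli $\eta^{-\a}\leqslant |\vs^{(j)}|\leqslant \frac{3}{2}\eta^{-\a}$ on which the cut-offs $\chi_1$ interpolate between them. Throughout, I exploit the defining identity $\e\mu\ln\eta=-1$ coming from (\ref{1.6}) and the fact that for each $x$ at most one of the cut-offs $\chi_1(|\vs^{(j)}|\eta^\a)$ is nonzero, so every sum reduces to a single term.

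For (\ref{2.26}) I start from the formula (\ref{2.20}). On the support of a single $\chi_1$-derivative, the chain rule yields $|\nabla_x\chi_1(|\vs^{(j)}|\eta^\a)|\leqslant C\e^{-1}\eta^{\a-1}$ and $|\D_x\chi_1(|\vs^{(j)}|\eta^\a)|\leqslant C\e^{-2}\eta^{2\a-2}$. The crucial input is that $W_{mat}^{(j)}$ is \emph{small} on this annulus: plugging the asymptotics (\ref{2.12}) for $X$ and (\ref{2.18}) for $Y$ into $W_{mat}^{(j)}=-1+\e\mu(Y(\vs^{(j)})-X(\xi)-\xi_2)$, the two logarithms combine as $\ln|\vs^{(j)}|-\ln|\xi-(\pi j,0)|=-\ln\eta$, and the relation $\e\mu\ln\eta=-1$ then cancels the constant $-1$; the remainder is bounded by $C\e\mu(\eta^{2\a}+\eta^{2(1-\a)})$, and differentiating these asymptotics gives $|\nabla_x W_{mat}^{(j)}|\leqslant C\mu(\eta^{3\a-1}+\eta^{1-\a})$. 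Multiplying and bounding each of the two terms in (\ref{2.20}) by $C\e^{-1}\mu(1+\eta^{4\a-2})$ yields (\ref{2.26}).

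For (\ref{2.27}) and (\ref{2.28}) I observe the fortunate cancellation $-\mu x_2+\e\mu\xi_2=0$, since $\xi_2=x_2/\e$, so that the external layer reduces to $\e\mu X(\xi)$. Outside $\Om^\d$, and outside the matching/internal-layer regions, $W=\e\mu X(\xi)$; the explicit representation $X(\xi)=\RE\ln\sin(\xi_1+\iu\xi_2)+\ln 2-\xi_2$ together with the uniform lower bound $|\sin(\xi_1+\iu\xi_2)|\geqslant c\d$ (valid because the $\xi$-distance to every lattice point $(\pi j,0)$ exceeds $\d$) produces $|X(\xi)|\leqslant C(|\ln\d|+1)$, proving (\ref{2.27}). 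For (\ref{2.28}), inside $\Om^\d$ I split into subregions. Where $\chi_1\equiv 1$ one has $W=-1-\mu x_2+\e\mu Y(\vs^{(j)})$, and the bound $|Y(\vs)|\leqslant C(1+|\ln|\vs||)$ combined with $|\vs^{(j)}|\leqslant\eta^{-\a}$ and $\e\mu|\ln\eta|=1$ produces $|W|\leqslant C$; in the matching annulus $W$ equals the convex combination of the two bounded layers plus a contribution of $\chi_1 W_{mat}^{(j)}$ which is $O(\e\mu(\eta^{2\a}+\eta^{2(1-\a)}))$ by the analysis of (\ref{2.26}); where $\chi_1=0$ but still in $\Om^\d$, $W=\e\mu X(\xi)$ with $|X|\leqslant C(1+|\ln\eta^{1-\a}|)$, again giving $|W|\leqslant C$ since $\e\mu|\ln\eta|=1$.

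The main technical point is the cancellation argument in the proof of (\ref{2.26}): one must check that the formal matching of the external and internal asymptotic series, designed precisely so that the $-1$ prescribed on the Dirichlet patches is reproduced by the external layer at distance $\sim\eta^{1-\a}$ via the identity $\e\mu\ln\eta=-1$, is sharp enough to supply \emph{both} an $\eta$-power saving in $W_{mat}^{(j)}$ and a matching saving in $\nabla W_{mat}^{(j)}$ — exactly enough to balance the blow-up of $\chi_1$-derivatives on the thin matching annulus. Once this verification is done, the rest is elementary pointwise analysis of the explicit harmonic functions $X$ and $Y$.
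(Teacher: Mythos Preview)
Your proof is correct and follows essentially the same approach as the paper's: the same localization to the matching annulus for (\ref{2.26}) using (\ref{2.20}), the same asymptotics (\ref{2.12}) and (\ref{2.18}) combined with the identity $\e\mu\ln\eta=-1$ to control $W_{mat}^{(j)}$ and $\nabla_x W_{mat}^{(j)}$, and the same pointwise bounds on $X$ and $Y$ for (\ref{2.27}), (\ref{2.28}). Your explicit remark that $-\mu x_2+\e\mu\xi_2=0$ is a clean way to package what the paper leaves implicit, but the argument is otherwise the paper's own.
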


\begin{proof}
Since $W$ is $\e\pi$-periodic w.r.t. $x_1$, it is sufficient to
prove the estimates only for $|x_1|<\e\pi/2$, $0<x_2<\pi$. It
follows directly from the definition of $X$, $Y$, and (\ref{2.15}),
(\ref{2.18}) that for any $\d\in(0,\pi/2)$
\begin{align*}
&|X(\xi)|\leqslant C\big(|\ln\d|+1\big), \quad
|\xi_1|<\frac{\pi}{2}, \quad \xi_2>0, \quad |\xi|\geqslant \d,
\\
&|Y(\vs)|\leqslant C \big(|\ln \d\eta^{-1}|+1\big)\leqslant C
\big(|\ln\d|+\e^{-1}\mu^{-1}\big),\quad |\vs|\leqslant \d\eta^{-1},
\end{align*}
where the constants $C$ are independent of $\e$, $\mu$, $\eta$,
$\d$, and $x$. These estimates and (\ref{2.19}) imply (\ref{2.27}),
(\ref{2.28}).

It follows from the definition of $\chi_1$ that $\D W$ is non-zero
only as
\begin{equation*}
\eta^{-\a}<|\vs^{(1)}|<\frac{3}{2}\eta^{-\a}.
\end{equation*}
For the corresponding values of $x$ due to (\ref{2.15}),
(\ref{2.17}) the differentiable asymptotics
\begin{equation*}
W_{mat}^{(1)}(x,\e,\mu)=\Odr\big(\e\mu(|\vs^{(1)}|^{-2}+|\xi|^2)\big),
\quad \eta^{-\a}<|\vs^{(1)}|<\frac{3}{2}\eta^{-\a},\quad
\eta^{1-\a}<|\xi|<\frac{3}{2}\eta^{1-\a},
\end{equation*}
holds true. Hence, for the same values of $\xi$ and $\vs^{(1)}$
\begin{align*}
&W_{mat}^{(1)}=\Odr\big(\e\mu (\eta^{2\a}+\eta^{2-2\a}) \big),
\\
&\nabla_x
W_{mat}^{(1)}=\Odr\big(\mu(\eta^{-1}|\vs^{(1)}|^{-3}+|\xi|)
\big)=\Odr\big(\mu(\eta^{1-\a}+\eta^{3\a-1})\big).
\end{align*}
Substituting the identities obtained into (\ref{2.20}) and taking
into account the relations
\begin{equation*}
\nabla_x \chi_1\big(
|\vs^{(j)}|\eta^\a\big)=\Odr(\e^{-1}\eta^{\a-1}),\quad \D_x
\chi_1\big( |\vs^{(j)}|\eta^\a\big)=\Odr(\e^{-2}\eta^{2\a-2}),
\end{equation*}
we arrive at (\ref{2.26}).
\end{proof}

Let us estimate the right hand side of (\ref{2.3}). We have
\begin{align}
&|(f,Wv_\e)_{L_2(\Om)}|\leqslant \|f\|_{L_2(\Om)}\|W
v_\e\|_{L_2(\Om)},\nonumber
\\
&\|W v_\e\|_{L_2(\Om)}^2=\|W v_\e\|_{L_2(\Om\setminus\Om^\d)}^2+ \|W
v_\e\|_{L_2(\Om^\d)}^2.\label{2.28a}
\end{align}
Let $\d\in\left(\frac{3}{2}\eta^\a,\frac{\pi}{2}\right)$. Applying
Lemma~\ref{lm2.2} and using (\ref{2.27}), (\ref{2.28}), we have
\begin{equation}
\begin{aligned}
&\|v_\e W\|_{L_2(\Om\setminus\Om^\d)}^2\leqslant C
\e^2\mu^2(|\ln\d|^2+1)\|v_\e\|_{L_2(\Om\setminus\Om^\d)}^2,
\\
&\|v_\e W\|_{L_2(\Om^\d)}^2\leqslant C\d^2
(|\ln\d|+1)\|v_\e\|_{\H^1(\Om)}^2.
\end{aligned}\label{2.28b}
\end{equation}
Here and till the end of this section we indicate by $C$ various
non-essential constants independent of $\e$, $\mu$, $\eta$, $\d$,
$x$, $v_\e$, $u^{(\mu)}$, and $f$. The inequalities (\ref{2.28b})
yield
\begin{equation}\label{2.29}
|(f,v_\e W)_{L_2(\Om)}|\leqslant
C\big(\e\mu|\ln\d|+\d|\ln\d|^{1/2}+\d\big)
\|v_\e\|_{\H^1(\Om)}\|f\|_{L_2(\Om)}.
\end{equation}

It follows from the definition of $u^{(\mu)}$ that
\begin{equation}\label{2.30}
\|u^{(\mu)}\|_{\H^2(\Om)}\leqslant C\|f\|_{L_2(\Om)}.
\end{equation}
Taking into account this inequality, we proceed in the same way as
in (\ref{2.28a}), (\ref{2.28b}), (\ref{2.29}),
\begin{align}
&
\begin{aligned}
\|u^{(\mu)}W\|_{L_2(\Om)} &\leqslant
C(\e\mu|\ln\d|+\d|\ln\d|^{1/2}+\d)\|u^{(\mu)}\|_{\H^1(\Om)}
\\
&\leqslant C(\e\mu|\ln\d|+\d|\ln\d|^{1/2}+\d)\|f\|_{L_2(\Om)},
\end{aligned}\label{2.36a}
\\
&
\begin{aligned}
\|W\nabla u^{(\mu)}\|_{L_2(\Om)} &\leqslant
C(\e\mu|\ln\d|+\d|\ln\d|^{1/2}+\d)\|u^{(\mu)}\|_{\H^2(\Om)}
\\
&\leqslant C(\e\mu|\ln\d|+\d|\ln\d|^{1/2}+\d)\|f\|_{L_2(\Om)},
\end{aligned}\label{2.37a}
\\
&
\begin{aligned}
\big|(u^{(\mu)},W v_\e)_{L_2(\Om)}+& (\nabla u^{(\mu)},W\nabla
v_\e)_{L_2(\Om)}\big|
\\
&\leqslant \|u^{(\mu)}W\|_{L_2(\Om)}\|v_\e\|_{L_2(\Om)} +\|W\nabla
u^{(\mu)}\|_{L_2(\Om)}\|\nabla v_\e\|_{L_2(\Om)}
\\
&\leqslant C(\e\mu|\ln\d|+\d|\ln\d|^{1/2}+\d)
\|f\|_{L_2(\Om)}\|v_\e\|_{\H^1(\Om)}.
\end{aligned}\label{2.34a}
\end{align}

Employing (\ref{2.26}) instead of (\ref{2.27}), (\ref{2.28}), and
applying then Lemma~\ref{lm2.2} with $\d=\eta^\a$, we get
\begin{equation}\label{2.33}
\begin{aligned}
\|u^{(\mu)}\D W\|_{L_2(\Om)}=\|u^{(\mu)}\D
W\|_{L_2(\Om_{2\eta^\a})}&\leqslant C
\eta^\a\e^{-3/2}\mu^{1/2}(1+\eta^{4\a-2}) \|u^{(\mu)}\|_{\H^1(\Om)}
\\
&\leqslant C \eta^\a\e^{-3/2}\mu^{1/2}(1+\eta^{4\a-2})
\|f\|_{L_2(\Om)}.
\end{aligned}
\end{equation}
Using (\ref{2.27}), (\ref{2.28}), (\ref{2.29}), Lemma~\ref{lm2.3}
with $\d=\widetilde{\d}\in(\eta^\a,\pi/2)$, the embedding of
$\H^2(\Om)$ in $\H^1(\G_-)$, and proceeding as in (\ref{2.28a}),
(\ref{2.28b}), (\ref{2.29}), we obtain
\begin{align}
&\big|(u^{(\mu)},W v_\e)_{L_2(\G_\e)}\big|\leqslant
\|u^{(\mu)}W\|_{L_2(\G_\e)}\|v_\e\|_{L_2(\G_-)}\leqslant C
\|u^{(\mu)}W\|_{L_2(\G_\e)}\|v_\e\|_{\H^1(\Om)},\nonumber
\\
&
\begin{aligned}
&\|u^{(\mu)}W\|_{L_2(\G_\e)}^2=
\|u^{(\mu)}W\|_{L_2(\G_\e\setminus\g_\e^{\widetilde{\d}})}^2
+\|u^{(\mu)}W\|_{L_2(\g_\e^{\widetilde{\d}})}^2
\\
&\hphantom{\|u^{(\mu)}W\|_{L_2(\G_\e)}^2} \leqslant
C\e^2\mu^2(|\ln\widetilde{\d}|^2+1) \|u^{(\mu)}\|_{L_2(\G_\e)}^2+
C\widetilde{\d} \|u^{(\mu)}\|_{\H^2(\Om)}^2
\\
&\hphantom{\|u^{(\mu)}W\|_{L_2(\G_\e)}^2}\leqslant
C\big(\widetilde{\d}+\e^2\mu^2(|\ln\widetilde{\d}|^2+1)\big)
\|f\|_{L_2(\Om)}^2,
\end{aligned}\label{2.35a}
\\
&\big|(u^{(\mu)},W v_\e)_{L_2(\G_\e)}\big|\leqslant C
\big(\widetilde{\d}^{1/2}+\e\mu(|\ln\widetilde{\d}|+1)\big)
\|f\|_{L_2(\Om)},\nonumber
\end{align}
Let $\a\in(1/2,1)$. The last obtained estimate, (\ref{2.29}),
(\ref{2.34a}), (\ref{2.33}), and (\ref{2.3}) yield
\begin{equation*}
\|v_\e\|_{\H^1(\Om)}^2\leqslant C (\d|\ln\d|^{1/2}+\e\mu|\ln\d|
+\e\mu^2|\ln\widetilde{\d}|+\mu\widetilde{\d}^{1/2})\|f\|_{L_2(\Om)}
\|v_\e\|_{\H^1(\Om)},
\end{equation*}
and it is assumed here that
\begin{equation*}
\eta^\a<\d<\pi/2,\quad \eta^\a<\widetilde{\d}<\pi/2, \quad
\d=\d(\e)\to+0,\quad \widetilde{\d}=\widetilde{\d}(\e)\to+0
\quad\text{as}\quad \e\to+0.
\end{equation*}
Thus, taking $\d=\e\mu$, $\widetilde{\d}=\e^2\mu^2$, we get
\begin{equation*}
\|v_\e\|_{\H^1(\Om)}\leqslant C\e\mu |\ln\e\mu|\|f\|_{L_2(\Om)},
\end{equation*}
and it proves (\ref{1.9}).

We take $\d=\e\mu$ in (\ref{2.36a}) and employ (\ref{1.9}),
\begin{align*}
\|(\mathcal{H}_\e-&\iu)^{-1}f-(\mathcal{H}^{(\mu)}-\iu)^{-1}f
\|_{L_2(\Om)}=\|u_\e-u^{(\mu)}\|_{L_2(\Om)}
\\
&\leqslant \|u_\e-(1+W)u^{(\mu)}\|_{L_2(\Om)}+\|u^{(\mu)}
W\|_{L_2(\Om)}
\\
&\leqslant C\e\mu|\ln\e\mu|\|f\|_{L_2(\Om)},
\end{align*}
which proves (\ref{1.10}).

\begin{lem}\label{lm2.5}
The estimate
\begin{equation}\label{2.35}
\|\nabla(u^{(\mu)}W)\|_{L_2(\Om)}\leqslant C
\mu^{1/2}\|f\|_{L_2(\Om)}
\end{equation}
holds true.
\end{lem}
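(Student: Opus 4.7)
The plan is to split via the Leibniz rule,
\[
\nabla(u^{(\mu)}W) = W\nabla u^{(\mu)} + u^{(\mu)}\nabla W,
\]
and bound the two summands separately. For $W\nabla u^{(\mu)}$, I would apply the already established inequality (\ref{2.37a}) with the choice $\d=\e\mu$. This choice is admissible because (\ref{1.6}) gives $\eta=\exp(-1/(\e\mu))$, so $\eta^\a$ is exponentially smaller than $\e\mu$ for any $\a\in(1/2,1)$, and the constraint $\d>\tfrac{3}{2}\eta^\a$ is satisfied once $\e$ is small enough. The resulting bound
\[
\|W\nabla u^{(\mu)}\|_{L_2(\Om)}\leqslant C\e\mu|\ln\e\mu|\,\|f\|_{L_2(\Om)}
\]
is $o(\mu^{1/2}\|f\|_{L_2(\Om)})$, since $\e\mu|\ln\e\mu|/\mu^{1/2}=\sqrt{\e/|\ln\eta|}\,\ln|\ln\eta|\to 0$ by (\ref{1.5}).

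For $u^{(\mu)}\nabla W$, I would write $W=-\mu x_2+W_{\mathrm{bl}}$, where $W_{\mathrm{bl}}:=W+\mu x_2$ isolates the boundary-layer contributions coming from (\ref{2.19}). The linear part is trivial: $\nabla(-\mu x_2)=-\mu e_2$ produces $\|\mu u^{(\mu)}\|_{L_2(\Om)}\leqslant C\mu\|f\|_{L_2(\Om)}$, which is already of the claimed order.

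The hard part will be estimating $\|u^{(\mu)}\nabla W_{\mathrm{bl}}\|_{L_2(\Om)}$, because $\nabla W_{\mathrm{bl}}$ carries two kinds of concentrated singularities: a logarithmic one of size $\mu/|\xi-(\pi j,0)|$ coming from $\mu\nabla_\xi(X+\xi_2)=\mu\cot(\xi_1+\iu\xi_2)$ and localized near the points $x=(\e\pi j,0)$; and an $r_\pm^{-1/2}$ singularity with prefactor $\mu\sqrt{\e/\eta}$ coming from $\e\mu\nabla_x Y(\vs^{(j)})$ at the endpoints of $\g_\e$. I would combine two ingredients to absorb them. First, the exponential decay of $\cot(\xi_1+\iu\xi_2)$ in $\xi_2$ concentrates $\nabla W_{\mathrm{bl}}$ in a layer of width $O(\e)$ near $\G_-$; coupled with the pointwise identity $u^{(\mu)}(x_1,x_2)=u^{(\mu)}(x_1,0)+\int_0^{x_2}\partial_{x_2}u^{(\mu)}(x_1,s)\di s$ and the trace theorem, this yields the weighted bound
\[
\int_\Om e^{-4x_2/\e}|u^{(\mu)}|^2\di x\leqslant C\e\,\|u^{(\mu)}\|_{\H^1(\Om)}^2\leqslant C\e\|f\|_{L_2(\Om)}^2,
\]
which takes care of the logarithmic singularity. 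Second, the Sobolev embedding $\H^2\hookrightarrow C^0$ in two dimensions gives $\sup_{\text{cell }j}|u^{(\mu)}|^2\leqslant C\|u^{(\mu)}\|_{\H^2(\text{cell }j)}^2$, and summing over periods and invoking (\ref{2.30}) produces $\sum_j\sup_{\text{cell }j}|u^{(\mu)}|^2\leqslant C\|f\|_{L_2(\Om)}^2$, which localizes $u^{(\mu)}$ sufficiently to absorb the $r_\pm^{-1/2}$ singularity (integrable in two dimensions). Using the identity $-\e\ln\eta=1/\mu$ from (\ref{1.6}) to convert logarithmic divergences $|\ln\eta^{1-\a}|$ into $(1-\a)/(\e\mu)$, direct computation shows that both singular regions contribute at most $O(\mu\e)\|f\|_{L_2(\Om)}^2$ to $\|u^{(\mu)}\nabla W_{\mathrm{bl}}\|_{L_2(\Om)}^2$, well below $\mu\|f\|_{L_2(\Om)}^2$. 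Assembling the three pieces yields the claimed estimate $\|\nabla(u^{(\mu)}W)\|_{L_2(\Om)}\leqslant C\mu^{1/2}\|f\|_{L_2(\Om)}$.
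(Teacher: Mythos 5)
Your Leibniz splitting and the treatment of $W\nabla u^{(\mu)}$ via (\ref{2.37a}) with $\d=\e\mu$ are fine (that is exactly (\ref{2.40}) in the paper), and so are the linear part $-\mu x_2$ and the weighted estimate $\int_\Om \E^{-4x_2/\e}|u^{(\mu)}|^2\di x\leqslant C\e\|u^{(\mu)}\|^2_{\H^1(\Om)}$. The gap is in the bookkeeping for $u^{(\mu)}\nabla W_{\mathrm{bl}}$. The inequality $\sum_j\sup_{\text{cell }j}|u^{(\mu)}|^2\leqslant C\|f\|^2_{L_2(\Om)}$ is false: there are of order $\e^{-1}$ cells per unit length of $\G_-$, each local supremum is generically comparable to $\sup_\Om|u^{(\mu)}|$, and the Sobolev constant on a cell of width $\e\pi$ itself degenerates. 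The correct statement, obtained from a one-dimensional Sobolev bound in $x_1$ on intervals of length $\e\pi$ applied to the trace, is $\sum_j\sup_{\text{cell }j}|u^{(\mu)}|^2\leqslant C\e^{-1}\|u^{(\mu)}\|^2_{\H^2(\Om)}\leqslant C\e^{-1}\|f\|^2_{L_2(\Om)}$. This extra $\e^{-1}$ changes your final tally. In the matching annulus $\e\eta\lesssim|x-(\e\pi j,0)|\lesssim\e\eta^{1-\a}$ one has $|\nabla W|\sim\e\mu/r$, so each cell contributes about $\mu^2\e^2\,\a|\ln\eta|\,\sup_j|u^{(\mu)}|^2=\a\e\mu\sup_j|u^{(\mu)}|^2$ (using $\e\mu|\ln\eta|=1$ from (\ref{1.6})), and the sum over $j$ is of order $\mu\|f\|^2_{L_2(\Om)}$, not $\e\mu\|f\|^2_{L_2(\Om)}$; the logarithmic singularity of $X$ between $r\sim\e\eta^{1-\a}$ and $r\sim\e$ behaves the same way. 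So the singular regions contribute exactly $O(\mu)\|f\|^2_{L_2(\Om)}$ to $\|u^{(\mu)}\nabla W_{\mathrm{bl}}\|^2_{L_2(\Om)}$ --- not ``well below'' the target, but precisely the target; indeed (\ref{2.35}) is sharp, as one sees from the positive term $\int_{\g_\e}|u^{(\mu)}|^2\frac{\p W}{\p x_2}\di x_1\approx\pi\e\mu\sum_j|u^{(\mu)}(\e\pi j,0)|^2\approx\pi\mu\|u^{(\mu)}\|^2_{L_2(\G_-)}$ in the identity (\ref{2.39}). Once the summation is corrected your argument still closes, since $O(\mu)$ for the squared norm is exactly what (\ref{2.35}) requires, but as written the proposal rests on a false intermediate inequality and asserts a final order that cannot be right.

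For comparison, the paper never estimates $\nabla W$ pointwise. It integrates $\|\nabla(u^{(\mu)}W)\|^2_{L_2(\Om)}$ by parts twice, using (\ref{2.1}), (\ref{1.4}), (\ref{1.11}), to reach the identity (\ref{2.39}) in which $\nabla W$ has disappeared; the dominant term is the boundary integral over $\g_\e$, which is computed exactly via $\frac{\p W}{\p x_2}=\e\mu\,\frac{\p}{\p x_1}\arcsin\frac{x_1-\e\pi j}{\e\eta}$ on $\g_{\e,j}$ and one further integration by parts in $x_1$, yielding $C\mu\|f\|^2_{L_2(\Om)}$. That route reuses only the already-proved bounds (\ref{2.36a}), (\ref{2.37a}), (\ref{2.33}), (\ref{2.35a}) and makes transparent why the exponent $1/2$ in (\ref{2.35}) cannot be improved.
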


\begin{proof}
We integrate by parts employing (\ref{2.1}), (\ref{2.2}),
(\ref{1.4}), (\ref{1.11}),
\begin{align*}
\|\nabla (u^{(\mu)}W)\|_{L_2(\Om)}^2=&-\left( \frac{\p}{\p x_2}
u^{(\mu)} W, u^{(\mu)}W \right)_{L_2(\G_-)}- \left(\D (u^{(\mu)}
W),u^{(\mu)}W\right)_{L_2(\Om)}
\\
=&-\mu\|u^{(\mu)}W\|_{L_2(\G_-)}^2 +\int\limits_{\g_\e}
|u^{(\mu)}|^2\frac{\p W}{\p x_2}\di x_1+ \mu
(u^{(\mu)},u^{(\mu)}W)_{L_2(\G_\e)}
\\
&-(W\D u^{(\mu)},W u^{(\mu)})_{L_2(\Om)}-2\left(W\nabla
u^{(\mu)},u^{(\mu)}\nabla W\right)_{L_2(\Om)}
\\
&-\left(u^{(\mu)}\D W, u^{(\mu)}W\right)_{L_2(\Om)}.
\end{align*}
We take the real part of this identity,
\begin{equation}\label{3.36}
\begin{aligned}
\|\nabla (u^{(\mu)}W)\|_{L_2(\Om)}^2&=\mu
(u^{(\mu)},u^{(\mu)}W)_{L_2(\G_\e)}+\int\limits_{\g_\e}
|u^{(\mu)}|^2\frac{\p W}{\p x_2}\di x_1
\\
&-\mu\|u^{(\mu)}W\|_{L_2(\G_-)}^2 -\RE (W\D u^{(\mu)},W
u^{(\mu)})_{L_2(\Om)}
\\
&-2\RE \left(W\nabla u^{(\mu)},u^{(\mu)}\nabla W\right)_{L_2(\Om)}
-\left(u^{(\mu)}\D W, u^{(\mu)}W\right)_{L_2(\Om)}.
\end{aligned}
\end{equation}
Let us calculate the fifth term in the right hand side of the last
equation. We integrate by parts employing (\ref{1.4}),
\begin{align*}
2\RE \left(W\nabla u^{(\mu)}, u^{(\mu)}\nabla
W\right)_{L_2(\Om)}=&\frac{1}{2}\int\limits_{\Om} \nabla W^2\cdot
\nabla |u^{(\mu)}|^2\di x
\\
=&-\frac{1}{2}\int\limits_{\G_-} W^2\frac{\p}{\p x_2}
|u^{(\mu)}|^2\di x_1 - \frac{1}{2}\int\limits_{\Om} W^2\D
|u^{(\mu)}|^2\di x
\\
=&-\mu\|u^{(\mu)}W\|_{L_2(\G_-)}^2-\RE(W u^{(\mu)},W\D
u^{(\mu)})_{L_2(\Om)}
\\
&-\|W\nabla u^{(\mu)}\|_{L_2(\Om)}^2.
\end{align*}
We substitute the last identity into (\ref{3.36}),
\begin{equation}\label{2.39}
\begin{aligned}
\|\nabla (u^{(\mu)}W)\|_{L_2(\Om)}^2=&\mu
(u^{(\mu)},u^{(\mu)}W)_{L_2(\G_\e)}+ \int\limits_{\g_\e}
|u^{(\mu)}|^2\frac{\p W}{\p x_2}\di x_1
\\
&+ \|W\nabla u^{(\mu)}\|_{L_2(\Om)}^2- \left(u^{(\mu)}\D W,
u^{(\mu)}W\right)_{L_2(\Om)}.
\end{aligned}
\end{equation}
Taking $\d=\e\mu$ in (\ref{2.37a}), we get
\begin{equation}\label{2.40}
\|W\nabla u^{(\mu)}\|_{L_2(\Om)} \leqslant C\e \mu |\ln\e\mu|
\|f\|_{L_2(\Om)}.
\end{equation}
It follows from (\ref{2.36a}) with $\d=\e\mu$ and (\ref{2.33}) that
\begin{equation}\label{2.42}
\big|(u^{(\mu)}\D W,u^{(\mu)}W)_{L_2(\Om)}\big|\leqslant
C\eta^\a\e^{-1/2}\mu^{3/2}|\ln\e\mu| \|f\|_{L_2(\Om)}^2,\quad
\a\in(1/2,1).
\end{equation}
Employing (\ref{2.19}), (\ref{2.17}), by direct calculations we
check that
\begin{align*}
\int\limits_{\g_\e} |u^{(\mu)}|^2\frac{\p W}{\p x_2}\di x_1
=&\sum\limits_{j=-\infty}^{+\infty} \int\limits_{\g_{\e,j}}
|u^{(\mu)}|^2\frac{\p W}{\p x_2}\di x_1
\\
=&\e\mu\sum\limits_{j=-\infty}^{+\infty} \int\limits_{\g_{\e,j}}
|u^{(\mu)}|^2 \frac{\p}{\p x_1} \arcsin \frac{x_1-\e\pi
j}{\e\eta}\di x_1,
\end{align*}
and
\begin{align*}
& \int\limits_{\g_{\e,j}} |u^{(\mu)}|^2 \frac{\p}{\p x_1} \arcsin
\frac{x_1-\e\pi j}{\e\eta}\di x_1= \int\limits_{\e\pi
j-\e\eta}^{\e\pi j} |u^{(\mu)}|^2 \frac{\p}{\p x_1}\left( \arcsin
\frac{x_1-\e\pi j}{\e\eta}+\frac{\pi}{2}\right)\di x_1
\\
&\hphantom{\int\limits_{\g_\e^j} |u^{(\mu)}|^2 = } +
\int\limits_{\e\pi j}^{\e\pi j+\e\eta} |u^{(\mu)}|^2 \frac{\p}{\p
x_1}\left( \arcsin \frac{x_1-\e\pi
j}{\e\eta}-\frac{\pi}{2}\right)\di x_1,
\\
&\hphantom{\int\limits_{\g_\e^j} |u^{(\mu)}|^2 } =\pi
|u^{(\mu)}(\e\pi j,0)|^2+ \int\limits_{\g_{\e,j}} \left( \arcsin
\frac{x_1-\e\pi j}{\e\eta}-\frac{\pi}{2} \sgn(x_1-\e\pi j)\right)
\frac{\p}{\p x_1} |u^{(\mu)}|^2\di x_1,
\end{align*}
where
\begin{equation*}
\pi|u^{(\mu)}(\e\pi j,0)|^2=\frac{1}{\e}\int\limits_{\e\pi
(j-1)}^{\e\pi j} \frac{\p}{\p x_1} \big( (x_1-\e\pi
(j-1))|u^{(\mu)}|^2\big)\di x_1.
\end{equation*}
Thus, in view of the embedding of $\H^2(\Om)$ into $\H^1(\G_-)$ and
(\ref{2.30})
\begin{align*}
\left| \int\limits_{\g_\e} |u^{(\mu)}|^2\frac{\p W}{\p x_2}\di x_1
\right| \leqslant & \mu\sum\limits_{j=-\infty}^{+\infty}
\int\limits_{\e\pi(j-1)}^{\e\pi j} \left|\frac{\p}{\p x_1}
(x_1-\e\pi(j-1)) |u^{(\mu)}|^2\right|\di x_1
\\
&+\e\mu\pi\sum\limits_{j=-\infty}^{+\infty} \int\limits_{\g_\e^j}
\left|\frac{\p}{\p x_1}|u^{(\mu)}|^2\right|\di x_1\leqslant
C\mu\|f\|_{L_2(\Om)}^2.
\end{align*}
We substitute the obtained estimate, (\ref{2.35a}) with
$\widetilde{\d}=\e^2\mu^2$, (\ref{2.40}), (\ref{2.42}) into
(\ref{2.39}) and arrive at (\ref{2.35}).
\end{proof}

The proven lemma and (\ref{1.9}), (\ref{2.36a}) with $\d=\e\mu$
imply
\begin{equation}
\|(\mathcal{H}_\e-\iu)^{-1}-(\mathcal{H}^{(\mu)}-\iu)^{-1}
\|_{L_2(\Om)\to\H^1(\Om)}\leqslant C_1 \mu^{1/2}. \label{1.7}
\end{equation}
The resolvent $(\mathcal{H}^{(\mu)}-\iu)^{-1}$ is obviously analytic
in $\mu$ and thus
\begin{equation*}
\|(\mathcal{H}^{(\mu)}-\iu)^{-1}-(\mathcal{H}^{(0)}-\iu)^{-1}
\|_{L_2(\Om)\to\H^1(\Om)}\leqslant C\mu.
\end{equation*}
This inequality, (\ref{1.7}), and (\ref{1.10}) yield (\ref{1.8}),
(\ref{1.8a}).

\section{Uniform resolvent convergence for $\Hpe(\tau)$}

This section is devoted to the proof of
Theorems~\ref{th1.3},~\ref{th1.4}. The proof of the first theorem is
close in spirit to that of Theorem~2.3 in \cite{BC}. The difference
is that here we employ the corrector $W$ as we did in the previous
section. This is why an essential modification of the proof of
Theorem~2.3 in \cite{BC} is needed.

We begin with several auxiliary lemmas. The first one was proved in
\cite{BC}, see Lemma~4.2 in this paper.

\begin{lem}\label{lm3.1}
Let $|\tau|<1-\k$, where $0<\k<1$, and
\begin{equation*}
U_\e=\left(\Hpe(\tau)-\frac{\tau^2}{\e^2}\right)^{-1}f,\quad f\in
L_2(\Om_\e).
\end{equation*}
Then
\begin{align}
&\|U_\e\|_{L_2(\Om_\e)}\leqslant 4\|f\|_{L_2(\Om_\e)},\label{3.1}
\\
&\Big\|\frac{\p U_\e}{\p x_2}\Big\|_{L_2(\Om_\e)}\leqslant
2\|f\|_{L_2(\Om_\e)}, \nonumber
\\
&\Big\|\frac{\p U_\e}{\p x_1}\Big\|_{L_2(\Om_\e)}\leqslant
\frac{2}{\k^{1/2}}\|f\|_{L_2(\Om_\e)}. \nonumber
\end{align}
If, in addition, $f\in \mathfrak{L}_\e^\bot$, then
\begin{equation}\label{3.4}
\|U_\e\|_{L_2(\Om_\e)}\leqslant
\frac{\e}{\k^{1/2}}\|f\|_{L_2(\Om_\e)},\quad \|\nabla
U_\e\|_{L_2(\Om_\e)}\leqslant \frac{\e}{2\k} \|f\|_{L_2(\Om_\e)}.
\end{equation}
\end{lem}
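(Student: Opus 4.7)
The plan is to exploit the $\e\pi$-periodicity in $x_1$ built into $\Hoper^1(\Om_\e,\Gp_+\cup\gp_\e)$ by expanding $U_\e$ in the orthonormal Fourier basis $\phi_n(x_1):=(\e\pi)^{-1/2}\E^{2\iu n x_1/\e}$, $n\in\mathds{Z}$. Writing $U_\e(x)=\sum_n u_n(x_2)\phi_n(x_1)$ with $u_n\in \H^1(0,\pi)$, the Dirichlet condition on $\Gp_+$ forces $u_n(\pi)=0$ for every $n$, while $\phi_n$ diagonalizes $\iu\p_{x_1}-\tau/\e$ with eigenvalue $-(2n+\tau)/\e$. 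Testing the integral identity for $U_\e$ against $U_\e$ itself, Parseval converts
\begin{equation*}
\hpe(\tau)[U_\e,U_\e]-\frac{\tau^2}{\e^2}\|U_\e\|_{L_2(\Om_\e)}^2=\RE(f,U_\e)_{L_2(\Om_\e)}
\end{equation*}
into the diagonal identity
\begin{equation*}
\sum_{n\ne 0}\frac{4n(n+\tau)}{\e^2}\|u_n\|_{L_2(0,\pi)}^2+\sum_n\|u_n'\|_{L_2(0,\pi)}^2=\RE(f,U_\e).
\end{equation*}

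Everything else rests on two arithmetic inequalities valid for $|n|\geqslant 1$ and $|\tau|<1-\k$, namely $n(n+\tau)\geqslant\k$ (from $|n+\tau|\geqslant\k$) and $n(n+\tau)\geqslant\k n^2$ (since $1+\tau/n\geqslant 1-|\tau|/|n|\geqslant\k$), together with the Poincar\'e inequality $\|u_n\|_{L_2(0,\pi)}^2\leqslant 4\|u_n'\|_{L_2(0,\pi)}^2$ for any $u_n$ with $u_n(\pi)=0$, the constant coming from the first eigenvalue $1/4$ of $-d^2/dx_2^2$ on $(0,\pi)$ with Dirichlet at $\pi$ and a free endpoint at $0$. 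For the first three estimates, dropping the non-negative $n\ne 0$ sum and applying Poincar\'e term-by-term gives
\begin{equation*}
\|U_\e\|^2=\sum_n\|u_n\|^2\leqslant 4\sum_n\|u_n'\|^2\leqslant 4\|f\|\|U_\e\|,
\end{equation*}
hence $\|U_\e\|\leqslant 4\|f\|$; this feeds into $\|\p_{x_2}U_\e\|^2=\sum_n\|u_n'\|^2\leqslant 4\|f\|^2$ and $\|\p_{x_1}U_\e\|^2=\sum_{n\ne 0}(4n^2/\e^2)\|u_n\|^2\leqslant\k^{-1}\sum_{n\ne 0}(4n(n+\tau)/\e^2)\|u_n\|^2\leqslant 4\|f\|^2/\k$, the final step using the second arithmetic inequality.

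For the case $f\in\mathfrak{L}_\e^\bot$, the extra input is $(f,U_\e)=(f,U_\e^\bot)$, where $U_\e^\bot:=\sum_{n\ne 0}u_n\phi_n$, so the right-hand side of the Parseval identity is bounded by $\|f\|\|U_\e^\bot\|$; combining this with the first arithmetic inequality yields $\|U_\e^\bot\|\leqslant\e^2\|f\|/(4\k)$. The conceptual heart of the argument — and what initially looked like the main obstacle — is the zero mode $u_0$, which does not satisfy a clean boundary condition at $x_2=0$ because the mixed Dirichlet/Neumann data on $\gp_\e\cup\Gp_\e$ couple $u_0$ to all other modes. However, this obstacle evaporates once one notes that Poincar\'e applies uniformly to \emph{every} mode (including $u_0$, which vanishes at $\pi$), yielding directly
\begin{equation*}
\|U_\e\|^2\leqslant 4\sum_n\|u_n'\|^2\leqslant 4\|f\|\|U_\e^\bot\|\leqslant\e^2\|f\|^2/\k.
\end{equation*}
The gradient bound is obtained analogously: $\|\p_{x_2}U_\e\|^2=\sum_n\|u_n'\|^2\leqslant\|f\|\|U_\e^\bot\|$ and $\|\p_{x_1}U_\e\|^2\leqslant\k^{-1}\|f\|\|U_\e^\bot\|$, combined and simplified using $\k<1$ to collapse the lower-order term.
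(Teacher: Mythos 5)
Your argument is correct and is essentially the proof the paper delegates to \cite{BC}: the two inequalities (3.12) that the paper quotes from that proof are exactly your Fourier-side bound $\sum_{n\ne 0}4n(n+\tau)\e^{-2}\|u_n\|^2\geqslant \k\|\p_{x_1}U_\e\|^2$ (valid since $1+\tau/n\geqslant\k$ for $|n|\geqslant 1$) and your Poincar\'e inequality $\|u\|^2\leqslant 4\|u'\|^2$ for $u(\pi)=0$. One bookkeeping remark on the very last step: to land exactly on the constant $\e/(2\k)$ in (\ref{3.4}) you should extract $\k\|\nabla U_\e\|^2\leqslant \k\|\p_{x_1}U_\e\|^2+\|\p_{x_2}U_\e\|^2\leqslant \|f\|\,\|U_\e^\bot\|\leqslant \e^2\|f\|^2/(4\k)$ from the single energy identity, rather than adding the two separate bounds for $\|\p_{x_1}U_\e\|^2$ and $\|\p_{x_2}U_\e\|^2$, which would cost an extra factor $\sqrt{1+\k}$.
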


It was also shown in \cite{BC} in the proof of the last lemma that
for any $u\in\Hoper^1(\Om_\e,\Gp_+)$ and $|\tau|\leqslant 1-\k$
\begin{equation}\label{3.12}
\begin{aligned}
&\Big\| \left(\iu \frac{\p}{\p x_1}-
\frac{\tau}{\e}\right)u\Big\|_{L_2(\Om_\e)}^2-\frac{\tau^2}{\e^2}
\|u\|_{L_2(\Om_\e)}^2\geqslant \k \Big\|\frac{\p u}{\p
x_1}\Big\|_{L_2(\Om_\e)}^2,
\\
&\Big\|\frac{\p u}{\p x_2}\Big\|_{L_2(\Om_\e)}\geqslant
\frac{1}{2}\|u\|_{L_2(\Om)}.
\end{aligned}
\end{equation}

\begin{lem}\label{lm3.2}
Let $F\in L_2(0,\pi)$. Then
\begin{equation*}
|(\mathcal{Q}_\mu^{-1}F)(0)|\leqslant 5\|F\|_{L_2(0,\pi)}.
\end{equation*}
\end{lem}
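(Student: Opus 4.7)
The plan is to solve explicitly the boundary value problem defining $u:=\mathcal{Q}_\mu^{-1}F$, namely $-u''=F$ on $(0,\pi)$ with $u'(0)-\mu u(0)=0$ and $u(\pi)=0$, and then read off $u(0)$.

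First I would integrate the ODE twice, obtaining
\begin{equation*}
u'(t)=u'(0)-\int_0^t F(s)\di s,\qquad u(t)=u(0)+u'(0)\,t-\int_0^t(t-s)F(s)\di s.
\end{equation*}
Substituting the Robin condition $u'(0)=\mu u(0)$ and the Dirichlet condition $u(\pi)=0$ into the second formula gives
\begin{equation*}
(1+\mu\pi)\,u(0)=\int_0^\pi(\pi-s)F(s)\di s.
\end{equation*}

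Next I would apply the Cauchy--Schwartz inequality, using $\int_0^\pi(\pi-s)^2\di s=\pi^3/3$, to obtain
\begin{equation*}
|u(0)|\leqslant\frac{1}{1+\mu\pi}\cdot\frac{\pi^{3/2}}{\sqrt{3}}\|F\|_{L_2(0,\pi)}.
\end{equation*}
Since $\mu\geqslant 0$, the prefactor $1/(1+\mu\pi)\leqslant 1$, and a direct numerical estimate yields $\pi^{3/2}/\sqrt{3}<5$, which finishes the proof.

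The argument is entirely elementary; there is really no obstacle, since the operator $\mathcal{Q}_\mu$ is one-dimensional and its Green's function admits a closed form. The only care to take is that the constant $5$ in the statement is a convenient integer upper bound for $\pi^{3/2}/\sqrt{3}\approx 3.22$, so no sharper analysis is necessary.
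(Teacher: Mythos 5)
Your proof is correct and follows essentially the same route as the paper: an explicit formula for $(\mathcal{Q}_\mu^{-1}F)(0)$ followed by the Cauchy--Schwartz inequality. In fact your representation $(1+\mu\pi)\,u(0)=\int_0^\pi(\pi-s)F(s)\di s$ agrees with the true Green's function $G(0,t)=(\pi-t)/(1+\pi\mu)$, whereas the kernel displayed in the paper evaluates at $x_2=0$ to $(2\pi-t)/\bigl(2(1+\pi\mu)\bigr)$ and does not satisfy the Dirichlet condition at $x_2=\pi$; since either version gives a constant below $5$ ($\pi^{3/2}/\sqrt{3}\approx 3.22$ versus $\tfrac12\sqrt{7\pi^3/3}\approx 4.25$), the lemma is unaffected.
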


\begin{proof}
We can find $\mathcal{Q}_\mu^{-1}F$ explicitly
\begin{equation*}
(\mathcal{Q}_\mu^{-1}F)(x_2)=-\frac{1}{2}\int\limits_{0}^{\pi}\left(
|x_2-t|-\pi+ \frac{x_2-\pi}{1+\pi\mu}(1+\mu(t-\pi))\right)F(t)\di t.
\end{equation*}
Hence, by the Cauchy-Schwartz inequality
\begin{equation*}
|(\mathcal{Q}_\mu^{-1}F)(0)|\leqslant \frac{1}{2(1+\pi
\mu)}\int\limits_{0}^{\pi} (2\pi-t)|F(t)|\di t\leqslant
5\|F\|_{L_2(0,\pi)},
\end{equation*}
that completes the proof.
\end{proof}

\begin{proof}[Proof of Theorem~\ref{th1.3}]
Let $f\in L_2(\Om_\e)$, $f=F_\e+f_\e^\bot$, where $F_\e\in
\mathfrak{L}_\e$, $f_\e^\bot\in\mathfrak{L}_\e^\bot$,
\begin{gather}
F_\e(x_2)=\frac{1}{\e\pi}\int\limits_{-\frac{\e\pi}{2}}^{\frac{\e\pi}{2}}
f_\e(x)\di x_1,\nonumber
\\
\e\pi\|F_\e\|_{L_2(0,\pi)}^2+\|f_\e^\bot\|_{L_2(\Om_\e)}^2=
\|f\|_{L_2(\Om_\e)}^2. \label{3.5}
\end{gather}
Then
\begin{equation*}
\left(\Hpe(\tau)-\frac{\tau^2}{\e^2}\right)^{-1}f=
\left(\Hpe(\tau)-\frac{\tau^2}{\e^2}\right)^{-1}F_\e+
\left(\Hpe(\tau)-\frac{\tau^2}{\e^2}\right)^{-1}f_\e^\bot.
\end{equation*}
By (\ref{3.4}), (\ref{3.5}) we obtain
\begin{equation}\label{3.6}
\bigg\| \left(\Hpe(\tau)-\frac{\tau^2}{\e^2}\right)^{-1}f_\e^\bot
\bigg\|_{L_2(\Om_\e)}\leqslant
\frac{\e}{\k^{1/2}}\|f_\e^\bot\|_{L_2(\Om_\e)}\leqslant
\frac{\e}{\k^{1/2}}\|f\|_{L_2(\Om_\e)}.
\end{equation}

We denote
\begin{align*}
&U_\e:=\left(\Hpe(\tau)-\frac{\tau^2}{\e^2}\right)^{-1}F_\e,\quad
U^{(\mu)}_\e:=\mathcal{Q}_\mu^{-1}F_\e,
\\
&V_\e(x):=U_\e(x)-U^{(\mu)}_\e(x)-U^{(\mu)}_\e(0)W(x,\e,\mu)\chi_1(x_2),
\end{align*}
where, we remind, the function $\chi_1$ was introduced in the third
section. In view of (\ref{2.1}) and the definition of $U_\e$ the
function $V_\e$ belongs to $\Hoper^1(\Om_\e,\Gp_+\cup\gp_\e)$.

We write the integral identities for $U_\e$ and $U^{(\mu)}_\e$,
\begin{equation}
\hpe(\tau)[U_\e,\phi]-\frac{\tau^2}{\e^2}(U_\e,\phi)_{L_2(\Om_\e)}
=(F_\e,\phi)_{L_2(\Om_\e)}\label{3.7}
\end{equation}
for all $\phi\in\Hoper^1(\Om_\e,\Gp_+\cup\gp_\e)$, and
\begin{equation}\label{3.8}
\left(\frac{d U^{(\mu)}_\e}{dx_2},\frac{d\phi}{d
x_2}\right)_{L_2(0,\pi)}+\mu U^{(\mu)}_\e(0)\overline{\phi(0)}
=(F_\e,\phi)_{L_2(0,\pi)}
\end{equation}
for all $\phi\in\Ho^1((0,\pi),\{\pi\})$. Given any
$\phi\in\Hoper^1(\Om_\e,\Gp_+)$, for a.e. $x_1\in(-\e\pi/2,\e\pi/2)$
we have $\phi(x_1,\cdot)\in\Ho^1((0,\pi),\{\pi\})$. We take such
$\phi$ in (\ref{3.8}) and integrate it over
$x_1\in(-\e\pi/2,\e\pi/2)$,
\begin{equation*}
\left(\frac{d U^{(\mu)}_\e}{dx_2},\frac{\p\phi}{\p
x_2}\right)_{L_2(\Om_\e)}+\mu (U^{(\mu)}_\e, \phi)_{L_2(\Gp_-)}
=(F_\e,\phi)_{L_2(\Om_\e)}.
\end{equation*}
The function $U^{(\mu)}_\e$ is independent of $x_1$, and hence
\begin{align*}
\left( \left(\iu\frac{\p}{\p
x_1}-\frac{\tau}{\e}\right)U^{(\mu)}_\e, \left(\iu\frac{\p}{\p
x_1}-\frac{\tau}{\e}\right)\phi\right)_{L_2(\Om_\e)}=&
-\frac{\tau}{\e}\left(U^{(\mu)}_\e, \left(\iu\frac{\p}{\p
x_1}-\frac{\tau}{\e}\right)\phi\right)_{L_2(\Om_\e)}
\\
=&\frac{\tau^2}{\e^2} (U^{(\mu)}_\e,\phi)_{L_2(\Om_\e)}.
\end{align*}
The sum of two last equations is as follows,
\begin{equation}\label{3.9}
\begin{aligned}
&\left( \left(\iu\frac{\p}{\p
x_1}-\frac{\tau}{\e}\right)U^{(\mu)}_\e, \left(\iu\frac{\p}{\p
x_1}-\frac{\tau}{\e}\right)\phi\right)_{L_2(\Om_\e)}+ \left(\frac{\p
U^{(\mu)}_\e}{\p x_2},\frac{\p\phi}{\p x_2}\right)_{L_2(\Om_\e)}
\\
&\hphantom{\Bigg(\Bigg(}
-\frac{\tau^2}{\e^2}(U^{(\mu)}_\e,\phi)_{L_2(\Om_\e)} +\mu
(U^{(\mu)}_\e, \phi)_{L_2(\Gp_-)} =(F_\e,\phi)_{L_2(\Om_\e)}
\end{aligned}
\end{equation}
We let $\phi=V_\e$ in (\ref{3.7}), (\ref{3.9}) and take the
difference of these two equations,
\begin{align*}
&\left( \left(\iu\frac{\p}{\p
x_1}-\frac{\tau}{\e}\right)(U_\e-U^{(\mu)}_\e),
\left(\iu\frac{\p}{\p x_1}-\frac{\tau}{\e}\right) V_\e
\right)_{L_2(\Om_\e)}+ \left(\frac{\p}{\p
x_2}(U_\e-U^{(\mu)}_\e),\frac{\p V_\e}{\p x_2}\right)_{L_2(\Om_\e)}
\\
&\hphantom{\left(\iu\frac{\p}{\p
x_1}-\frac{\tau}{\e}\right)(U_\e-U^{(\mu)}_\e), \iu\frac{\p}{\p
x_1}- - - }
-\frac{\tau^2}{\e^2}(U_\e-U^{(\mu)}_\e,V_\e)_{L_2(\Om_\e)}=\mu
(U^{(\mu)}_\e,V_\e)_{L_2(\Gp_-)}.
\end{align*}
We represent $U_\e-U^{(\mu)}_\e$ as $V_\e+U^{(\mu)}_\e(0)W\chi_1$
and substitute it into the last equation,
\begin{equation}\label{3.10}
\begin{aligned}
&\left\|\left(\iu\frac{\p}{\p x_1}-\frac{\tau}{\e}\right)V_\e
\right\|_{L_2(\Om_\e)}^2+\left\|\frac{\p V_\e}{\p
x_2}\right\|_{L_2(\Om_\e)}^2-\frac{\tau^2}{\e^2}\|V_\e\|_{L_2(\Om_\e)}^2
\\
&=\mu(U^{(\mu)}_\e,V_\e)_{L_2(\Gp_\e)}-U^{(\mu)}_\e(0) \left(
\Big(\iu\frac{\p}{\p x_1}-\frac{\tau}{\e}\Big) W\chi_1,
\left(\iu\frac{\p}{\p x_1}-\frac{\tau}{\e}\right) V_\e
\right)_{L_2(\Om_\e)}
\\
&\hphantom{=}-U^{(\mu)}_\e(0) \left(\frac{\p W\chi_1}{\p
x_2},\frac{\p V_\e}{\p x_2}\right)_{L_2(\Om_\e)}
-\frac{\tau^2}{\e^2}U^{(\mu)}_\e(0)(W\chi_1,V_\e)_{L_2(\Om_\e)}
\\
&=U^{(\mu)}_\e(0)\left(\mu (W,V_\e)_{L_2(\Gp_\e)}-(\nabla
W\chi_1,\nabla V_\e)_{L_2(\Om_\e)} -\frac{2\iu\tau}{\e}
\left(\frac{\p W\chi_1}{\p x_1},V_\e\right)_{L_2(\Om_\e)}\right).
\end{aligned}
\end{equation}
We integrate by parts employing (\ref{2.1}),
\begin{equation*}
-\frac{2\iu\tau}{\e} \left(\frac{\p W\chi_1}{\p
x_1},V_\e\right)_{L_2(\Om_\e)}= \frac{2\iu\tau}{\e}
\left(W,\chi_1\frac{\p V_\e}{\p x_1}\right)_{L_2(\Om_\e)},
\end{equation*}
and
\begin{align*}
&\mu(W,V_\e)_{L_2(\Gp_\e)}-(\nabla (W\chi_1),\nabla
V_\e)_{L_2(\Om_\e)}
\\
&\hphantom{\mu(W,V_\e)_{L_2(\Gp_\e)}}=
\mu(W,V_\e)_{L_2(\Gp_\e)}+\left(\frac{\p W}{\p
x_2},V_\e\right)_{L_2(\Gp_\e)}+(\D (W\chi_1), V_\e)_{L_2(\Om_\e)}
\\
&\hphantom{\mu(W,V_\e)_{L_2(\Gp_\e)}}=(\D W\chi_1,
V_\e)_{L_2(\Om_\e)}.
\end{align*}
Together with (\ref{3.10}) it yields
\begin{equation}\label{3.11}
\begin{aligned}
&\left\|\left(\iu\frac{\p}{\p x_1}-\frac{\tau}{\e}\right)V_\e
\right\|_{L_2(\Om_\e)}^2+\left\|\frac{\p V_\e}{\p
x_2}\right\|_{L_2(\Om_\e)}^2-\frac{\tau^2}{\e^2}\|V_\e\|_{L_2(\Om_\e)}^2
\\
&\hphantom{\Bigg\|\Bigg\|}=U^{(\mu)}_\e(0)\left( \big(\D (W\chi_1),
V_\e\big)_{L_2(\Om_\e)}+ \frac{2\iu\tau}{\e} \left(W\chi_1,\frac{\p
V_\e}{\p x_1}\right)_{L_2(\Om_\e)} \right).
\end{aligned}
\end{equation}
It follows from Lemma~\ref{lm3.2} and (\ref{3.5}) that
\begin{equation*}
|U^{(\mu)}_\e(0)|\leqslant 5\pi\e^{-1/2} \|f\|_{L_2(\Om_\e)}.
\end{equation*}
Hence, we can estimate the right hand side of (\ref{3.11}) as
follows,
\begin{align*}
&\left| U^{(\mu)}_\e(0)\left(\big(\D (W\chi_1), V_\e
\big)_{L_2(\Om_\e)}+\frac{2\iu \tau}{\e} \left(W\chi_1,\frac{\p
V_\e}{\p x_1}\right)_{L_2(\Om_\e)}\right)\right|
\\
&\leqslant 5\pi\e^{-1/2}\|f\|_{L_2(\Om_\e)} \left( \|\D
(W\chi_1)\|_{L_2(\Om_\e)}\|V_\e\|_{L_2(\Om_\e)}+2\e^{-1}\|W\chi_1\|_{L_2(\Om_\e)}
\left\|\frac{\p V_\e}{\p x_1}\right\|_{L_2(\Om_\e)}\right)
\\
&\leqslant 50\pi^2\e^{-1}\|\D (W\chi_1)\|_{L_2(\Om)}^2
\|f\|_{L_2(\Om_\e)}^2+\frac{1}{8}\|V_\e\|_{L_2(\Om_\e)}^2
\\
&\hphantom{\leqslant}+ 25\pi^2
\k^{-1}\e^{-3}\|W\|_{L_2(\Om_\e)}^2\|f\|_{L_2(\Om_\e)}^2+\k\left\|
\frac{\p V_\e}{\p x_1}\right\|_{L_2(\Om_\e)}^2.
\end{align*}
We substitute this inequality and (\ref{3.12}) into (\ref{3.11}),
\begin{align*}
\k&\left\|\frac{\p V_\e}{\p
x_1}\right\|_{L_2(\Om_\e)}^2+\frac{1}{4}\|V_\e\|_{L_2(\Om_\e)}^2
\leqslant 50\pi^2\e^{-1}\|f\|_{L_2(\Om_\e)}^2\|\D
(W\chi_1)\|_{L_2(\Om_\e)}^2
\\
&+25\pi^2\k^{-1}\e^{-3}\|W\|_{L_2(\Om_\e)}^2\|f\|_{L_2(\Om_\e)}^2
+\frac{1}{8}\|V_\e\|_{L_2(\Om_\e)}^2 +\k \left\|\frac{\p V_\e}{\p
x_1}\right\|_{L_2(\Om_\e)}^2,
\\
&\|V_\e\|_{L_2(\Om_\e)}^2\leqslant
C\left(\e^{-1}\|f\|_{L_2(\Om_\e)}^2\|\D (W\chi_1)\|_{L_2(\Om_\e)}^2
+\k^{-1}\e^{-3}\|f\|_{L_2(\Om_\e)}^2\|W\|_{L_2(\Om_\e)}^2\right),
\\
&\|V_\e\|_{L_2(\Om_\e)}\leqslant C\left(\e^{-1/2}\|\D
(W\chi_1)\|_{L_2(\Om_\e)} + \k^{-1/2}\e^{-3/2}
\|W\|_{L_2(\Om_\e)}\right) \|f\|_{L_2(\Om_\e)},
\end{align*}
where the constants $C$ are independent of $\e$, $\mu$, $\k$, and
$f$. Combining the last inequality, (\ref{3.5}) and
Lemma~\ref{lm3.2}, we arrive at
\begin{equation}\label{3.13a}
\begin{aligned}
\|U_\e-U^{(\mu)}_\e&\|_{L_2(\Om_\e)}\leqslant \|V_\e\|_{L_2(\Om_\e)}
+|U^{(\mu)}(0)| \|W\|_{L_2(\Om_\e)}
\\
\leqslant & \|V_\e\|_{L_2(\Om_\e)}
+C\e^{-1/2}\|f\|_{L_2(\Om_\e)}\|W\|_{L_2(\Om_\e)}
\\
\leqslant &C\left(\e^{-1/2}\|\D W\chi_1\|_{L_2(\Om_\e)} +
\k^{-1/2}\e^{-3/2}\|W\|_{L_2(\Om_\e)}\right) \|f\|_{L_2(\Om_\e)},
\end{aligned}
\end{equation}
where the constants $C$ are independent of $\e$, $\mu$, $\k$, and
$f$.

Let us estimate $\|W\|_{L_2(\Om_\e)}$ and $\|\D
(W\chi_1)\|_{L_2(\Om_\e)}$. We have
\begin{equation*}
\|W\|_{L_2(\Om_\e)}^2=\|W\|_{L_2(\Om_\e\setminus\Om^\d)}^2+
\|W\|_{L_2(\Om_\e\cap\Om^\d)}^2.
\end{equation*}
We take $\d=\frac{3}{2}\eta^\a$ and in view of the definition
(\ref{2.19}) of $W$ we obtain
\begin{align*}
\|W\|_{L_2(\Om_\e\setminus\Om^\d)}^2=\e^2\mu^2
\|X\|_{L_2(\Om_\e\setminus\Om^\d)}^2\leqslant
\e^4\mu^2\int\limits_{|\xi_1|<\frac{\pi}{2},\,\xi_2>0}
|X(\xi)|^2\di\xi \leqslant C\e^4\mu^2,
\end{align*}
where the constant $C$ is independent of $\e$, $\mu$, $\k$, and $f$.
It follows from (\ref{2.28}) that
\begin{equation*}
\|W\|_{L_2(\Om_\e\cap\Om^{\frac{3}{2}\eta^\a})}^2\leqslant
C\e^2\eta^{2\a},\quad \a\in(0,1),
\end{equation*}
where the constant $C$ is independent of $\e$ and $\eta$. Hence,
\begin{equation}\label{3.15}
\|W\|_{L_2(\Om_\e)}\leqslant C\e^2\mu,
\end{equation}
where the constant $C$ is independent of $\e$ and $\mu$.

The definition (\ref{2.19}) of $W$, the equations in (\ref{2.11}),
(\ref{2.16}), the estimate (\ref{2.26}), and the exponential decay
of $X$,
\begin{equation*}
X(\xi)=\Odr(\E^{-2\xi_1}),\quad \xi_2\to+\infty
\end{equation*}
yield that
\begin{align*}
&\|\D (W\chi_1)\|_{L_2(\Om_\e)}^2\leqslant 2\|\D
W\|_{L_2(\Om_\e)}^2+ 2\left\| 2\frac{\p W}{\p
x_1}\chi_1'+W\chi_1''\right\|_{L_2(\Om_\e)}^2,
\\
&\|\D W\|_{L_2(\Om_\e)}^2\leqslant C\mu^2\eta^{2-2\a},\quad
\a\in(1/2,1),
\\
& \left\| 2\frac{\p W}{\p
x_1}\chi_1'+W\chi_1''\right\|_{L_2(\Om_\e)}^2 \leqslant
C\mu^2\E^{-2\e^{-1}},
\end{align*}
where $C$ are positive constants independent of $\e$, $\eta$, and
$\mu$. We substitute the last estimates and (\ref{3.15}) into
(\ref{3.13a}),
\begin{equation*}
\|U_\e-U^{(\mu)}_\e\|_{L_2(\Om_\e)}\leqslant C \k^{-1/2}\mu\e^{1/2}
\|f\|_{L_2(\Om_\e)},
\end{equation*}
where the constant $C$ is independent of $\e$, $\mu$, and $\k$.
Together with (\ref{3.6}) it completes the proof.
\end{proof}

\begin{proof}[Proof of Theorem~\ref{th1.4}]
First we obtain the upper bound for the eigenvalues $\l_n$. To do
this, we employ standard bracketing arguments (see, for instance,
\cite[Ch. X\!I\!I\!I, Sec. 15, Prop. 4]{RS4}), and estimate the
eigenvalues of $\Hpe(\tau)$ by those of the same operator but with
$\eta=\pi/2$, i.e., with Dirichlet boundary condition on $\Gp_-$.
The lowest eigenvalues of the latter operator are
\begin{equation*}
\frac{\tau^2}{\e^2}+n^2,\quad
\frac{(2+\tau)^2-\tau^2}{\e^2}+n^2,\quad
\frac{(2-\tau)^2-\tau^2}{\e^2}+n^2,\quad n=1,2,\ldots
\end{equation*}
Hence, for $n^2<4\k\e^{-2}$ the lowest eigenvalues among mentioned
are $\tau^2\e^{-2}+n^2$, and thus
\begin{equation}\label{3.16}
\frac{1}{4}\leqslant \l_n(\tau,\e)-\frac{\tau^2}{\e^2}\leqslant
n^2,\quad n< 2\k^{1/2}\e^{-1}.
\end{equation}
The lower estimate was obtained by replacing the boundary conditions
on $\Gp_-$ by the Neumann one. In the same way we can estimate the
eigenvalues of $Q_\mu$ replacing the boundary condition at $x_2=0$
by the Dirichlet and Neumann one,
\begin{equation}\label{3.17}
0\leqslant \L_n(\mu)\leqslant n^2
\end{equation}
uniformly in $\mu$ for all $n\in \mathds{Z}$.

By \cite[Ch. I\!I\!I, Sec. 1, Th. 1.4]{OIS}, Theorem~\ref{th1.3},
and (\ref{3.16}), (\ref{3.17}) we get
\begin{align*}
\left|\frac{1}{\l_n(\tau,\e)-\frac{\tau^2}{\e^2}}-\frac{1}{\L_n(\mu)}\right|
&\leqslant C\k^{-1/2}\e^{1/2}\mu,
\\
\left|\l_n(\tau,\e)-\frac{\tau^2}{\e^2}-\L_n(\mu)\right|&\leqslant
C\k^{-1/2}(\mu\e^{1/2}+\e)|\L_n(\mu)| \left|\l_n(\tau,\e)-\frac{\tau^2}{\e^2}\right|
\\
&\leqslant C n^4 \k^{-1/2}(\mu\e^{1/2}+\e),
\end{align*}
which proves (\ref{1.15}).

The eigenvalues $\L_n(\mu)$ are solutions to the equation
(\ref{1.17}), and the associated eigenfunctions are $\sin
\sqrt{\L_n}(x_2-\pi)$. Hence, these eigenvalues are holomorphic with
respect to $\mu$ by the inverse function theorem. The formula
(\ref{1.16}) can be checked by expanding the equation (\ref{4.69})
and $\L_n(\mu)$ w.r.t. $\mu$.
\end{proof}

\section{Bottom of the spectrum}

In this section we prove Theorem~\ref{th1.5}. The proof of
(\ref{1.18}) reproduces word by word the proof of similar equation
(2.5) in \cite{BC} with one minor change, namely, one should use
here identity
\begin{equation}\label{4.0}
\l_1(0,\e)=\frac{1}{4}+o(1),\quad \e\to+0,
\end{equation}
instead of similar identity in \cite{BC}. The identity (\ref{4.0})
follows from (\ref{1.15}), (\ref{1.16}).

In order to construct the asymptotic expansion for $\l_1(0,\e)$, we
employ the approach suggested in \cite{AsAn}, \cite{GD98},
\cite{Gzh01}, \cite{GDu99} for studying similar problems in bounded
domains.

The eigenvalue $\l_1(0,\e)$ and the associated eigenfunction
$\po(x,\e)$ of $\Hpe(0)$ satisfy the problem
\begin{equation}\label{4.4}
\begin{aligned}
-&\D\po(x,\e)=\l_1(0,\e)\po(x,\e)\quad\text{in}\quad \Om_\e,
\\
&\po(x,\e)=0\quad\text{on}\quad \Gp_+\cup\gp_\e,\qquad \frac{\p\po
}{\p x_2}(x,\e)=0\quad \text{on}\quad \Gp_\e.
\end{aligned}
\end{equation}
and periodic boundary conditions on the lateral boundaries of
$\Om_\e$. We construct the asymptotics for $\l_1(0,\e)$ as
\begin{equation*}
\l_1(0,\e)=\L(\e,\mu),
\end{equation*}
where $\L=\L(\e,\mu)$ is a function to be determined. It view of
(\ref{1.15}) with $\tau=0$ the function $\L$ should satisfy
(\ref{4.50}).

The asymptotics of the associated eigenfunction $\po_\e$ is
constructed as the sum of three expansion, namely, the external
expansion, the boundary layer, and the internal expansion. The
external expansion has a closed form,
\begin{equation}\label{4.2}
\psi_\e^{ex}(x,\L)=\sin\sqrt{\L}(x_2-\pi).
\end{equation}
It is clear that for any choice of $\L(\e,\mu)$ this function solves
the equation in (\ref{4.4}), and satisfies the periodic boundary
conditions on the lateral boundaries of $\Om_\e$.

The boundary layer is constructed in terms of the variables $\xi$,
i.e., $\psi_\e^{bl}=\psi_\e^{bl}(\xi,\mu)$. The main aim of
introducing the boundary layer is to satisfy the boundary condition
on $\Gp_\e$. We construct $\psi_\e^{bl}$ by the boundary layer
method. In accordance with this method, the series $\psi_\e^{bl}$
should satisfy the equation in (\ref{4.4}), the periodic boundary
condition on the lateral boundaries of $\Om_\e$, the boundary
condition
\begin{equation}\label{4.5}
\frac{\p\psi^{ex}_\e}{\p x_2}+\frac{\p \psi_\e^{bl}}{\p x_2}=0\quad
\text{on}\quad \Gp_\e,
\end{equation}
and it should decay exponentially as $\xi_2\to+\infty$.

It follows from (\ref{4.2}) and the definition of $\xi$ that
$\psi_\e^{bl}$ should satisfy the boundary condition
\begin{gather}
\frac{\p\psi_\e^{bl}}{\p\xi_2}=-\sqrt{\L}\cos\sqrt{\L}\pi
\quad\text{on}\quad \Gp^{0}, \label{4.51}
\\
\Gp^{0}:=\left\{\xi:
0<|\xi_1|<\frac{\pi}{2},\,\xi_2>0\right\}.\nonumber
\end{gather}
Here we passed to the limit $\eta\to+0$ in the definition of
$\Gp_\e$.

We substitute $\psi_\e^{bl}$ into the equation in (\ref{4.4}) and
rewrite it in the variables $\xi$,
\begin{equation}\label{4.52}
-\D_\xi \psi_\e^{bl}=\e^2\L\psi_\e^{bl},\quad \xi\in\Pi,\qquad
\Pi:=\left\{\xi: |\xi_1|<\frac{\pi}{2},\ \xi_2>0\right\}.
\end{equation}
To construct $\psi_\e^{bl}$, in \cite{AsAn}, \cite{GD98},
\cite{Gzh01}, \cite{GDu99} the authors used the standard way.
Namely, they sought $\psi_\e^{bl}$ and $\L(\e,\mu)$ as asymptotic
series power in $\e$. Then these series were substituted into
(\ref{4.51}), (\ref{4.52}), and equating the coefficients at like
powers of $\e$ implied the boundary value problems for the
coefficients of the mentioned series. In our case we do not employ
this way. Instead of this we study the existence of the required
solution to the problem (\ref{4.51}), (\ref{4.52}) and describe some
of its properties needed in what follows.

By $\mathfrak{V}$ we denote the space of $\pi$-periodic even in
$\xi_1$ functions belonging to
$C^\infty(\overline{\Pi}\setminus\{0\})$ and exponentially decaying
as $\xi_2\to+\infty$ together with all their derivatives uniformly
in $\xi_1$. We observe that $X\in \mathfrak{V}$.

\begin{lem}\label{lm4.3}
The function $X$ can be represented as the series
\begin{equation}\label{4.53}
X(\xi)=-\sum\limits_{n=1}^{+\infty}
\frac{1}{n}\E^{-2n\xi_2}\cos2n\xi_1,
\end{equation}
which converges in $L_2(\Pi)$ and in $C^k(\overline{\Pi}\cap\{\xi:
\xi\geqslant R\})$ for each $k\geqslant 0$, $R>0$.
\end{lem}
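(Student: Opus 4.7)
The plan is to reduce the identity to the classical Taylor expansion of $\ln(1-w)$ via a complex-logarithmic simplification of $X$. Setting $z=\xi_1+\iu\xi_2$ with $\xi_2>0$ and writing
\begin{equation*}
\sin z=-\frac{\E^{-\iu z}}{2\iu}\bigl(1-\E^{2\iu z}\bigr),
\end{equation*}
I would take the modulus to get $|\sin z|=\tfrac{1}{2}\E^{\xi_2}\bigl|1-\E^{2\iu z}\bigr|$. Substituting into the definition of $X$ from the paragraph following (\ref{2.11}), the terms $\ln 2$ and $-\xi_2$ cancel and one arrives at the compact identity $X(\xi)=\RE\,\ln\bigl(1-\E^{2\iu z}\bigr)$. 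Since $\bigl|\E^{2\iu z}\bigr|=\E^{-2\xi_2}<1$ for $\xi_2>0$, the principal branch admits the absolutely convergent expansion $\ln(1-\E^{2\iu z})=-\sum_{n=1}^{\infty}\tfrac{1}{n}\E^{2\iu n z}$. Taking real parts and using $\E^{2\iu n z}=\E^{-2n\xi_2}(\cos 2n\xi_1+\iu\sin 2n\xi_1)$ yields (\ref{4.53}) pointwise for every $\xi_2>0$.

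For convergence in $C^k\bigl(\overline{\Pi}\cap\{\xi_2\geqslant R\}\bigr)$ I would differentiate term by term. Each partial derivative of $\frac{1}{n}\E^{-2n\xi_2}\cos 2n\xi_1$ of total order at most $k$ is bounded by $C n^{k-1}\E^{-2n\xi_2}\leqslant C n^{k-1}\E^{-2nR}$. The geometric factor $\E^{-2nR}$ dominates any polynomial in $n$, so the series of derivatives converges uniformly on $\{\xi_2\geqslant R\}$ by the Weierstrass $M$-test; this gives $C^k$ convergence for every $k$ and every $R>0$.

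For convergence in $L_2(\Pi)$, the orthogonality of $\{\cos 2n\xi_1\}_{n\geqslant 1}$ on $(-\pi/2,\pi/2)$ decouples the summands, and a direct calculation gives
\begin{equation*}
\Bigl\|\tfrac{1}{n}\E^{-2n\xi_2}\cos 2n\xi_1\Bigr\|_{L_2(\Pi)}^2=\frac{1}{n^2}\cdot\frac{\pi}{2}\cdot\int_0^{\infty}\E^{-4n\xi_2}\di\xi_2=\frac{\pi}{8n^3}.
\end{equation*}
Summability of $\sum n^{-3}$ shows the partial sums are Cauchy in $L_2(\Pi)$. To identify the $L_2$-limit with $X$, I would first verify that $X\in L_2(\Pi)$: the only potential obstruction is the singularity at the origin, but by (\ref{2.12}) one has $X(\xi)=\ln|\xi|+\Odr(1)$ near $0$, and $\ln|\xi|$ is locally square-integrable in $\mathds{R}^2$; at infinity the representation $X=\RE\ln(1-\E^{2\iu z})$ gives exponential decay. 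The pointwise convergence established in the first paragraph then identifies the a.e.-limit of the $L_2$-Cauchy partial sums with $X$, finishing the proof.

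There is no serious obstacle here: once the simplification $X=\RE\ln(1-\E^{2\iu z})$ is observed, everything reduces to the standard geometric/Taylor series and to orthogonality estimates. The one point deserving care is the identification of the $L_2$-limit, which is why verifying $X\in L_2(\Pi)$ via the local $\ln$-singularity and the exponential tail is included as an explicit step.
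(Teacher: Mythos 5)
Your proof is correct, and it takes a genuinely different route from the paper's. You exploit the closed form of $X$ directly: the factorization $\sin z=-\tfrac{1}{2\iu}\E^{-\iu z}\bigl(1-\E^{2\iu z}\bigr)$ yields the exact identity $X(\xi)=\RE\ln\bigl(1-\E^{2\iu z}\bigr)$, after which (\ref{4.53}) is just the Mercator expansion of $\ln(1-w)$ with $|w|=\E^{-2\xi_2}<1$; the two convergence claims then follow from the Weierstrass $M$-test on $\{\xi_2\geqslant R\}$ and from orthogonality of the cosines together with the standard a.e.\ identification of the $L_2$-limit (your explicit check that $X\in L_2(\Pi)$, via the local $\ln|\xi|$ singularity from (\ref{2.12}) and the exponential tail, closes that step properly). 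The paper instead argues without using the explicit formula beyond its qualitative properties: it expands $X$ in a Fourier cosine series in $\xi_1$, uses harmonicity and exponential decay to get $X_n(\xi_2)=k_n\E^{-2n\xi_2}$, and then determines $k_n=-1/n$ by the integration by parts (\ref{4.55a}) over $\Pi_\d$, which combines the Neumann data $\p X/\p\xi_2=-1$ from (\ref{2.11}) with the logarithmic behavior at the origin. Your argument is shorter and avoids the delicate limit $\d\to+0$ near the singularity; the paper's scheme buys reusability when no closed form is at hand, and indeed the same separation-of-variables machinery is what produces the series (\ref{4.59}) for $Z$ in Lemma~\ref{lm4.4}. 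Both approaches establish the lemma.
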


\begin{proof}
Since $X\in\mathfrak{V}$, for each $\xi_2>0$ and each $k\geqslant 0$
we can expand it in $C^k[-\pi/2,\pi/2]$,
\begin{align}
&X(\xi)=\sum\limits_{n=1}^{+\infty} X_n(\xi_2)\cos 2n\xi_1,\quad
\|X(\cdot,\xi_2)\|_{L_2\left(-\frac{\pi}{2},\frac{\pi}{2}\right)}^2=
\frac{\pi}{2}\sum\limits_{n=1}^{+\infty} X_n^2(\xi_2), \label{4.54}
\\
&X_n(\xi_2)=\frac{2}{\pi}\int\limits_{-\frac{\pi}{2}}^{\frac{\pi}{2}}
X(\xi)\cos 2n\xi_1\di\xi_1.\nonumber
\end{align}
Integrating the second equation in (\ref{4.54}) w.r.t. $\xi_2$, we
obtain the Parseval identity
\begin{equation*}
\|X\|_{L_2(\Pi)}^2= \frac{\pi}{2}\sum\limits_{n=1}^{+\infty}
\|X_n\|_{L_2(0,+\infty)}^2.
\end{equation*}
It yields that the first series in (\ref{4.54}) converges also in
$L_2(\Pi)$, since
\begin{equation*}
\Big\|X-\sum\limits_{n=1}^{N} X_n\cos 2n\xi_1\Big\|_{L_2(\Pi)}^2=
\|X\|_{L_2(\Pi)}^2-\frac{\pi}{2}
\sum\limits_{n=1}^{N}\|X_n\|_{L_2(0,+\infty)}^2.
\end{equation*}
The harmonicity of $X$ and the exponential decay as
$\xi_2\to+\infty$ yield
\begin{align*}
&X_n''(\xi_2)=-\int\limits_{-\frac{\pi}{2}}^{\frac{\pi}{2}}\frac{\p^2
X}{\p\xi_1^2}\cos 2n\xi_1\di \xi_1=-n^2 X_n(\xi_2),
\\
&X_n(\xi_2)=k_n\E^{-2n\xi_2},\quad
k_n=\frac{2}{\pi}\int\limits_{\Gp^0} X_n\cos 2n\xi_1\di\xi_1.
\end{align*}

Denote $\Pi_\d:=\Pi\setminus\{\xi: |\xi|<\d\}$. Employing
(\ref{2.11}) and the harmonicity of $X$, we integrate by parts,
\begin{equation}\label{4.55a}
\begin{aligned}
0=&-\lim\limits_{\d\to+0}\int\limits_{\Pi} \E^{-2n\xi_2}\cos 2n\xi_1
\D_\xi X\di\xi
\\
&=\int\limits_{\Gp^0} \left(\cos 2n\xi_1\frac{\p X}{\p\xi_2}+2n
X\cos 2n\xi_1\right) \di\xi_1
\\
&+\lim\limits_{\d\to+0} \int\limits_{|\xi|<\d,\,\xi_2>0}\left(
\E^{-2n\xi_2} \cos2n\xi_1\frac{\p
X}{\p|\xi|}-X\frac{\p}{\p|\xi|}\E^{-2n\xi_2}\cos 2n\xi_1\right)\di
s
\\
=&-\int\limits_{\Gp^0} \cos 2n\xi_1\di \xi_1+\pi n k_n+\pi.
\end{aligned}
\end{equation}
Thus, $k_n=-1/n$, which implies (\ref{4.53}). The convergence of
this series in $C^k(\overline{\Pi}\cap\{\xi: \xi_2\geqslant R\})$
follows from the exponential decay of its terms in (\ref{4.52}) as
$n\to+\infty$.
\end{proof}

\begin{lem}\label{lm4.4}
For small real $\b$ the problem
\begin{equation}\label{4.56}
-\D_\xi Z-\b^2 Z=\b^2 X,\quad \xi\in\Pi,\qquad \frac{\p
Z}{\p\xi_2}=0,\quad \xi\in\Gp^0,
\end{equation}
has a solution in $\H^2(\Pi)\cap \mathfrak{V}$. This solution and
all its derivatives w.r.t. $\xi$ decay exponentially as
$\xi_2\to+\infty$ uniformly in $\xi_1$ and $\b$. The differentiable
asymptotics
\begin{equation}\label{4.57}
Z(\xi,\b)=Z(0,\b)+\Odr(|\xi|^2\ln|\xi|),\quad\xi\to0,
\end{equation}
holds true uniformly in $\b$. The function $(X+Z)$ is bounded in
$L_2(\Pi)$ uniformly in $\b$. The identity
\begin{equation}\label{4.58}
Z(0,\b)=\b^2\tht(\b^2)
\end{equation}
is valid, where the function $\tht
$ is defined in (\ref{1.22}).
The function $\tht
$ is holomorphic 
 and its Taylor series
is (\ref{4.58a}).
\end{lem}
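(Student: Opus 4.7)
The plan is to solve (\ref{4.56}) by separation of variables in the Fourier basis $\{\cos 2n\xi_1\}$ already used for $X$ in Lemma~\ref{lm4.3}. Writing $Z(\xi,\b)=\sum_{n=1}^{\infty} Z_n(\xi_2,\b)\cos 2n\xi_1$ and substituting the expansion (\ref{4.53}) into (\ref{4.56}) reduces the problem to the ODE $-Z_n''+(4n^2-\b^2)Z_n = -\b^2 n^{-1}e^{-2n\xi_2}$ on $(0,\infty)$ with $Z_n'(0)=0$ and decay at $+\infty$. Taking $n^{-1}e^{-2n\xi_2}$ as a particular solution and adjusting by the decaying homogeneous mode $e^{-\sqrt{4n^2-\b^2}\,\xi_2}$ to meet the Neumann condition gives
\begin{equation*}
Z_n(\xi_2,\b)=\frac{1}{n}e^{-2n\xi_2}-\frac{2}{\sqrt{4n^2-\b^2}}\,e^{-\sqrt{4n^2-\b^2}\,\xi_2},
\end{equation*}
and note $Z_n\equiv 0$ when $\b=0$, as expected since $Z$ is the $\Odr(\b^2)$ correction driven by the right-hand side.

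The crucial quantitative bound, obtained by algebraic manipulation of this explicit formula (rationalising $\frac{1}{n}-\frac{2}{\sqrt{4n^2-\b^2}} = \frac{-\b^2}{n\sqrt{4n^2-\b^2}(2n+\sqrt{4n^2-\b^2})}$ and comparing the two exponentials by the mean value theorem), is
\begin{equation*}
|Z_n(\xi_2,\b)|\leqslant C\,\b^2\, n^{-3}\, e^{-cn\xi_2},\qquad \xi_2\geqslant 0,\ n\geqslant 1,
\end{equation*}
with constants uniform in small real $\b$. This immediately yields $Z\in\H^2(\Pi)\cap\mathfrak{V}$ and uniform exponential decay of $Z$ and all its derivatives, since differentiation multiplies Fourier coefficients by at most $n$, well within the $n^{-3}$ margin. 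For $X+Z$ the Fourier coefficients collapse to $-\tfrac{2}{\sqrt{4n^2-\b^2}}e^{-\sqrt{4n^2-\b^2}\xi_2}$ (the $n^{-1}e^{-2n\xi_2}$ terms cancel against (\ref{4.53})), so $\|X+Z\|_{L_2(\Pi)}^2=\pi\sum_n (4n^2-\b^2)^{-3/2}$, uniformly bounded for $|\b|<1$. Formula (\ref{4.58}) follows by setting $\xi_2=0$ in the rationalised expression and comparing with (\ref{1.22}).

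For the asymptotics (\ref{4.57}) I decompose
\begin{equation*}
Z(\xi,\b)-Z(0,\b)= \sum_n Z_n(\xi_2,\b)(\cos 2n\xi_1-1) + \sum_n\bigl(Z_n(\xi_2,\b)-Z_n(0,\b)\bigr).
\end{equation*}
In the first series I combine $|Z_n|\leqslant C\b^2 n^{-3}$ with $|\cos 2n\xi_1-1|\leqslant \min(2,2n^2\xi_1^2)$ and split the summation at $n\sim|\xi_1|^{-1}$, producing $\Odr(\b^2\xi_1^2|\ln|\xi_1||)$. For the second, the key observation is $\partial_{\xi_2}Z_n(0,\b)=0$ (the Neumann condition), which together with a bound $|\partial_{\xi_2}^2 Z_n|\leqslant C\b^2 n^{-1}e^{-cn\xi_2}$ (obtained by writing $4n-2\sqrt{4n^2-\b^2}=\Odr(\b^2/n)$ and comparing the two exponentials) yields $|Z_n(\xi_2,\b)-Z_n(0,\b)|\leqslant C\b^2\min(\xi_2^2/n,n^{-3})$; a splitting at $n\sim\xi_2^{-1}$ gives $\Odr(\b^2\xi_2^2|\ln\xi_2|)$. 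The differentiable version uses the same splitting on the termwise differentiated series, the loss of a factor $n$ per derivative being absorbed at the cost of one power of $|\xi|$ each time.

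Holomorphy of $\tht$ and the Taylor series (\ref{4.58a}) follow by expanding $(1-\b^2/(4n^2))^{-1/2}$ by the binomial formula, which after simplification yields
\begin{equation*}
\frac{1}{n\sqrt{4n^2-\b^2}(2n+\sqrt{4n^2-\b^2})}=\sum_{j\geqslant 1}\frac{(2j-1)!!}{8^j\,j!}\,\frac{\b^{2(j-1)}}{n^{2j+1}}.
\end{equation*}
The resulting double series in $(n,j)$ is absolutely convergent for $|\b|<2$, so Fubini permits interchange of summation, and $\sum_n n^{-(2j+1)}=\z(2j+1)$ yields (\ref{4.58a}) as a convergent power series in the argument of $\tht$; the radius of convergence is $4$, matching the first pole at $\b^2=4$ coming from $n=1$. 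The main technical obstacle is keeping the rate $|\xi|^2|\ln|\xi||$ in (\ref{4.57}) uniform in $\b$, which requires carefully balancing the horizontal smallness of $\cos 2n\xi_1-1$ and the vertical smallness of $Z_n(\xi_2)-Z_n(0)$ against the narrow $n^{-3}$ summability margin; the differentiable version is the most delicate point, since no factor in the coefficient estimate can be wasted.
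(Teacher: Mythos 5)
Your proposal is correct, and its backbone --- the explicit separation-of-variables series for $Z$ with coefficients $Z_n(\xi_2,\b)=\frac1n \E^{-2n\xi_2}-\frac{2}{\sqrt{4n^2-\b^2}}\E^{-\sqrt{4n^2-\b^2}\,\xi_2}$ --- is exactly the representation (\ref{4.59}) used in the paper, as are the cancellation of the $\frac1n\E^{-2n\xi_2}$ terms in $X+Z$ and the binomial expansion leading to (\ref{4.58a}). You diverge from the paper in three places. (i) Existence: the paper first produces $Z=\b^2(\mathcal{B}-\b^2)^{-1}X$ abstractly, where $\mathcal{B}$ is $-\D_\xi$ on the subspace of $\H^2(\Pi)$ orthogonal to all $\xi_1$-independent functions, using $\mathcal{B}\geqslant 4$ to get an inverse bounded uniformly in $\b$, and only afterwards identifies the solution with the series; you build the solution directly from the series, which is fine provided you note that $C^\infty$ smoothness up to $\Gp^0\setminus\{0\}$ still requires elliptic regularity, since your termwise bounds $\Odr(\b^2 n^{k-3})$ on $k$-th derivatives are not summable on $\xi_2=0$ for $k\geqslant 3$. (ii) The value $Z(0,\b)$: the paper computes it by a Green's identity against the auxiliary function $\widetilde{Z}=X+Z+\b^{-1}\sin\b\xi_2$, extracting $-\pi Z(0,\b)$ from the logarithmic singularity at the origin; you simply evaluate the series at $\xi=0$, which is legitimate thanks to your uniform bound $|Z_n|\leqslant C\b^2 n^{-3}$ (so the series converges uniformly on $\overline{\Pi}$ and its sum is continuous at $0$), and is genuinely shorter. (iii) The asymptotics (\ref{4.57}): the paper outsources this step to Lemma~3.2 of \cite{G92}, whereas your dyadic splitting at $n\sim|\xi|^{-1}$, combined with $Z_n'(0)=0$ and $|Z_n''|\leqslant C\b^2 n^{-1}\E^{-cn\xi_2}$ in the vertical direction, is a self-contained substitute that correctly yields the $|\xi|^2\ln|\xi|$ rate uniformly in $\b$ (in fact with an extra factor $\b^2$). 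The two routes are equivalent in strength; yours trades the paper's soft arguments for sharper coefficient estimates, which you need anyway for the uniform $L_2$-bound on $X+Z$.
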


\begin{proof}
Let $\mathfrak{W}$ be the subspace of $\H^2(\Pi)$ consisting of the
functions satisfying periodic boundary conditions on the lateral
boundaries of $\Pi$, the Neumann boundary condition on $\Gp^0$, and
being orthogonal in $L_2(\Pi)$ to all functions $\phi=\phi(\xi_2)$
belonging to $L_2(\Pi)$. The space $\mathfrak{W}$ is the Hilbert
one.

By $\mathcal{B}$ we denote the operator in $L_2(\Pi)$ acting as
$-\D_\xi$ on $\mathfrak{W}$. This operator is symmetric and closed.
It follows from the definition of $\mathfrak{W}$ that each
$v\in\mathfrak{W}$ satisfies the equation
\begin{equation*}
\int\limits_{-\frac{\pi}{2}}^{\frac{\pi}{2}}
v(\xi)\di\xi_1=0\quad\text{for a.e.}\ \xi_2\in(0,+\infty).
\end{equation*}
Using this fact, one can check easily that $\mathcal{B}\geqslant 4$,
and therefore the bounded inverse operator exists, and
$\|\mathcal{B}^{-1}\|\leqslant 1/4$. Hence,
\begin{equation*}
(\mathcal{B}-\b^2)^{-1}=\mathcal{B}^{-1}(\I-\b^2
\mathcal{B}^{-1})^{-1},
\end{equation*}
i.e., the inverse operator $(\mathcal{B}-\b^2)^{-1}$ exists and is
bounded uniformly in $\b$.

We let $Z:=\b^2(\mathcal{B}-\b^2)^{-1} X$. It is clear that the
function $Z\in\H^2(\Pi)$ solves (\ref{4.56}) and satisfies the
periodic boundary conditions on the lateral boundaries of $\Pi$. By
the standard smoothness improving theorems and the smoothness of $X$
we conclude that $Z\in C^\infty(\overline{\Pi}\setminus\{0\})$.

Using Lemma~\ref{lm4.3}, for $\xi_2>0$ we can also construct $Z$ by
the separation of variables,
\begin{equation}\label{4.59}
Z(\xi,\b)=\sum\limits_{n=1}^{+\infty} \frac{1}{n}\left(\E^{-2n\xi_2}
- \frac{2n}{\sqrt{4n^2-\b^2}}\E^{-\sqrt{4n^2-\b^2}\xi_2}\right) \cos
2n\xi_1.
\end{equation}
In the same way as in the proof of Lemma~\ref{lm4.3} one can check
that this series converges in $L_2(\Pi)$ and
$C^k(\overline{\Pi}\cap\{\xi: \xi_2\geqslant R\})$ for each
$k\geqslant 0$, $R>0$. Thus, this function and all its derivatives
w.r.t. $\xi$ decay exponentially as $\xi_2\to+\infty$ uniformly in
$\xi_1$ and $\b$, and $Z\in \mathfrak{V}$.

By (\ref{4.53}), (\ref{4.59}) we have
\begin{align*}
&X+Z=-\sum\limits_{n=1}^{+\infty}\frac{2}{\sqrt{4n^2-\b^2}}
\E^{-\sqrt{4n^2-\b^2}\xi_2}\cos 2n\xi_1,
\\
&\|X+Z\|_{L_2(\Pi)}^2=\sum\limits_{n=1}^{+\infty}
\frac{\pi}{4n^2-\b^2}\int\limits_{n=1}^{+\infty}
\E^{-2\sqrt{4n^2-\b^2}\xi_2}\di\xi_2= \sum\limits_{n=1}^{+\infty}
\frac{\pi}{2(4n^2-\b^2)^{3/2}}.
\end{align*}
Hence, the function $(X+Z)$ is bounded in $L_2(\Pi)$ uniformly in
$\b$.

Reproducing the proof of Lemma~3.2 in \cite{G92}, one can show
easily that the function $Z$ satisfies differentiable asymptotics
(\ref{4.57}) uniformly in $\b$. Let us calculate $Z(0,\b)$. The
function
\begin{equation}\label{4.60}
\widetilde{Z}(\xi,\b):=X(\xi)+Z(\xi,\b)+\b^{-1}\sin\b\xi_2
\end{equation}
solves the boundary value problem
\begin{equation*}
(\D_\xi+\b^2)\widetilde{Z}=0,\quad\xi\in\Pi,\qquad \frac{\p
\widetilde{Z}}{\p\xi_2}=0,\quad \xi\in\Gp^{0},
\end{equation*}
is bounded, satisfies periodic boundary condition on the lateral
boundaries of $\Pi$, and has the asymptotics
\begin{equation*}
\widetilde{Z}(\xi,\b)=\ln|\xi|+\Odr(1),\quad \xi\to0.
\end{equation*}
Using these properties and (\ref{4.56}), we integrate by parts in
the same way as in (\ref{4.55a}),
\begin{align*}
\b^2\int\limits_{\Pi} X \widetilde{Z}\di\xi=&-\lim\limits_{\d\to+0}
\int\limits_{\Pi_\d} \widetilde{Z} (\D_\xi+\b^2)Z\di \xi
\\
=& \lim\limits_{\d\to+0}\int\limits_{|\xi|=\d,\,\xi_2>0} \left(
\widetilde{Z}\frac{\p Z}{\p|\xi|}-Z\frac{\p \widetilde{Z}}{\p|\xi|}
\right)\di s=-\pi Z(0,\b),
\end{align*}
and hence
\begin{equation*}
Z(0,\b)=-\frac{\b^2}{\pi}\int\limits_{\Pi} X\widetilde{Z}\di\xi.
\end{equation*}
We substitute (\ref{4.53}), (\ref{4.59}), (\ref{4.60}) into the last
identity,
\begin{align*}
Z(0,\b)=&-\b^2\sum\limits_{n=1}^{+\infty}\frac{1}{n\sqrt{4n^2-\b^2}}
\int\limits_0^{+\infty} \E^{-(2n+\sqrt{4n^2-\b^2})\xi_2} \di \xi_2
\\
=&-\b^2\sum\limits_{n=1}^{+\infty}
\frac{1}{n\sqrt{4n^2-\b^2}(2n+\sqrt{4n^2-\b^2})}
\end{align*}
that proves (\ref{4.58}).

The series in the definition of $\tht$ converges uniformly in $\b$,
and by the first Weierstrass theorem this function is holomorphic in
small $\b$. It is easy to see that
\begin{align*}
&\frac{1}{n\sqrt{4n^2-\b}(2n+\sqrt{4n^2-\b})}
=\frac{2n-\sqrt{4n^2-\b}}{\b n\sqrt{4n^2-\b}}
\\
&=\frac{1}{\b}\left(\frac{2}{\sqrt{4n^2-\b}} -\frac{1}{n}\right)
=\frac{1}{\b}\left(
\frac{1}{n\sqrt{1-\frac{\b}{4n^2}}}-\frac{1}{n}\right)=
\sum\limits_{j=1}^{+\infty} \frac{(2j-1)!!\b^{j-1}}{8^j n^{2j+1}
j!}.
\end{align*}
We substitute this identity into the definition of $\tht(\b)$,
\begin{equation*}
\tht(\b)=-\sum\limits_{n=1}^{+\infty} \sum\limits_{j=1}^{+\infty}
\frac{(2j-1)!!\b^{j-1}}{8^j n^{2j+1} j!}=
-\sum\limits_{j=1}^{+\infty}
\frac{(2j-1)!!\z(2j+1)\b^{j-1}}{8^j j!},
\end{equation*}
which yields (\ref{4.58a}). The proof is complete.
\end{proof}

We choose the boundary layer as
\begin{equation}\label{4.61}
\psi_\e^{bl}(\xi,\L)=\e\sqrt{\L}\cos\sqrt{\L}\pi
\big(X(\xi)+Z(\xi,\e\sqrt{\L})\big).
\end{equation}
It is clear that this function satisfies all the aforementioned
requirements for the boundary layer.

In accordance with Lemma~\ref{lm4.4}, the boundary layer has a
logarithmic singularity at $\xi=0$, and the sum of the external
expansion and the boundary layer does not satisfy the boundary
condition on $\gp_\e$ in (\ref{4.4}). This is the reason of
introducing the internal expansion. We construct it as depending on
$\vs:=\vs^{(1)}$ and employ the method of matching of the asymptotic
expansions. It follows from (\ref{4.2}), (\ref{1.17}) that
\begin{align}
&\psi_\e^{ex}(x,\mu)=\psi_\e^{ex}(0,\mu)+\frac{\p\psi_\e^{ex}}{\p
x_2}(0,\mu)x_2+\Odr(|x|^2),\quad x\to0, \label{4.25a}
\\
&\psi_\e^{ex}(0,\mu)=-\sin\sqrt{\L(\e,\mu)}\pi,\label{4.62}
\end{align}
where the asymptotics is uniform in $\L(\e,\mu)$. Using the
definition of $\vs=\xi\eta^{-1}$ and (\ref{1.6}), by (\ref{4.61}),
(\ref{4.57}), (\ref{2.12}) we obtain
\begin{align*}
\psi_\e^{bl}(\xi,\L)=&\sqrt{\L}\cos\sqrt{\L}\pi
\left(-\frac{1}{\mu}+\e(\ln|\vs|+\ln 2)-x_2\right)
\\
&+\e^3 \L^{3/2}\tht(\e^2\L)\cos\sqrt{\L}\pi
+\Odr(\e|\xi|^2\ln|\xi|),\quad\xi\to0,
\end{align*}
uniformly in $\e$ and $\L$. 
In view of (\ref{4.51}),
(\ref{4.25a}), (\ref{4.62}) we have
\begin{align*}
\psi_\e^{ex}(x,\L)+\psi_\e^{bl}(\xi,\L)=&
-\frac{\sqrt{\L}}{\mu}\cos\sqrt{\L}\pi- \sin\sqrt{\L}\pi + \e^3
\L^{3/2}\tht(\e^2\L)\cos\sqrt{\L}\pi
\\
&+\e\sqrt{\L}\cos\sqrt{\L}\pi (\ln|\z|+\ln 2)
 +\Odr\big(\e\eta^2|\z|^2(|\ln|\z||+|\ln\eta|)\big),
\end{align*}
as $x\to0$. Hence, in accordance with the method of matching of
asymptotic expansions we conclude that the internal expansion should
be as follows,
\begin{equation}\label{4.64}
\psi_\e^{in}(\vs,\L)=\psi_0^{in}(\z,\L,\e)+ \e
\psi_1^{in}(\z,\L,\e),
\end{equation}
where the coefficients should satisfy the asymptotics
\begin{align}
&
\begin{aligned}
\psi_0^{in}(\vs,\L,\e)= &-\frac{\sqrt{\L}}{\mu}\cos\sqrt{\L}\pi-
\sin\sqrt{\L}\pi
\\
&+ \e^3 \L_1^{3/2} \tht(\e^2\L)\cos\sqrt{\L}\pi+o(1),\quad
\vs\to\infty,
\end{aligned} \label{4.65}
\\
& \psi_1^{in}(\vs,\L)=\e\sqrt{\L}\cos\sqrt{\L}\pi (\ln|\z|+\ln
2)+o(1),\quad \vs\to\infty.\nonumber
\end{align}
We substitute (\ref{4.64}) into (\ref{4.4}) and pass to the
variables $\vs$. It yields the boundary value problems for
$\psi_i^{in}$,
\begin{equation}
\D_\vs \psi_i^{in}=0,\quad \vs_2>0, \qquad \psi_i^{in}=0,\quad
\vs\in\gp^{1},\qquad \frac{\p \psi_i^{in}}{\p\vs_2}=0,\quad
\vs\in\Gp^{1}. \label{4.67}
\end{equation}
For $i=0$ this problem has the only bounded solution which is
trivial,
\begin{equation}\label{4.68}
\psi_0^{in}=0.
\end{equation}
Thus, by (\ref{4.65}) we obtain the equation (\ref{4.69}) for
$\L(\e,\mu)$.

In view of the properties of the function $Y$ described in the third
section the function $\psi_1^{in}$ should be chosen as
\begin{equation}\label{4.70}
\psi_1^{in}(\z,\L,\e)=\e\sqrt{\L}\cos\sqrt{\L}\pi Y(\z).
\end{equation}
The formal constructing of $\l_1(0,\e)$ and $\po_\e$ is complete.

We proceed to the studying of the equation (\ref{4.69}). Since the
function $\tht$ is holomorphic by Lemma~\ref{lm4.4}, the function
\begin{equation*}
T(\e,\mu,\L):=\sqrt{\L}\cos\sqrt{\L}\pi+\mu\sin\sqrt{\L\pi}
-\e^3\mu\L^{3/2}\tht(\e^2 \L)\cos\sqrt{\L}\pi
\end{equation*}
is jointly holomorphic w.r.t. small $\e$, $\mu$, and $\L$ close to
$1/4$. Employing the formula (\ref{4.58a}), we continue $T$
analytically to complex values of $\e$, $\mu$, and $\L$.

As $\e=\mu=0$, the equation (\ref{4.69}) becomes
\begin{equation*}
\sqrt{\L}\cos\sqrt{\L}\pi=0,
\end{equation*}
and it has the root $\L=1/4$. It is clear that
\begin{equation*}
\frac{\p T}{\p\L}\left(0,0,\frac{1}{4}\right)\not=0.
\end{equation*}
Hence, by the inverse function theorem there exists the unique root
of the equation (\ref{4.69}). This root is jointly holomorphic in
$\e$ and $\mu$ and satisfies (\ref{4.50}). We represent this root as
\begin{equation}\label{4.71}
\L(\e,\mu)=\L_0(\mu)+\sum\limits_{j=1}^{+\infty}\e^j
\widetilde{K}_j(\mu),
\end{equation}
where $\widetilde{K}_j(\mu)$ are holomorphic in $\mu$ functions. We
choose the leading term in this series as $\L_1(\mu)$, since as
$\e=0$ the equation (\ref{4.69}) coincides with (\ref{1.17}).

We substitute (\ref{4.71}) and (\ref{4.58a}) into (\ref{4.69}) and
equate the coefficients at $\e^i$, $i=1,\ldots,8$. It implies the
equations for $\widetilde{K}_i$, $i=1,\ldots,8$. Solving these
equations, we obtain $\widetilde{K}_1=\widetilde{K}_2=0$ and
(\ref{1.20}).

Let us prove that $\widetilde{K}_{2j+1}(\mu)=\mu^2 K_{2j+1}(\mu)$,
$\widetilde{K}_{2j}(\mu)=\mu^3 K_{2j}(\mu)$, where $K_j(\mu)$ are
holomorphic in $\mu$ functions. It is sufficient to prove that
\begin{equation*}
\widetilde{K}_j(0)=\widetilde{K}'_j(0)=0,\quad
\widetilde{K}''_{2j}(0)=0.
\end{equation*}
We take $\mu=0$ in (\ref{4.69}) and (\ref{4.71}),
\begin{gather}
\sqrt{\L(0,\e)}\cos\sqrt{\L(0,\e)}\pi=0,
\\
\L(0,\e)=\frac{1}{4}.\label{4.72a}
\end{gather}
By (\ref{1.16}), (\ref{4.71}) it implies $\widetilde{K}_j(0)=0$. We
differentiate the equation (\ref{4.69}) w.r.t. $\mu$ and then we let
$\mu=0$. It implies the equation
\begin{align*}
&- \frac {1}{2} \frac{\pi\sqrt{\L(\e,0)}\sin \sqrt{\L(\e,0)}\pi-
\cos\sqrt{\L(\e,0)}\pi}{\sqrt{\L(\e,0)}}\frac{\p\L}{\p\mu}(\e,0)
\\
&- \e^3\L^{3/2}(\e,0)\tht(\e^2\L(\e,0))\cos\sqrt{\L(\e,0)}\pi+
\sin\sqrt{\L(\e,0)} \pi=0.
\end{align*}
We substitute here the identity (\ref{4.72a}) and arrive at the
equation
\begin{equation*}
-\frac{\pi}{2}\frac{\p\L}{\p\mu}(\e,0)+1=0,
\end{equation*}
which by (\ref{1.16}) implies
\begin{equation}\label{4.72b}
\frac{\p\L}{\p\mu}(\e,0)=\frac{2}{\pi}=\frac{\p
\L_1}{\p\mu}(0).
\end{equation}
These identities and (\ref{4.71}) yield $\widetilde{K}_j'(0)=0$.

We differentiate the equation (\ref{4.69}) twice w.r.t. $\mu$ and
then we let $\mu=0$ taking into account the identities
(\ref{4.72a}), (\ref{4.72b}), and (\ref{4.58a}),
\begin{gather*}
-\frac{4}{\pi}+\frac{\e^3}{2}\tht\Big(\frac{\e^2}{4}\Big)
-\frac{\pi}{2}\frac{\p^2\L}{\p\mu^2}(\e,0)=0,
\\
\frac{\p^2\L}{\p\mu^2}(\e,0)=\frac{1}{\pi^2}
\left(-8+\e^3\pi\tht\Big(\frac{\e^2}{4}\Big)\right)=-\frac{1}{\pi^2}
\left(8+\frac{\pi}{8}\sum\limits_{j=1}^{+\infty}
\frac{(2j-1)!!\z(2j+1)}{ 32^{j-1}\,j!} \e^{2j+1} \right).
\end{gather*}
Hence, $\widetilde{K}''_{2j}(0)=0$, $j\geqslant 1$.

To calculate all other coefficients of (\ref{1.23}) we substitute
this series and (\ref{4.58a}) into the equation (\ref{4.69}) and
then equate the coefficients of like powers of $\e$. It implies certain 
equations, which can be solved w.r.t. $K_i$. Since all the
coefficients in the expansion in $\e$ of $\tht$ and other terms in
the equation (\ref{4.69}) are real, the functions $K_i$ are real,
too. Hence, by (\ref{1.23}) the function $\L$ is real-valued for
real $\e$ and $\mu$.

We proceed to the justification of the asymptotics. Denote
\begin{equation}
\begin{aligned}
\Po_\e(x):=&\big(\psi_\e^{ex}(x,\L(\e,\mu))+\chi_1(x_2)
\psi_\e^{bl}(\xi,\L(\e,\mu)) \big)
\big(1-\chi_1(|\vs|\eta^{1/2})\big)
\\
&+\chi_1\big(|\vs|\eta^{1/2}\big)\psi_\e^{in}(\vs,\L(\e,\mu)).
\end{aligned}\label{5.30a}
\end{equation}
where, we remind, $\chi_1$ is the cut-off function introduced in the
third section.

\begin{lem}\label{lm4.5}
The function $\Po_\e\in C^\infty(\overline{\Om}_\e\setminus\{x:
x_1=\pm\e\eta,\, x_2=0\})$ belongs to the domain of $\Hpe(0)$,
satisfies the convergence
\begin{equation}\label{5.31a}
\left\|\Po_\e-\sin\frac{x_2-\pi}{2}\right\|_{L_2(\Pi)}=\Odr(\e^{1/2}\mu),
\quad \e\to+0,
\end{equation}
and solves the equation
\begin{equation}\label{4.75}
\big(\Hpe(0)-\L(\e,\mu)\big)\Po_\e=h_\e,
\end{equation}
where for the function $h_\e\in L_2(\Om_\e)$ an uniform in $\e$,
$\mu$, and $\eta$ estimate
\begin{equation}\label{4.76}
\|h_\e\|_{L_2(\Om_\e)}\leqslant C(\mu\E^{-2\e^{-1}}+\e\eta^{1/2})
\end{equation}
holds true.
\end{lem}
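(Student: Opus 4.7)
The plan is to verify the four assertions of the lemma---smoothness, domain membership, the $L_2$-convergence (\ref{5.31a}), and the equation (\ref{4.75}) with estimate (\ref{4.76})---by direct calculation starting from the decomposition (\ref{5.30a}). Write $\Po_\e=A(1-\chi_*)+B\chi_*$ with $A:=\psi_\e^{ex}+\chi_1(x_2)\psi_\e^{bl}$, $B:=\psi_\e^{in}$ and $\chi_*(x):=\chi_1(|\vs|\eta^{1/2})$. Smoothness off the two corners $(\pm\e\eta,0)$ follows from the smoothness of the three ingredients away from the singular points of $X$, $Z$, $Y$ together with that of both cutoffs; in a neighborhood of the corners $\chi_*=1$, so $\Po_\e=\psi_\e^{in}$, whose only non-smooth behaviour consists of the square-root type singularities of $Y$ at $\vs=(\pm1,0)$. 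For the boundary conditions: $\sin\sqrt{\L}(x_2-\pi)|_{x_2=\pi}=0$ and $\chi_1(\pi)=0$ give Dirichlet on $\Gp_+$; near $\gp_\e$ one has $\chi_*=1$ and $\psi_\e^{in}=0$ on $\gp^1$ by (\ref{4.67}), giving Dirichlet on $\gp_\e$; (\ref{4.51}) combined with $\p_{x_2}\psi_\e^{ex}|_{x_2=0}=\sqrt{\L}\cos\sqrt{\L}\pi$ yields $\p_{x_2}(\psi_\e^{ex}+\psi_\e^{bl})=0$ on $\Gp^0$, which together with (\ref{4.67}) for $\psi_\e^{in}$ and the observation that $\chi_*$ depends only on $|x|$ (so $\p_{x_2}\chi_*$ vanishes on $x_2=0$) produces the Neumann condition on $\Gp_\e$.

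The convergence (\ref{5.31a}) is obtained by splitting
\[
\Po_\e-\sin\tfrac{x_2-\pi}{2}=\bigl(\psi_\e^{ex}-\sin\tfrac{x_2-\pi}{2}\bigr)+\chi_1(x_2)(1-\chi_*)\psi_\e^{bl}+\chi_*\bigl(\psi_\e^{in}-\psi_\e^{ex}-\chi_1(x_2)\psi_\e^{bl}\bigr).
\]
Using $\sqrt{\L(\e,\mu)}=1/2+O(\mu)$ from (\ref{4.50}) and (\ref{1.16}), the first summand is $O(\mu)$ pointwise on $\Om_\e$, which is $O(\e^{1/2}\mu)$ in $L_2(\Om_\e)$. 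Since $\cos\tfrac{\pi}{2}=0$, both $\psi_\e^{bl}$ and $\psi_\e^{in}$ carry the overall factor $\cos\sqrt{\L}\pi=O(\mu)$, and their $L_2(\Om_\e)$-contributions, computed via the changes of variables $x=\e\xi$ and $x=\e\eta\vs$, are strictly smaller than $\e^{1/2}\mu$ under (\ref{1.5})--(\ref{1.6}).

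The core computation expands
\[
h_\e=(1-\chi_*)(-\D-\L)A+\chi_*(-\D-\L)B-2\nabla\chi_*\cdot\nabla(B-A)-(B-A)\D\chi_*.
\]
Both $\psi_\e^{ex}$ and (via (\ref{4.52}), reverted to $x$-coordinates) $\psi_\e^{bl}$ satisfy $-\D v=\L v$, so the first term reduces to the commutators of $-\D$ with $\chi_1(x_2)$ applied to $\psi_\e^{bl}$; these are supported where $\chi_1'(x_2)\neq0$, i.e.\ where $x_2$ is bounded away from zero, and by Lemmas~\ref{lm4.3}--\ref{lm4.4} the quantities $\psi_\e^{bl}$ and $\p_{x_2}\psi_\e^{bl}$ decay there like $\mu\E^{-2x_2/\e}$, producing the $\mu\E^{-2\e^{-1}}$ term of (\ref{4.76}). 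The second term uses that $\psi_\e^{in}$ is harmonic (by (\ref{4.67})), so $(-\D-\L)B=-\L B$; since $|B|\le C\e\mu|\ln\eta|=O(1)$ on the support $|x|\le\tfrac{3}{2}\e\eta^{1/2}$ (area $O(\e^2\eta)$) by (\ref{1.6}), its $L_2$-norm is $O(\e\eta^{1/2})$.

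The two commutator terms live on the matching annulus $\e\eta^{1/2}\le|x|\le\tfrac{3}{2}\e\eta^{1/2}$. Combining the expansions (\ref{2.12}) of $X$, (\ref{4.57}) of $Z$ and (\ref{2.18}) of $Y$ at infinity with the Taylor expansion of $\psi_\e^{ex}$ at $x_2=0$ and the identity $\e\ln\eta=-1/\mu$ from (\ref{1.6}), a direct check shows that the constant terms of $A$ cancel identically by virtue of the defining equation (\ref{4.69}) for $\L$, and that the logarithmic piece $\e\sqrt{\L}\cos\sqrt{\L}\pi(\ln|\vs|+\ln2)$ of $A$ matches the leading asymptotic of $B=\e\sqrt{\L}\cos\sqrt{\L}\pi Y(\vs)$. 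Hence $B-A$ and $\nabla(B-A)$ on the annulus are controlled by the remainders $O(|\xi|^2\ln|\xi|)$, $O(|\vs|^{-2})$ and the Taylor remainder $O(x_2^2)$. The hardest part of the proof is this residual estimate: the cutoff derivatives $|\nabla\chi_*|=O((\e\eta^{1/2})^{-1})$ and $|\D\chi_*|=O((\e^2\eta)^{-1})$ are strongly singular on the annulus, so one must exploit both the constant cancellation forced by (\ref{4.69}) and the exact matching of the radial singular pieces $\e\sqrt{\L}\cos\sqrt{\L}\pi\,x/|x|^2$ in $\nabla A$ and $\nabla B$ (coming respectively from $\xi/|\xi|^2$ in $\nabla_\xi X$ and $\vs/|\vs|^2$ in $\nabla_\vs Y$). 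Integrating the controlled residuals over the annulus of area $O(\e^2\eta)$ then yields the $\e\eta^{1/2}$ term of (\ref{4.76}), completing the estimate.
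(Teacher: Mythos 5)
Your proposal is correct and follows essentially the same route as the paper: the same three-way splitting of $h_\e$ into the outer-cutoff commutator with $\psi_\e^{bl}$ (exponentially small by the decay of $X$ and $Z$), the $-\L\psi_\e^{in}$ term on the support of $\chi_*$, and the matching-annulus commutators, with the cancellation of constants enforced by (\ref{4.69}) and the matching of the logarithmic/radial singular pieces controlling $B-A$ and $\nabla(B-A)$ there. The only cosmetic discrepancies are a stray factor of $\e$ in your pointwise bound for $\psi_\e^{in}$ (harmless, since your $L_2$ conclusion $\Odr(\e\eta^{1/2})$ still holds) and the implicit absorption of the resulting $\mu\eta^{1/2}$ contribution into $\mu\E^{-2\e^{-1}}$ via $\eta=\E^{-1/(\e\mu)}$, which the paper also leaves tacit.
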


\begin{proof}
It follows from the definition of $\Po_\e$ that
\begin{equation}\label{4.76a}
\Po_\e\in C^\infty(\overline{\Om}_\e\setminus\{x: x_1=\pm\e\eta,\,
x_2=0\})\cap\Hoper^1(\Om_\e,\Gp_+).
\end{equation}
The boundary condition (\ref{4.5}), (\ref{4.62}), and (\ref{2.16})
for $Y$ yield those for $\Po_\e$,
\begin{equation}\label{4.77}
\Po_\e=0\quad \text{on}\quad \Gp_+\cup\gp_\e,\qquad
\frac{\p\Po_\e}{\p x_2}=0\quad\text{on}\quad \Gp_\e.
\end{equation}

Let us show that
\begin{equation}\label{4.77a}
-(\D_\xi+\L(\e,\mu))\Po_\e=h_\e,\quad x\in\Om_\e,
\end{equation}
where $h_\e\in L_2(\Om_\e)$ satisfies (\ref{4.76}). Employing the
equations (\ref{4.52}), (\ref{4.67}), we obtain
\begin{align}
&-(\D_\xi+\L)\Po_\e=h_\e,\quad
h_\e=-(h_\e^{(1)}+h_\e^{(2)}+h_\e^{(3)}), \label{4.78}
\\
&h_\e^{(1)}(x)=2\chi_1'(x_2)\frac{\p}{\p
x_2}\psi_\e^{bl}(\xi,\L(\e,\mu))+\chi_1''(x_2)
\psi_\e^{bl}(\xi,\L(\e,\mu)),\nonumber
\\
&h_\e^{(2)}(x)=\L(\e,\mu)\chi_1(|\vs|\eta^{1/2} )
\psi_\e^{in}(\vs,\L(\e,\mu)),\nonumber
\\
&h_\e^{(3)}(x)=2\nabla_x\chi_1(|\vs|\eta^{1/2})\cdot\nabla_x
\Po_\e^{mat}(x)+\Po_\e^{(mat)}(x)\D_x\chi_1(|\vs|\eta^{1/2}),\nonumber
\\
&\Po_\e^{(mat)}(x):=\psi_\e^{in}(\vs,\L(\e,\mu))-
\psi_\e^{ex}(x,\L(\e,\mu))-\psi_\e^{bl}(\xi,\L(\e,\mu)).
\end{align}
It is clear that $h_\e^{(i)}\in L_2(\Om_\e)$ that implies the same
for $h_\e$.

Due to (\ref{4.69}) the function $\psi_\e^{bl}$ can be rewritten as
follows,
\begin{align*}
\psi_\e^{bl}(\xi,\L(\e,\mu))=&\mu\big(\e^3 \L^{3/2}(\e,\mu)
\tht(\e^2\L(\e,\mu))\cos\sqrt{\L(\e,\mu)}\pi
\\
&-\sin\sqrt{\L(\e,\mu)}\pi\big)\big(X(\xi)
+Z(\xi,\e\sqrt{\L(\e,\mu)})\big).
\end{align*}
Thus,
\begin{align*}
h_\e^{(1)}(x)=&\mu\big(\e^3 \L^{3/2}(\e,\mu)
\tht(\e^2\L(\e,\mu))\cos\sqrt{\L(\e,\mu)}\pi
 -\sin\sqrt{\L(\e,\mu)}\pi\big)
\\
&\Big(2\chi_1'(x_2)\frac{\p}{\p x_2}+\chi_1''(x_2)\Big)
\big(X(\xi)+Z(\xi,\e\sqrt{\L(\e,\mu)})\big).
\end{align*}
The functions $\chi_1'(x_2)$, $\chi_1''(x_2)$ are non-zero only for
$1<x_2<\frac{3}{2}$ that corresponds to
$\e^{-1}<\xi_2<\frac{3}{2}\e^{-1}$. For such values of $\xi$ we can
use the series (\ref{4.53}), (\ref{4.59}) for $X$ and $Z$ which
converge in $C^k\big(\left\{\xi: \e^{-1}\leqslant \xi_2\leqslant
\frac{3}{2} \e^{-1},\,|\xi_1|\leqslant\frac{\pi}{2}\right\}\big)$.
It yields the exponential estimate for $h_\e^{(1)}$,
\begin{equation}\label{4.79}
\|h_\e^{(1)}\|_{L_2(\Om_\e)}\leqslant C\mu\E^{-2\e^{-1}},
\end{equation}
where the constant $C$ is independent of $\e$ and $\mu$.

Taking into account (\ref{4.68}), and replacing in (\ref{4.70}) the
factor $\sqrt{\L}\cos\sqrt{\L}\pi$ by $\mu\big(\e^3 \L^{3/2}(\e,\mu)
\tht(\e^2\L(\e,\mu))\cos\sqrt{\L(\e,\mu)}\pi
 -\sin\sqrt{\L(\e,\mu)}\pi\big)$ as we did it in (\ref{4.78}), we
estimate $h_\e^{(2)}$,
\begin{equation}\label{4.79a}
\begin{aligned}
\|h_\e^{(2)}\|_{L_2(\Om_\e)}^2\leqslant &
C\e^4\mu^2\eta^2\int\limits_{|\vs|<\eta^{-1/2}, \,\vs_2>0}
|Y(\vs)|^2\di\vs
\\
\leqslant & C\e^4\mu^2\eta|\ln^2\eta|\leqslant C\e^2\eta,
\end{aligned}
\end{equation}
where the constants $C$ are independent of $\e$, $\mu$, and $\eta$.

The asymptotics (\ref{2.12}), (\ref{4.57}), (\ref{2.18}), the
equation (\ref{4.69}), and the identities (\ref{4.2}), (\ref{4.61}),
(\ref{4.64}), (\ref{4.68}), (\ref{4.70}) imply the differentiable
asymptotics for $\Po_\e^{mat}$,
\begin{align*}
\Po_\e^{mat}(x)=&\e\sqrt{\L}\cos\sqrt{\L}\pi \big(\ln|\vs|+\ln
2+\Odr(|\vs|^{-2})\big)-\sin\sqrt{\L}(x_2-\pi)
\\
&-\e \sqrt{\L}\cos\sqrt{\L}\pi \big( \ln|\xi|+\ln
2+\e^2\L\tht(\e^2\L)-\xi_2+\Odr(|\xi|^2)\big)
\\
=&-\sin\sqrt{\L}(x_2-\pi) -\sin\sqrt{\L}\pi
+\sqrt{\L}x_2\cos\sqrt{\L}\pi
+\Odr\big(\e\mu(|\xi|^2+|\vs|^{-2})\big)
\\
=&\Odr\big(|x|^2+\e\mu(|\xi|^2+|\vs|^{-2})\big)
\end{align*}
uniformly in $\e$, $\mu$, and $\eta$ as
\begin{equation}\label{4.80}
 \e\eta^{1/2}<|x|<\frac{3}{2}\e\eta^{1/2},\quad x\in\Om_\e.
\end{equation}
Thus, for such $x$
\begin{align*}
&|\Po_\e^{mat}(x)|\leqslant C(\e(\e+\mu)\eta),
\\
&|\nabla_x \Po_\e^{mat}(x)|\leqslant C((\e+\mu)\eta^{1/2}),
\end{align*}
where the constants $C$ are independent of $x$, $\e$, $\mu$, and
$\eta$. Since the functions $\nabla_x\chi_1(|\vs|\eta^{1/2})$,
$\D_x\chi_1(|\vs|\eta^{1/2})$ are non-zero only for $x$ satisfying
(\ref{4.80}), the last inequalities for $\Po_\e^{mat}$ and $\nabla_x
\Po_\e^{mat}$ enable us to estimate $h_\e^{(3)}$,
\begin{equation*}
\|h_\e^{(3)}\|_{L_2(\Om_\e)}\leqslant C((\e+\mu)\eta^{1/2}),
\end{equation*}
where the constant $C$ is independent of $\e$, $\mu$, and $\eta$. We
sum the last estimate and (\ref{4.79}), (\ref{4.79a}),
\begin{equation*}
\|h_\e\|_{L_2(\Om_\e)}\leqslant C(\mu\E^{-2\e^{-1}}+\e\eta^{1/2}),
\end{equation*}
where the constant $C$ is independent of $\e$, $\mu$, and $\eta$.
This estimate imply (\ref{4.76}).

Due to the smoothness (\ref{4.76a}) of $\Po_\e$, the boundary value
conditions (\ref{4.77}), and the equation (\ref{4.77a}), the
function $\Po_\e$ is a generalized solution to the boundary value
problem (\ref{4.77a}), (\ref{4.77}). Hence, $\Po_\e$ belongs to the
domain of $\Hpe(0)$.

Let us prove the estimate (\ref{5.31a}). Completely as in the
estimating $h_\e$, we check that
\begin{equation*}
\|\chi_1(x_2) \psi_\e^{bl} \big(1-\chi_1(|\vs|\eta^{1/2})\big)
+\chi_1\big(|\vs|\eta^{1/2}\big)\psi_\e^{in} -\psi_\e^{ex}
\chi_1(|\vs|\eta^{1/2})\|_{L_2(\Om_\e)}=\Odr(\e^2\mu).
\end{equation*}
In view of (\ref{1.16}) and the definition (\ref{4.2}) of
$\psi_\e^{ex}$ the estimate
\begin{equation*}
\left\|\psi_\e^{ex}-\sin\frac{x_2-\pi}{2}\right\|_{L_2(\Pi)}=
\Odr(\e^{1/2}\mu)
\end{equation*}
 holds true. Two last estimates and the definition (\ref{5.30a}) of
$\Po_\e$ imply (\ref{5.31a}).
\end{proof}

We proceed to the estimating of the error terms. The core of these
estimates are Lemmas~12,~13 in \cite{VL}. We employ these results in
the form they were formulated in \cite[Ch. I\!I\!I, Sec. 1.1, Lm.
1.1]{OIS}. For the reader's convenience we provide this lemma below.
\begin{lem}\label{lm4.6}
Let $\mathcal{A}: H\to H$ be a continuous linear compact
self-adjoint operator in a Hilbert space $H$. Suppose that there
exist a real $M>0$ and a vector $u\in H$, such that $\|u\|_{H}=1$
and
\begin{equation*}
\|\mathcal{A}u-M u\|_{H}\leqslant \k,\quad \a=const>0.
\end{equation*}
Then there exists an eigenvalue $M_i$ of operator $\mathcal{A}$ such
that
\begin{equation*}
|M_i-\mu|\leqslant \k.
\end{equation*}
Moreover, for any $d>\k$ there exists a vector $\overline{u}$ such
that
\begin{equation*}
\|u-\overline{u}\|_{H}\leqslant 2\k d^{-1},\quad
\|\overline{u}\|_{H}=1,
\end{equation*}
and $\overline{u}$ is a linear combination of the eigenvectors of
the operator $\mathcal{A}$ corresponding to the eigenvalues of
$\mathcal{A}$ from the segment $[M-d,M+d]$.
\end{lem}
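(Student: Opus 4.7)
The plan is based on the spectral theorem for compact self-adjoint operators. Let $\{e_j\}$ be an orthonormal basis of $H$ consisting of eigenvectors of $\mathcal{A}$, with real eigenvalues $M_j$ (enumerated with multiplicity and including any zero eigenvalues in case $\ker\mathcal{A}\neq\{0\}$, so that $\{e_j\}$ really is a basis of $H$). Writing $u=\sum_j c_j e_j$ and using $\|u\|_H=1$, one has $\sum_j|c_j|^2=1$ and by Parseval
\begin{equation*}
\|\mathcal{A}u-Mu\|_H^2=\sum_j(M_j-M)^2|c_j|^2\leqslant\k^2.
\end{equation*}
This single identity drives both halves of the lemma.

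For the first assertion, I would argue by contradiction: if $|M_j-M|>\k$ held for every index $j$, then
\begin{equation*}
\sum_j(M_j-M)^2|c_j|^2>\k^2\sum_j|c_j|^2=\k^2,
\end{equation*}
contradicting the displayed bound; hence some eigenvalue $M_i$ satisfies $|M_i-M|\leqslant\k$. For the second assertion, split $u$ according to the spectral projection onto the window $[M-d,M+d]$ by setting
\begin{equation*}
E:=\sum_{|M_j-M|\leqslant d}c_j e_j,\qquad F:=\sum_{|M_j-M|>d}c_j e_j,\qquad u=E+F,
\end{equation*}
which are mutually orthogonal. Restricting the Parseval bound to the terms in $F$ gives $d^2\|F\|_H^2\leqslant\k^2$, so $\|F\|_H\leqslant\k/d<1$ by the hypothesis $d>\k$, whence $\|E\|_H\geqslant\sqrt{1-\k^2/d^2}>0$. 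Define $\overline{u}:=E/\|E\|_H$; it has unit norm and is a linear combination of eigenvectors corresponding to eigenvalues in $[M-d,M+d]$. A direct computation using $\langle u,E\rangle_H=\|E\|_H^2$ yields $\|u-\overline{u}\|_H^2=2(1-\|E\|_H)$, and combining this with the elementary inequality $1-\sqrt{1-t}\leqslant t$ for $t\in[0,1]$ gives $\|u-\overline{u}\|_H\leqslant\sqrt{2}\,\k/d\leqslant 2\k/d$.

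I do not anticipate a substantive obstacle: the proof reduces to the spectral theorem for compact self-adjoint operators on a Hilbert space plus an elementary Parseval computation. The only points requiring a moment of care are the completeness of the eigenvector system on the whole of $H$ (handled by explicitly including a basis of $\ker\mathcal{A}$ among the $e_j$'s if needed) and the final scalar inequality converting $1-\|E\|_H$ into a $\k^2/d^2$ bound.
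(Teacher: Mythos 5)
Your proof is correct. Note that the paper itself gives no proof of this lemma: it is quoted verbatim (typos included --- the hypothesis should read $\k=const>0$ and the conclusion $|M_i-M|\leqslant\k$, which you have silently and correctly repaired) from Vishik--Lyusternik and from Oleinik--Shamaev--Yosifian, so there is no in-paper argument to compare against. Your spectral-decomposition argument is the standard one underlying those references and is complete: the contradiction step for the first claim is sound because $\sum_j|c_j|^2=1$ forces at least one nonzero $c_j$, and the tail estimate $d^2\|F\|_H^2\leqslant\k^2$ together with $\|u-\overline{u}\|_H^2=2(1-\|E\|_H)\leqslant 2\k^2/d^2$ gives the stated bound with room to spare ($\sqrt{2}\,\k/d$ versus $2\k/d$). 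The one point worth keeping explicit, as you do, is that the eigenvectors of a compact self-adjoint operator span all of $H$ only after adjoining an orthonormal basis of $\ker\mathcal{A}$; when $\ker\mathcal{A}=\{0\}$ the nonzero-eigenvalue eigenvectors already form a basis, so Parseval applies in either case.
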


Since the operator $\Hpe(0)$ is non-negative and self-adjoint in
$L_2(\Om_\e)$ and satisfies (\ref{3.1}), the inverse
$\mathcal{A}:=\Hpe^{-1}(0)$ exists, is bounded and self-adjoint, and
satisfies the estimate
\begin{equation}\label{5.46}
\|\mathcal{A}\|\leqslant 4.
\end{equation}
The operator $\mathcal{A}$ is also bounded as that from
$L_2(\Om_\e)$ into $\H^1(\Om_\e)$ and in view of the compact
embedding of $\H^1(\Om_\e)$ in $L_2(\Om_\e)$ the operator
$\mathcal{A}$ is compact in $L_2(\Om_\e)$.

We rewrite the equation (\ref{4.75}) as follows,
\begin{equation*}
\L^{-1}(\e,\mu)\Po_\e=\mathcal{A}\Po_\e+\widetilde{h}_\e,\quad
\widetilde{h}_\e:=\L^{-1}(\e,\mu)\mathcal{A}h_\e.
\end{equation*}
By (\ref{4.50}), (\ref{1.16}), (\ref{5.46}), (\ref{4.76}) the
function $\widetilde{h}_\e$ satisfies the estimate
\begin{equation*}
\|\widetilde{h}_\e\|_{L_2(\Om_\e)}=\Odr(\mu\E^{-2\e^{-1}}+\e\eta^{1/2}).
\end{equation*}
Hence, by (\ref{5.31a})
\begin{equation*}
\|\widetilde{h}_\e\|_{L_2(\Om_\e)} \|\Po_\e\|_{L_2(\Om_\e)}^{-1}
=\Odr(\mu\e^{-1/2}\E^{-2\e^{-1}}+\e^{1/2}\eta^{1/2}).
\end{equation*}
Taking this estimate into account, we apply Lemma~\ref{lm4.6} with
\begin{equation}\label{5.53}
\begin{aligned}
&H=L_2(\Om_\e), && u=\frac{\Po_\e}{\|\Po_\e\|_{L_2(\Om_\e)}},
\\
&M=\L^{-1}(\e,\mu), &&
\k=\|\widetilde{h}_\e\|_{L_2(\Om_\e)}\|\Po_\e\|_{L_2(\Om_\e)}^{-1},
\end{aligned}
\end{equation}
and conclude that there exists an eigenvalue $\widetilde{M}(\e,\mu)$
of $\mathcal{A}$ satisfying the estimate
\begin{equation*}
|\widetilde{M}(\e,\mu)-\L^{-1}(\e,\mu)|=
\Odr(\mu\e^{-1/2}\E^{-2\e^{-1}}+\e^{1/2}\eta^{1/2}).
\end{equation*}
Thus, by (\ref{4.50}), (\ref{1.16})
\begin{align}
&|\widetilde{M}(\e,\mu)|\geqslant |\L^{-1}(\e,\mu)|-
\Odr(\mu\e^{-1/2}\E^{-2\e^{-1}}+\e^{1/2}\eta^{1/2})\geqslant 3,\quad
|\widetilde{M}^{-1}(\e,\mu)|\leqslant \frac{1}{3},\nonumber
\\
&
\begin{aligned}
|\widetilde{M}^{-1}(\e,\mu)-\L(\e,\mu)|=&
\Odr\big((\mu\e^{-1/2}\E^{-2\e^{-1}}+\e^{1/2}\eta^{1/2})
|\L(\e,\mu)||\widetilde{M}^{-1}(\e,\mu)|\big)
\\
=&\Odr(\mu\e^{-1/2}\E^{-2\e^{-1}}+\e^{1/2}\eta^{1/2}).
\end{aligned}\label{5.51}
\end{align}
The number $\widetilde{M}^{-1}(\e,\mu)$ is an eigenvalue of
$\Hpe(0)$. Due to (\ref{1.15}), (\ref{1.16}) there exists exactly
one eigenvalue of this operator satisfying (\ref{5.51}), and this
eigenvalue is $\l_1(0,\e)$. Thus,
\begin{equation}\label{5.52}
|\l_1(0,\e)-\L(\e,\mu)|=
\Odr(\mu\e^{-1/2}\E^{-2\e^{-1}}+\e^{1/2}\eta^{1/2})
\end{equation}
that proves (\ref{1.19}).

The asymptotics (\ref{1.15}), (\ref{1.16}), (\ref{4.50}),
(\ref{1.19}) imply that for $\e$ small enough the segment
$[\L(\e,\mu)-1,\L(\e,\mu)+1]$ contains exactly one eigenvalue of
$\Hpe$, which is $\l_1(0,\e)$. Bearing in mind this fact and
(\ref{4.76}), we apply Lemma~\ref{lm4.6} with $d=1$ and other
quantities given by (\ref{5.53}) and conclude that the normalized in
$L_2(\Om_\e)$ eigenfunction $\pho(x,\e)$ associated with
$\l_1(0,\e)$ satisfies the estimate
\begin{equation*}
\left\|\frac{\Po_\e}{\|\Po_\e\|_{L_2(\Om_\e)}}-\pho(\cdot,\e)
\right\|_{L_2(\Om_\e)}\leqslant
\frac{2\|h_\e\|_{L_2(\Om_\e)}}{\|\Po_\e\|_{L_2(\Om_\e)}}\leqslant
\frac{C\big(\mu\E^{-2\e^{-1}}+\e\eta^{1/2}\big)}{\|\Po_\e\|_{L_2(\Om_\e)}},
\end{equation*}
where the constant $C$ is independent of $\e$, $\mu$, and $\eta$.
Hence, for the eigenfunction
$\po(x,\e):=\|\Po_\e\|_{L_2(\Om_\e)}\pho(x,\e)$ associated with
$\l_1(0,\e)$ we have
\begin{equation}\label{5.54}
\|\po(\cdot,\e)-\Po_\e\|_{L_2(\Om_\e)}=\Odr
\big(\mu\E^{-2\e^{-1}}+\e\eta^{1/2}\big).
\end{equation}

Denote $\Pho_\e(x):=\Po_\e(x)-\po(x,\e)$. The equations (\ref{4.75})
and the eigenvalue equation for $\po(x,\e)$ imply the equation for
$\Pho_\e$,
\begin{equation*}
\Hpe(0)\Pho_\e=\l_1(0,\e)\Pho_\e+\big(\l_1(0,\e)-\L(\e,\mu)\big)
\Po_\e.
\end{equation*}
Hence, we can write the integral identity
\begin{equation*}
\|\nabla\Pho_\e\|_{L_2(\Om_\e)}^2=\l_1(0,\e)\|\Pho_\e\|_{L_2(\Om_\e)}^2+
\big(\l_1(0,\e)-\L(\e,\mu)\big)(\Po_\e,\Pho_\e)_{L_2(\Om_\e)}.
\end{equation*}
Thus, by (\ref{5.54}), (\ref{5.52}), (\ref{5.31a}), (\ref{1.19}),
(\ref{4.50}), (\ref{1.16})
\begin{align*}
\|\nabla\Pho_\e\|_{L_2(\Om_\e)}^2\leqslant &
\l_1(0,\e)\|\Pho_\e\|_{L_2(\Om_\e)}^2+
\big(\l_1(0,\e)-\L(\e,\mu)\big)(\Po_\e,\Pho_\e)_{L_2(\Om_\e)}
\\
\leqslant & \|\Pho_\e\|_{L_2(\Om_\e)}^2 +
|\l_1(0,\e)-\L(\e,\mu)|\|\Po_\e\|_{L_2(\Om_\e)}\|\Pho_\e\|_{L_2(\Om_\e)}
\\
\leqslant & C\big( \mu^2\E^{-4\e^{-1}}+\e^2\eta\big).
\end{align*}
The last estimate and (\ref{5.54}) prove the asymptotics
(\ref{1.25}). Theorem~\ref{th1.5} is proved.

\end{document}